





\documentclass[sn-mathphys]{sn-jnl}



\usepackage{amsmath}
\usepackage{amssymb}
\usepackage{amsthm}
\usepackage{color}
\usepackage{float}
\usepackage{geometry}


\theoremstyle{thmstyleone}%

\theoremstyle{plain}\newtheorem{theorem}{Theorem}[section]
\theoremstyle{definition}\newtheorem{Defi}[theorem]{Definition}\theoremstyle{plain}\newtheorem{lemma}[theorem]{Lemma}
\newtheorem{coro}[theorem]{Corollary}
\newtheorem{proposition}{Proposition}[section]

\usepackage{epstopdf}
\usepackage{subfigure}

\raggedbottom

\begin{document}

\title[Convergence of Hamiltonian Particle methods]{Convergence of Hamiltonian Particle methods\\ for Vlasov--Poisson equations with a nonhomogeneous magnetic field}


\author[1]{\fnm{Anjiao} \sur{Gu}}\email{gaj2023@sjtu.edu.cn}

\author*[2,3]{\fnm{Yajuan} \sur{Sun}}\email{sunyj@lsec.cc.ac.cn}


\affil[1]{\orgdiv{School of Mathematical Sciences}, \orgname{ Shanghai Jiao Tong University}, \city{Shanghai}, \postcode{200240}, \country{China}}

\affil[2]{\orgdiv{LSEC, ICMSEC, Academy of Mathematics and Systems Science}, \orgname{Chinese Academy of Sciences}, \city{Beijing}, \postcode{100190}, \country{China}}

\affil[3]{\orgdiv{School of Mathematical Sciences}, \orgname{University of Chinese Academy of Sciences}, \city{Beijing}, \postcode{100049}, \country{China}}



\abstract{
In high-temperature plasma physics, a strong magnetic field is usually used to confine charged particles. 
Therefore, for studying the classical mathematical models of the
physical problems it is needed to consider the effect of external magnetic fields. 
One of the important model equations in plasma is the Vlasov--Poisson equation with an external magnetic field.
In this paper, we study the error analysis of Hamiltonian particle methods for this kind of system. 
The convergence of particle method for Vlasov equation and that of Hamiltonian method for particle equation are provided independently. 
By combining them, it can be concluded that the numerical solutions converge to the exact particle trajectories.
}

\keywords{magnetized Vlasov--Poisson equation, maximal ordering scaling, particle method, Hamiltonian method}



\maketitle

\section{Introduction}\label{sec1}

Particle methods have become a significant numerical simulation tool in plasma physics recently.
In these methods, the solutions are sought as macro particles with the weighted Klimontovich representation. The trajectories of macro particles are obtained by solving the characteristic equation.
They can reproduce well the realistic physical
phenomena~\cite{Hockney1988,Birdsall1991} and have a relatively small computational cost to conduct high dimensional simulations~\cite{Nicolas2009,LYZ2019}.
Due to these factors, they have been widely used for approximating solutions of PDEs such as Vlasov type in plasma physics.
In the framework of geometric integral, the Vlasov--Maxwell system can be written as a Hamiltonian system with respect to the Morrison-Marsden-Weinstein (MMW) Poisson bracket~\cite{Morrison1980T,Marsden1982T}.
The Vlasov--Poisson system with a nonhomogeneous magnetic field also has the Poisson bracket structure and has very closed relation to the MMW Poisson bracket~\cite{GAJ2022Hamiltonian}.
This equation is important for simulating the dynamics of tokamak plasmas.
For this type of equation, the magnetic field is applied to confine a plasma constituted of a large number of charged particles.
According to the Poisson bracket of the magnetized Vlasov--Poisson equations, we design the numerical algorithms by following the idea called geometric numerical integrators which can conserve the intrinsic properties of the original system~\cite{FK1985,Hairer2006,FK2010}.
Certainly, structure-preserving particle methods have been widely developed for Vlasov equations~\cite{Squire2012G,XJY2015E,Qin2016C,HY2016,Kraus2017,LYZ2019,Martin2021Variational}.

There are already some works for the convergence analysis of particle methods on unmagnetized Vlasov--Poisson system.
For one-dimensional Vlasov--Poisson systems, research~\cite{Cottet1984Particle} gave a mathematical analysis that showed the convergence of the Hamiltonian trajectories of the mollified equations which means it is mollified by convolving with a smooth function to those of the unmollified problem.
This results can be generally extended to the multidimensional cases~\cite{Victory1989On,Victory1991TheS,Victory1991TheF}.
On the other hand, there are already some particle methods constructed for Vlasov--Poisson system with a strong magnetic field~\cite{Filbet2016,Chartier2020}.
However, error analysis of them is lacking.
Thus, an appropriate tool for the error estimation is urgent.
In \cite{GAJ2022Hamiltonian}, we have developed the Hamiltonian Particle-in-Cell methods for magnetized Vlasov--Poisson system. Our aim in this paper is to build a complete error analysis of it.

We study the errors incurred for the fully discretized systems of the magnetized Vlasov--Poisson equation under the maximal ordering scaling.
For the convergence analysis of particle methods, we compare the electric field between the mollified equation and the original one.
On the other hand, we applied the averaging technique for splitting scheme studied in~\cite{ZXF2021Error} to get an uniform error bound for our Poisson bracket preserving splitting method.
Moreover, the linear part of the splitting method is exact so that the result is only depend on the time step.
Eventually, we can come to a conclusion from that the numerical solutions of Hamiltonian Particle methods converge to the original system.

The outline of the paper is as follows. In section 2, we introduce the the Vlasov equation with an external magnetic field.
For particle method, in section 3 we present the description and convergence analysis of it.
Furthermore, we analyze the Hamiltonian Particle-in-Cell method in Section 4.
Error analysis of the fully discretization method is presented and a refined one of the parallel component of the velocity is discussed further.
The main part is the optimal convergence of temporal discretization owing to the spatial discretization is a standard conforming finite element method.
Finally, we conclude this paper.

\section{Vlasov--Poisson equations}

The Vlasov--Poisson system with an external magnetic field is given by
\begin{align}
&\frac{\partial f}{\partial t}+{v}\cdot\frac{\partial f}{\partial {x}}+(E+v\times B)\cdot\frac{\partial f}{\partial {v}}=0,\label{VP01}\\
&E=-\nabla\phi,\label{VP02}\\
&-\Delta\phi= {\rho(x,t)-\rho_0},\label{VP03}\\
&\rho=\int_{\mathbb{R}^3}f dv,\label{VP04}
\end{align}
where $x,v\in \mathbb{R}^3$ and $f(x,v,0)=f_0(x,v)$.
Here, $B$ is an external magnetic field, $\rho$ is charge density function and  $\rho_0$ is a constant which represents ion density. Also, the Vlasov equation (\ref{VP01}) can be written in the following divergence form
\begin{equation*}
\frac{\partial f}{\partial t}+\nabla_{x}\cdot(vf)+\nabla_{v}\cdot((E+v\times B)f)=0.
\end{equation*}
Equations (\ref{VP03})-(\ref{VP04}) can be solved via the fundamental solution, that is
\begin{equation}\label{phi}
\phi(x)=\frac{1}{4\pi}\int_{\mathbb{R}^{3}} \frac{1}{\lvert x-y \rvert}(\int_{\mathbb{R}^{3}}f(y,v,t)dv-\rho_0)dy
\end{equation}
Substituting (\ref{phi}) into (\ref{VP02}) gives
\begin{equation}\label{E0}
E=\int_{\mathbb{R}^{3}} K(x,y)(\int_{\mathbb{R}^{3}}f(y,v,t)dv-\rho_0)dy,
\end{equation}
where $K(x,y)=\frac{1}{4\pi}\frac{x-y}{\lvert x-y \rvert^3}$.

In~\cite{GAJ2022Hamiltonian}, we have presented the following Poisson bracket of the system
\begin{equation}
\begin{aligned}
\{\{\mathcal{F},\mathcal{G}\}\}  (f)
&=\int_{\mathbb{R}^3}\int_{\mathbb{R}^3} f\left\{ \frac{\delta\mathcal{F}}{\delta f},\frac{\delta\mathcal{G}}{\delta f}\right\}_{xv}dxdv\\
&+\int_{\mathbb{R}^3}\int_{\mathbb{R}^3} fB\cdot\left(\frac{\partial}{\partial v}\frac{\delta\mathcal{F}}{\delta f}\times\frac{\partial}{\partial v}\frac{\delta\mathcal{G}}{\delta f}\right)dxdv,
\end{aligned}
\label{eq:MMWB}
\end{equation}
where $\mathcal{F}$ and $\mathcal{G}$ are two functionals of $f$, $\frac{\delta \mathcal{F}}{\delta f}$ is the variational derivative. The operator $\left\{ \cdot,\cdot\right\} _{xv}$ is the canonical Poisson bracket which for two given functions $m(x,v)$ and  $n(x,v)$, is
\[
\{m,n\}_{xv}=\frac{\partial m}{\partial x}\cdot\frac{\partial n}{\partial v}-\frac{\partial m}{\partial v}\cdot\frac{\partial n}{\partial x}.
\]

With the bracket (\ref{eq:MMWB}), we consider the following Poisson system
\begin{equation}
\frac{d\mathcal{F}}{d t}=\{\{\mathcal{F},\mathcal{H}\}\},\label{eq:PoissonVM}
\end{equation}
where $\mathcal{F}$ is any functional of $f$, and $\mathcal{H}$ is the Hamiltonian functional which is also the global energy of the system,
\begin{equation}
\begin{aligned}
\mathcal{H}[f]
& =\frac{1}{2}\int_{\mathbb{R}^3}\int_{\mathbb{R}^3}v^{2}fdxdv+\frac{1}{2}\int_{\mathbb{R}^3}E^{2}dx\\
& =\frac{1}{2}\int_{\mathbb{R}^3}\int_{\mathbb{R}^3}v^{2}fdxdv+\frac{1}{2}\int_{\mathbb{R}^3}\int_{\mathbb{R}^3}\phi fdxdv-\frac{1}{2}\rho_0\int_{\mathbb{R}^3}\phi dx.\label{eq:HamiltonVM}
\end{aligned}
\end{equation}
In fact, by setting $\mathcal{F}[f]=\int_{\mathbb{R}^3}\int_{\mathbb{R}^3}f(\tilde{x},\tilde{v},t)\delta(x-\tilde{x})\delta(v-\tilde{v})d\tilde{x}d\tilde{v}$, and defining the local energy $h(x,v)=\frac{\delta\mathcal{H}}{\delta f}(x,v)={v^{2}}/2+\phi({x})$, it follows from (\ref{eq:PoissonVM}) that
\begin{align}\label{eq:poisson3}
\frac{\partial f}{\partial t}&=-\{f,h\}_{xv}-B\cdot\left(\frac{\partial f}{\partial v}\times \frac{\partial h}{\partial v}\right)
\end{align}
which recovers the Vlasov equation (\ref{VP01}).  This implies that
the magnetized Vlasov--Poisson equation can be written in a Poisson system.

\begin{Defi}
A function $f$ is called a weak solution of the Cauchy problem (\ref{VP01})-(\ref{VP04}) if
\begin{equation}\label{weakform}
\begin{aligned}
&\int_0^T\int_{\mathbb{R}^3}\int_{\mathbb{R}^3} f(x,v,t)(\psi_t(x,v,t)+v\cdot\nabla_x\psi(x,v,t)+(E+v\times B)\cdot\nabla_v\psi(x,v,t))dxdvdt\\
&+\int_{\mathbb{R}^3}\int_{\mathbb{R}^3} f_0\psi(x,v,0)dxdv=0
\end{aligned}
\end{equation}
holds for any test function $\psi\in C_0^{\infty}([0,T)\times\mathbb{R}^3\times\mathbb{R}^3)$.
\end{Defi}

In~\cite{Rege2021Propagation}, for the magnetized Vlasov--Poisson systems (\ref{VP01})-(\ref{VP04}) the existence and uniqueness of the solution has been proved. We list some known results as follows.
\begin{theorem}\label{VPexistence}
Assume that the magnetic field $B\in W^{1,\infty}(\mathbb{R}^{3})$, the initial function $f_0$ belongs to the space $L^1(\mathbb{R}^{3}\times\mathbb{R}^{3})\cap L^{\infty}(\mathbb{R}^{3}\times\mathbb{R}^{3})$ and satisfies
\begin{equation*}
\begin{aligned}
\int_{\mathbb{R}^3}\int_{\mathbb{R}^3}\lvert v \rvert^{\gamma}f_0 dxdv<+\infty
\end{aligned}
\end{equation*}
for some $\gamma>2$.
Then, there exists a weak solution $f$ of  system (\ref{VP01})-(\ref{VP04}) for a finite interval $[0,T]$.
\end{theorem}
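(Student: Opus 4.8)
The plan is to follow the classical regularization-and-compactness scheme for Vlasov--Poisson type systems (in the spirit of Arsenev), adapted to accommodate the external magnetic field. The starting observation is that the only genuine source of difficulty is the singular convolution kernel $K(x,y)=\frac{1}{4\pi}\frac{x-y}{|x-y|^3}$ appearing in (\ref{E0}); the transport term and the Lorentz term are harmless once the field is smooth. I would therefore first construct solutions of a regularized problem, derive bounds that are uniform in the regularization parameter, and only then pass to the limit.

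First I would mollify the kernel, replacing $K$ by $K_\epsilon=K*\zeta_\epsilon$ for a standard mollifier $\zeta_\epsilon$, so that the associated field $E_\epsilon=\int_{\mathbb{R}^3}K_\epsilon(x,y)(\rho_\epsilon(y,t)-\rho_0)\,dy$ is bounded and Lipschitz in $x$. For fixed $\epsilon$ the characteristic system $\dot X=V$, $\dot V=E_\epsilon+V\times B$ then has a globally defined flow by the Cauchy--Lipschitz theorem, using $B\in W^{1,\infty}$. Defining $f_\epsilon$ as the push-forward of $f_0$ along this flow yields a solution of the regularized Vlasov equation. A key structural point is that the advection field in $(x,v)$ is divergence free: $\nabla_x\cdot v=0$, $\nabla_v\cdot E_\epsilon=0$, and $\nabla_v\cdot(v\times B)=0$, so the flow is measure preserving and hence $\|f_\epsilon(t)\|_{L^p}=\|f_0\|_{L^p}$ for every $p\in[1,\infty]$; in particular the $L^1$ and $L^\infty$ norms are conserved.

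Next I would establish the a priori estimates uniform in $\epsilon$. Conservation of mass and of the $L^\infty$ norm is already in hand. Because the magnetic force is orthogonal to $v$ it does no work, so the energy identity is unaffected by $B$ and the kinetic energy $\int\int |v|^2 f_\epsilon\,dxdv$ together with the field energy $\int |E_\epsilon|^2\,dx$ remain bounded on $[0,T]$. Interpolating the uniform $L^\infty$ bound against the second velocity moment controls $\rho_\epsilon$ in $L^{5/3}(\mathbb{R}^3)$, which in turn bounds $E_\epsilon$. Finally the moment $\int\int |v|^\gamma f_\epsilon\,dxdv$ with $\gamma>2$ is propagated by a Gr\"onwall argument along characteristics, the growth being absorbed by the field bound; the term $v\times B$ contributes only a harmless $\|B\|_{L^\infty}$ factor here.

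The remaining and hardest step is to pass to the limit $\epsilon\to0$. The uniform bounds give $f_\epsilon\rightharpoonup f$ weakly-$*$ in $L^\infty$ and weakly in $L^1$, together with a weak limit for $\rho_\epsilon$, but weak convergence alone cannot handle the nonlinear product $E_\epsilon f_\epsilon$ in the weak formulation (\ref{weakform}). The main obstacle is therefore to upgrade this to strong convergence of $E_\epsilon$. I would resolve it with a velocity-averaging lemma: the uniform moment and $L^p$ bounds, combined with the equation satisfied by $f_\epsilon$, yield local compactness of the averages $\rho_\epsilon=\int f_\epsilon\,dv$, hence strong convergence of $\rho_\epsilon$ and of $E_\epsilon$ in $L^p_{\mathrm{loc}}$. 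The magnetic term is linear in $f_\epsilon$ with the fixed coefficient $B$, so once $vf_\epsilon$ is uniformly integrable (guaranteed by the $\gamma>2$ moment) its weak limit passes through directly. Combining the strong limit of $E_\epsilon$ with the weak limit of $f_\epsilon$ identifies the nonlinear term and shows that $f$ satisfies (\ref{weakform}), completing the existence proof.
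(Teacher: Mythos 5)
The paper itself contains no proof of this theorem: it is quoted verbatim as a known result from \cite{Rege2021Propagation}, so there is no internal argument to compare yours against. Judged on its own terms, your proposal is the standard Arsenev-type regularization-and-compactness scheme that the cited literature adapts to the magnetized setting, and your structural observations are the right ones: the advection field is divergence free in $(x,v)$ so all $L^p$ norms of $f_\epsilon$ are conserved; the Lorentz force does no work so the energy identity is unchanged by $B$; interpolation of the $L^\infty$ bound against the kinetic energy controls $\rho_\epsilon$ in $L^{5/3}$. (In fact the moment propagation is even easier than you claim: since $(v\times B)\cdot\nabla_v|v|^\gamma=\gamma|v|^{\gamma-2}\,v\cdot(v\times B)=0$, the magnetic term contributes exactly zero there, not merely a $\Vert B\Vert_{L^\infty}$ factor.)

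Three points need repair before this is a complete proof. First, the construction of $f_\epsilon$ for fixed $\epsilon$ is circular as written: $E_\epsilon$ is determined by $\rho_\epsilon$, which is determined by the flow, which is determined by $E_\epsilon$, so the Cauchy--Lipschitz theorem alone does not produce a solution of the regularized system. You need an iteration or fixed-point argument (the mollified kernel makes $\rho\mapsto E_\epsilon$ Lipschitz into $W^{1,\infty}$, so Picard iteration on the characteristic flow converges); this is standard but it is the actual construction step. Second, the velocity-averaging lemma you invoke is stated for equations of the form $\partial_t f+v\cdot\nabla_x f=\partial_v^{\alpha}g$ with $g$ bounded in some $L^p$, $p>1$; your source term contains $\nabla_v\cdot\bigl((v\times B)f_\epsilon\bigr)$, whose coefficient grows linearly in $v$, so you must either cut off in $v$ (using the uniform $\gamma>2$ moment to control the tails) or cite a version of averaging that tolerates such coefficients, and you must check that $E_\epsilon f_\epsilon$ lies in an $L^p$, $p>1$, locally (it does, via the $L^{5/3}$ bound on $\rho_\epsilon$ and $f_\epsilon\in L^\infty$). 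Third, a point both you and the paper gloss over: with the constant neutralizing background $\rho_0$ on all of $\mathbb{R}^3$, $\rho-\rho_0\notin L^1$, and neither the representation (\ref{E0}) nor the field energy $\int|E|^2dx$ is obviously finite as written; the reference \cite{Rege2021Propagation} formulates the system so that this difficulty does not arise, and any self-contained existence proof must either drop $\rho_0$ or make sense of these quantities before running the energy estimate.
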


To derive the uniqueness result of the solution, the extra conditions of $f_0$ is needed. It is shown in the following theorem.
\begin{theorem}\label{VPuniqueness}
Let $B\in W^{1,\infty}(\mathbb{R}^{3})$ and $f_0\in L^1(\mathbb{R}^{3}\times\mathbb{R}^{3})\cap L^{\infty}(\mathbb{R}^{3}\times\mathbb{R}^{3})$. Assume that
\begin{equation*}
\int_{\mathbb{R}^3}\int_{\mathbb{R}^3}\lvert v \rvert^{6}f_0 dxdv<+\infty\quad and\quad \int_{\mathbb{R}^3}\int_{\mathbb{R}^3}\lvert x \rvert^{4}f_0 dxdv<+\infty
\end{equation*}
and the density $\rho$ satisfies
\begin{equation*}
\Vert\rho\Vert_{L^{\infty}([0,T]\times\mathbb{R}^{3})}<+\infty,
\end{equation*}
then the weak solution $f$ of system (\ref{VP01})-(\ref{VP04}) is unique.
\end{theorem}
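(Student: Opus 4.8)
The plan is to adapt Loeper's optimal-transport stability argument to the magnetized setting, so that the hypotheses of the theorem (a bounded density and finite high-order moments) feed directly into the estimate. Suppose $f_1$ and $f_2$ are two weak solutions of (\ref{VP01})--(\ref{VP04}) sharing the initial datum $f_0$, each with bounded density $\rho_1,\rho_2$. Since $f_i$ is transported along its own characteristic flow $Z_i(t,z)=(X_i(t,z),V_i(t,z))$ solving $\dot X_i=V_i$, $\dot V_i=E_i(X_i)+V_i\times B(X_i)$ with $Z_i(0,z)=z$, one has $f_i(t)=Z_i(t,\cdot)_{\#}f_0$. The quantity to control is the quadratic deviation between the two flows issued from the same data,
\begin{equation*}
Q(t)=\int_{\mathbb{R}^3\times\mathbb{R}^3}\big(|X_1(t,z)-X_2(t,z)|^2+|V_1(t,z)-V_2(t,z)|^2\big)f_0(z)\,dz,
\end{equation*}
and the goal is to show $Q\equiv 0$, which forces $Z_1=Z_2$ for $f_0$-a.e. $z$ and hence $f_1=f_2$.

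First I would differentiate $Q$. The position contribution $2\int (X_1-X_2)\cdot(V_1-V_2)f_0$ is bounded by $Q$ via Young's inequality. In the velocity equation the magnetic term splits as
\begin{equation*}
V_1\times B(X_1)-V_2\times B(X_2)=(V_1-V_2)\times B(X_1)+V_2\times\big(B(X_1)-B(X_2)\big),
\end{equation*}
and since $B\in W^{1,\infty}(\mathbb{R}^3)$ the two pieces are controlled by $\|B\|_{L^\infty}|V_1-V_2|$ and $\|\nabla B\|_{L^\infty}|V_2|\,|X_1-X_2|$. The factor $|V_2|$ is integrated against $f_0$ and absorbed by a Cauchy--Schwarz step together with the propagated velocity moments guaranteed by $\int|v|^6f_0<\infty$, while $\int|x|^4f_0<\infty$ keeps the flow globally well defined; these terms therefore only produce a contribution $\lesssim Q$.

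The decisive term is the electric force. I would write $E_1(X_1)-E_2(X_2)=\big(E_1(X_1)-E_1(X_2)\big)+\big(E_1(X_2)-E_2(X_2)\big)$. For the first bracket one uses that a field generated by an $L^\infty$ density is log-Lipschitz, $|E_1(x)-E_1(y)|\le C\|\rho_1\|_{L^\infty}|x-y|\big(1+\log^+\tfrac{1}{|x-y|}\big)$, producing after integration a contribution $\lesssim Q\,(1+\log^+\tfrac{1}{Q})$. For the second bracket, $\int|E_1(X_2)-E_2(X_2)|^2f_0\,dz=\int|E_1-E_2|^2\rho_2\,dx\le\|\rho_2\|_{L^\infty}\|E_1-E_2\|_{L^2}^2$, and here the main obstacle appears: the convolution kernel $K$ in (\ref{E0}) is only log-Lipschitz and does not map $L^\infty$ densities to Lipschitz fields, so one cannot naively bound $\|E_1-E_2\|_{L^\infty}$ by $\|\rho_1-\rho_2\|_{L^\infty}$. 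The way around it is Loeper's inequality $\|E_1-E_2\|_{L^2}\le C\max_i\|\rho_i\|_{L^\infty}^{1/2}\,W_2(\rho_1,\rho_2)$, which converts the singular field difference into the $2$-Wasserstein distance; combined with $W_2(\rho_1,\rho_2)^2\le Q$ (both densities being push-forwards of $f_0$ under the spatial flows) this yields a contribution $\lesssim Q$. This is precisely the step where the assumption $\|\rho\|_{L^\infty([0,T]\times\mathbb{R}^3)}<+\infty$ is indispensable.

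Collecting all contributions gives an Osgood-type differential inequality $\tfrac{dQ}{dt}\le C\,Q\big(1+\log^+\tfrac{1}{Q}\big)$, whose modulus satisfies $\int_{0^+}\tfrac{ds}{s(1+\log(1/s))}=+\infty$. Since $Q(0)=0$, Osgood's lemma forces $Q(t)=0$ on $[0,T]$, so the two characteristic flows coincide and $f_1=f_2$, establishing uniqueness. I expect the genuinely delicate part to be the log-Lipschitz bookkeeping: one must track the singular kernel carefully enough to produce an Osgood (rather than merely Gronwall) modulus, and verify that the magnetic cross terms and the moment-weighted factors $|V_2|$ do not destroy this borderline integrability.
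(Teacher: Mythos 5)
The first thing to say is that the paper itself contains no proof of Theorem~\ref{VPuniqueness}: it is imported verbatim as a known result from \cite{Rege2021Propagation} (``We list some known results as follows''), so there is no internal argument to compare yours against; the right benchmark is the cited work, which does prove uniqueness by precisely the strategy you chose --- Loeper's optimal-transport functional $Q$, the splitting $E_1(X_1)-E_2(X_2)=[E_1(X_1)-E_1(X_2)]+[E_1(X_2)-E_2(X_2)]$, the inequality $\Vert E_1-E_2\Vert_{L^2}\le C\max_i\Vert\rho_i\Vert_{L^\infty}^{1/2}W_2(\rho_1,\rho_2)$ together with $W_2(\rho_1,\rho_2)^2\le Q$, the log-Lipschitz modulus and Osgood's lemma. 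The purely electrostatic part of your argument is sound in outline (you should, however, make explicit the Jensen/concavity step that converts $\int f_0\lvert X_1-X_2\rvert^2\bigl(1+\log^+\frac{1}{\lvert X_1-X_2\rvert}\bigr)^2\,dz$ into $Q\bigl(1+\log^+\frac{1}{Q}\bigr)^2$ before the square root is taken; that is exactly where one power of the logarithm is saved and the Osgood modulus survives).

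The genuine gap is in the one term that is actually new in the magnetized setting: the cross term $\int f_0\,(V_1-V_2)\cdot\bigl[V_2\times(B(X_1)-B(X_2))\bigr]\,dz$ (the other magnetic piece is harmless since $(V_1-V_2)\times B(X_1)\cdot(V_1-V_2)=0$). You assert it is ``$\lesssim Q$'' after ``a Cauchy--Schwarz step together with the propagated velocity moments''. That step fails as written: Cauchy--Schwarz gives
\begin{equation*}
\Vert\nabla B\Vert_{L^\infty}\left(\int f_0\,\lvert V_2\rvert^2\,\lvert X_1-X_2\rvert^2\,dz\right)^{1/2} Q^{1/2},
\end{equation*}
and the weighted factor is not controlled by $Q$, because $\lvert V_2(t,z)\rvert$ is unbounded on the support of $f_0$. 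The natural repairs do not close. The pointwise bound $\lvert V_2(t,z)\rvert\le\lvert v\rvert+CT$ (which is true: the Lorentz force does no work, and the field is uniformly bounded under the bounded-density hypothesis) still leaves the weight $\lvert v\rvert$ inside the integral. Truncating at $\lvert v\rvert= R$, using $\int\lvert v\rvert^6 f_0\,dxdv<\infty$ for the tail and optimizing in $R$ yields a bound of the form $C\,Q^{3/4}M_6^{1/4}$; but $\dot Q\lesssim Q^{3/4}$ is \emph{not} an Osgood inequality --- it admits nonzero solutions with $Q(0)=0$ --- so uniqueness does not follow. Replacing $Q$ by a velocity-weighted functional merely pushes the weight into the Loeper term, where one then needs $L^\infty_x$ (not just integrated) control of moment densities such as $\int\lvert v\rvert^2 f_2\,dv$, which the stated hypotheses do not provide for free; this is exactly where the cited work's propagation-of-moments machinery (and hence the $\lvert v\rvert^6$, $\lvert x\rvert^4$ assumptions) has to enter, and it is absent from your proposal. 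Only in the special case of a uniform field, where $B(X_1)=B(X_2)$ makes the term vanish identically, does your proof close as written; for the nonhomogeneous $B\in W^{1,\infty}(\mathbb{R}^3)$ that the theorem actually allows, the key ingredient is missing.
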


In other sides, the equation (\ref{VP01}) can be considered as a hyperbolic system, where with $B$ is given, and $E$ satisfies the equations (\ref{VP02})-(\ref{VP04}). For the initial value $f(x,v,t=0)=f_0(x,v)$, under some smoothness assumptions for the solution $f$ we can use the characteristic line approach to solve the solution of system  (\ref{VP01}).
Consider the following characteristic equation
\begin{equation}\label{VP1}
\begin{aligned}
&\frac{dX}{dt}=V,\\
&\frac{dV}{dt}=E(X)+V\times B(X),\\
&X(0)=x,V(0)=v.
\end{aligned}
\end{equation}
We denote $(X(t;x,v,0),V(t;x,v,0))$ the solution of (\ref{VP1}).
Then the solution of the system can be expressed as
\begin{equation}\label{sol1}
f(x,v,t)=f_0(X(0;x,v,t),V(0;x,v,t)).
\end{equation}
Moreover, the flow is volume-preserving so that
\begin{equation}\label{sol2}
\frac{\partial(X,V)}{\partial(x,v)}=1.
\end{equation}

\section{Particle method of Vlasov--Poisson equation}
In numerical framework, the idea of Particle-in-Cell approach is to use two kinds of grids: the Euler grid and the Lagrange grid. Because of that the distribution function is represented by 'super particles' in the form of the weighted Klimontovich expression, particle method is to approximate and to follow the trajectories of super particles. This numerical computation can simulate well the realistic physical phenomena, and has been considered as an effective way to simulate plasmas in the kinetic theory.
In this section, we study the error analysis in the framework of the Particle method for magnetized Vlasov--Poisson equation. Herein, the Poisson equation is solved by using a mollified charge density at each time step. This implies that the approximate field quantities which is denoted by $E^h$ can be obtained by using a mollified Poisson kernel $K_r$ convolved with the density $\rho$.
\subsection{Description of the method}
We start this section from considering the following approximation equation defined on $\Omega\subseteq\mathbb{R}^{6}$
\begin{equation}\label{VPapp}
\frac{\partial f^h}{\partial t}+{v}\cdot\frac{\partial f^h}{\partial {x}}+(E^h+v\times B)\cdot\frac{\partial f^h}{\partial {v}}=0,\quad f^h(t=0)=f_0^h,
\end{equation}
where $E^h$ is an approximation of the electrostatic field $E$.

As follows, we seek the solution that can be expressed in a Dirac sum $$f^h(x,v,t)=\sum\limits_{j=1}^N\alpha_j\delta(x-X_j^h(t))\delta(v-V_j^h(t)),$$ where $X_j^h,V_j^h\in {\mathbb{R}^3},j=1\cdots N$ which obey some ODEs shown later.
We divide $\Omega$ to $N$ nonoverlapping subdomains $\Omega_j$ such that the initial position of $j$th particle $(X_j^h(0),V_j^h(0))$ is located at the center of $\Omega_j$. Then the initial discrete distribution function can be defined by 
\begin{equation}\label{f0htop}
f_0^h=\sum\limits_{j=1}^{N}\alpha_j\delta(x-X_j^h(0))\delta(v-V_j^h(0)).
\end{equation}
The weights $\alpha_j$ are chosen such that for any function
$\psi\in C_0^{\infty}([0,T)\times\mathbb{R}^3\times\mathbb{R}^3)$ it has $$\sum\limits_{j=1}^N\alpha_j\psi(X_j^h(0),V_j^h(0))=(f_0^h,\psi),$$ where $(f_0^h,\psi)$ is the approximation of $\int_{\Omega} f_0\psi dxdv.$
If the weight is considered as a function, it also needs to satisfy the corresponding time equation which has been discussed in~\cite{Chertock2012Convergence}. 
In particular, if we choose
\begin{equation}\label{alpha0top}
\alpha_j=f_0(X_j^h(0),V_j^h(0))\vert\Omega_j\vert
\end{equation}
as the weight of $j$th particle, where $\vert\Omega_j\vert$ is the measure of the interval $\Omega_j$.
Notice that in this case the weight of each particle is constant, i.e., the physical quantity it carries will not change.
On the other hand, we are discussing this from the point of view of a fixed particle generation mesh, and the particle method can also be understood from the point of view of the Monte Carlo approach.
In this case we need to sample the initial macro particles according to the initial distribution $f_0$.
The corresponding error analysis requires the use of statistical tools such as kernel density estimation~\cite{Monaghan1982Why,Qin2018Reducing,Evstatiev2021Noise}.

We first give convergence result for the initialized distribution function.
\begin{proposition}
Assuming that there is a partition $\bigcup_{j=1}^{N}\Omega_j=\Omega$ of the interval $\Omega$ and noting that the maximum diameter of the partitioned intervals is $h=\max\limits_j {diam \Omega_j}$, and that the discrete distributions $f_0^h$ and the weights $\alpha_j$ take the form of the expressions (\ref{f0htop}) and (\ref{alpha0top}), respectively, then the function $f_0^h$ weakly converges to the initial distribution function $f_0$ when $h$ tends to zero.
\end{proposition}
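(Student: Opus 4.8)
The statement asserts that the empirical measure $f_0^h = \sum_j \alpha_j \delta(x-X_j^h(0))\delta(v-V_j^h(0))$ converges weakly to $f_0$ as the mesh size $h \to 0$. By definition of weak convergence of measures, I need to show that for every test function $\psi \in C_0^\infty$ (it suffices to take $\psi$ bounded and continuous, or $C_0^\infty$ as the definition in the excerpt uses), the pairing $(f_0^h, \psi) = \sum_{j=1}^N \alpha_j \psi(X_j^h(0), V_j^h(0))$ converges to $(f_0, \psi) = \int_\Omega f_0(x,v)\,\psi(x,v)\,dx\,dv$. Substituting the explicit weights $\alpha_j = f_0(X_j^h(0), V_j^h(0))\,|\Omega_j|$, the discrete pairing becomes
\begin{equation*}
(f_0^h, \psi) = \sum_{j=1}^N f_0(X_j^h(0), V_j^h(0))\,\psi(X_j^h(0), V_j^h(0))\,|\Omega_j|,
\end{equation*}
which is precisely a Riemann sum for the integral $\int_\Omega f_0(x,v)\,\psi(x,v)\,dx\,dv$, with the integrand sampled at the cell centers $(X_j^h(0), V_j^h(0))$ and weighted by the cell volumes $|\Omega_j|$.

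\textbf{Key steps.} First I would fix $\psi \in C_0^\infty$ and set $g(x,v) := f_0(x,v)\,\psi(x,v)$. Because $\psi$ has compact support and $f_0 \in L^\infty$, the product $g$ is bounded and supported in a fixed compact set, so only finitely many cells $\Omega_j$ contribute. Next I would estimate the error between the Riemann sum and the integral cell by cell: on each $\Omega_j$,
\begin{equation*}
\left| \int_{\Omega_j} g(x,v)\,dx\,dv - g(X_j^h(0), V_j^h(0))\,|\Omega_j| \right| \le |\Omega_j| \sup_{(x,v)\in\Omega_j} \left| g(x,v) - g(X_j^h(0), V_j^h(0)) \right|.
\end{equation*}
Summing over $j$ and using $\operatorname{diam}\Omega_j \le h$ together with the modulus of continuity $\omega_g$ of $g$, the total error is bounded by $\Big(\sum_j |\Omega_j|\Big)\,\omega_g(h) = |\Omega|\,\omega_g(h)$ (restricted to the finitely many relevant cells, so $|\Omega|$ is replaced by the measure of a compact neighborhood of $\operatorname{supp}\psi$). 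Since $g$ is continuous and compactly supported, hence uniformly continuous, $\omega_g(h) \to 0$ as $h \to 0$, and the claim follows.

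\textbf{The main obstacle.} The genuinely delicate point is the regularity available for $g = f_0\,\psi$. The hypotheses state only $f_0 \in L^1 \cap L^\infty$ with a finite velocity moment, so $f_0$ need not be continuous, and a pointwise Riemann-sum argument with a modulus of continuity is not literally justified. The honest route is to invoke that continuous compactly supported functions are dense in $L^1$: approximate $g$ in $L^1$ by a continuous $\tilde g$, control the integral error $\|g - \tilde g\|_{L^1}$ directly, and apply the uniform-continuity Riemann-sum argument to $\tilde g$ alone. The cross terms involving $g - \tilde g$ in the discrete sum must then be controlled, which requires that the discrete measure $\sum_j |\Omega_j|\,\delta_{(X_j^h(0),V_j^h(0))}$ approximates Lebesgue measure in a way compatible with $L^1$ data; this is where an equidistribution or bounded-overlap assumption on the partition (cell centers, comparable cell sizes) does the work. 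Alternatively, one may simply state the result under the implicit smoothness assumption on $f_0$ used elsewhere in the paper, in which case the clean Riemann-sum estimate above is complete. I would present the Riemann-sum estimate as the core of the argument and flag the $L^1$-density refinement as the step needing the partition regularity.
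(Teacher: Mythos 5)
Your proposal takes essentially the same route as the paper: the paper's proof also rewrites $\sum_j \alpha_j \psi(X_j^h(0),V_j^h(0))$ as a midpoint Riemann sum for $\int_\Omega f_0\psi\,dx\,dv$ and invokes the midpoint quadrature error estimate cell by cell, exactly as in your core argument. Your additional remark about the regularity of $f_0$ is a fair observation — the paper's one-line appeal to "midpoint quadrature error" implicitly assumes smoothness of $f_0$ (which is indeed imposed later, e.g.\ $f_0\in W^{s,1}\cap W^{s,\infty}$ in the convergence theorem) — but this does not change the fact that the underlying argument is the same.
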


\begin{proof}
For any test functions $\psi\in C_0^{\infty}(\mathbb{R}^{3}\times\mathbb{R}^{3})$,
\begin{align}
\notag&\left\lvert\int_{\Omega}f_0\psi dxdv-\int_{\Omega}f_0^h\psi dxdv\right\rvert\\
\notag=&\left\lvert\sum\limits_{j=1}^{N}\left(\int_{\Omega_j}f_0\psi dxdv-\alpha_j\psi(X_j^h(0),V_j^h(0))\right)\right\rvert\\
\label{f0con}=&\left\lvert\sum\limits_{j=1}^{N}\left(\int_{\Omega_j}f_0\psi dxdv-f_0(X_j^h(0),V_j^h(0))\psi(X_j^h(0),V_j^h(0))\vert\Omega_j\vert\right)\right\rvert.
\end{align}
The term (\ref{f0con}) can be estimated  by the midpoint quadrature error. This implies the term on the left side of the above equality tends to $0$ as $h\to 0$.
\end{proof}

We indicated earlier when giving Dirac sum approximation that these macro particles need to satisfy certain equations, and the following proposition that determines their equations of motion.
\begin{proposition}
The Cauchy problem (\ref{VPapp}) has a week solution in the form  $$f^h(x,v,t)=\sum\limits_{j=1}^N\alpha_j\delta(x-X_j^h(t))\delta(v-V_j^h(t))$$  with $(X_j^h(t), V_j^h(t))$ subjecting the following ODEs
\begin{equation}\label{VP2}
\begin{aligned}
&\frac{dX_j^h}{dt}=V_j^h,\\
&\frac{dV_j^h}{dt}=E^h(X^h_j)+V_j^h\times B(X^h_j).
\end{aligned}
\end{equation}
\end{proposition}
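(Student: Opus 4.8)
The plan is to verify directly that the Dirac-sum ansatz satisfies the weak formulation of (\ref{VPapp}), which is the exact analogue of (\ref{weakform}) with $E$ replaced by $E^h$ and $f_0$ by $f_0^h$. First I would substitute $f^h(x,v,t)=\sum_{j=1}^N\alpha_j\delta(x-X_j^h(t))\delta(v-V_j^h(t))$ into that weak form. Because each summand is a product of Dirac masses in $x$ and $v$, every phase-space integral collapses to an evaluation at the particle location, so the volume term becomes
\begin{equation*}
\int_0^T\sum_{j=1}^N\alpha_j\Big(\psi_t+V_j^h\cdot\nabla_x\psi+(E^h(X_j^h)+V_j^h\times B(X_j^h))\cdot\nabla_v\psi\Big)\Big|_{(X_j^h(t),V_j^h(t),t)}\,dt,
\end{equation*}
while the initial term reduces to $\sum_j\alpha_j\psi(X_j^h(0),V_j^h(0),0)$ by (\ref{f0htop}).

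The key observation is that, precisely when $(X_j^h,V_j^h)$ solve (\ref{VP2}), the bracketed quantity is the total derivative $\tfrac{d}{dt}\psi(X_j^h(t),V_j^h(t),t)$ along the $j$th trajectory, since $\dot X_j^h=V_j^h$ and $\dot V_j^h=E^h(X_j^h)+V_j^h\times B(X_j^h)$ are exactly the chain-rule coefficients. Integrating in $t$ by the fundamental theorem of calculus, the contribution at $t=T$ vanishes because $\psi$ is supported in $[0,T)$, while the contribution at $t=0$ equals $-\sum_j\alpha_j\psi(X_j^h(0),V_j^h(0),0)$, which cancels the initial term exactly. This shows the weak form holds for every admissible $\psi$, so $f^h$ is a weak solution.

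It then remains to justify that (\ref{VP2}) actually possesses a solution on $[0,T]$. Here I would use that $E^h$ is built from the \emph{mollified} kernel $K_r$, so that $E^h(X_j^h)=\sum_{k}\alpha_k K_r(X_j^h-X_k^h)-\rho_0\!\int K_r(X_j^h-y)\,dy$ depends Lipschitz-continuously on the particle positions, the mollification having removed the singularity of $K$ at the origin (so the self-interaction term $k=j$ is harmless). Together with $B\in W^{1,\infty}(\mathbb{R}^3)$, the right-hand side of the $6N$-dimensional system (\ref{VP2}) is Lipschitz in $(X^h,V^h)$, so the Cauchy--Lipschitz theorem yields a unique local solution; boundedness of $K_r$ and $B$ gives the linear growth bound $|\dot V_j^h|\le C(1+|V_j^h|)$, whence a Grönwall argument extends the solution to all of $[0,T]$.

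The verification itself is essentially bookkeeping with the delta functions, so I expect the only genuinely delicate point to be the well-posedness of the coupled ODE system, since $E^h$ depends self-consistently on all the trajectories at once. This is exactly where the mollification is indispensable: it is what turns the otherwise singular interaction kernel into a globally Lipschitz field and thereby guarantees that the characteristics defining $f^h$ exist, are unique, and remain on $[0,T]$.
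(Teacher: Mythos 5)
Your proof is correct and follows essentially the same route as the paper: substitute the Dirac sum into the weak formulation, recognize the integrand as the total derivative $\frac{d}{dt}\psi(X_j^h(t),V_j^h(t),t)$ along the trajectories precisely when (\ref{VP2}) holds, and cancel the resulting boundary term at $t=0$ against the initial term using the compact support of $\psi$ in $[0,T)$. The only addition is your final paragraph verifying well-posedness of the $6N$-dimensional ODE system via the Lipschitz continuity of the mollified field; the paper's proof leaves this point implicit, so that step strengthens rather than departs from its argument.
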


\begin{proof}
For any test functions $\psi\in C_0^{\infty}$, substituting $$f^h=\sum\limits_{j=1}^N\alpha_j\delta(x-X_j^h(t))\delta(v-V_j^h(t))$$ into the weak formulation (\ref{weakform}) leads to
\begin{equation*}
\begin{aligned}
&\sum\limits_{j=1}^{N} \alpha_j\psi(X_j^h(0),V_j^h(0),0)+\sum\limits_{j=1}^{N}\int_0^T \alpha_j\left(\psi_t(X_j^h(t),V_j^h(t),t)+V_j^h(t)\cdot\nabla_x\psi\right.\\
&\left.+(E^h(X^h_j(t),t)+V_j^h\times B(X^h_j(t)))\cdot\nabla_v\psi\right)dt=0.
\end{aligned}
\end{equation*}
Reorganizing the above equation gives
\begin{equation*}
\begin{aligned}
&\sum\limits_{j=1}^{N}\alpha_j\psi(X_j^h(0),V_j^h(0),0)+\sum\limits_{j=1}^{N}\int_0^T\alpha_j\frac{d\psi}{dt}dt-\sum\limits_{j=1}^{N}\int_0^T\alpha_j\left(\frac{dX_j^h(t)}{dt}-V_j^h(t)\right)\cdot\nabla_x\psi dt\\
&-\sum\limits_{j=1}^{N}\int_0^T\alpha_j\left(\frac{dV_j^h(t)}{dt}-E^h(X^h_j(t),t)-V_j^h(t)\times B(X^h_j(t))\right)\cdot\nabla_v\psi dt=0.
\end{aligned}
\end{equation*}
\end{proof}

\subsection{Construction of mollified electric field}

Notice that the Poisson kernel function $K(x,y)$ in equation (\ref{E0}) is not continuous in full space and explodes at $x=y$, i.e. there is a singularity.
Therefore we need to regularize the kernel function to obtain a mollified electrostatic field.
This idea was first introduced in the literature~\cite{Raviart1985An} where it was used to analyze the convergence of the particle method for the $1$+$1$-dimensional Vlasov--Poisson system, and was subsequently extended to the higher dimensional case~\cite{Victory1989On,Victory1991TheS,Victory1991TheF}.

In this section, we describe how to build a mollified electrostatic field.
For our analysis, we set a rectangular grid for the particle method with constant mesh size in a single dimension.
Let $\Delta{x_i},\Delta{v_i},i=1,2,3$ be the mesh widths of the particle grid. For each $j=(j_1,j_2)\in \mathbb{Z}^{6}$, define the grid by
\begin{align*}
A_j=\{&(x,v)\in \mathbb{R}^{6}\lvert(j_{1,i}-1)\Delta{x_i}\leq x_i\leq j_{1,i}\Delta{x_i},\\
&(j_{2,i}-1)\Delta{v_i}\leq v_i\leq j_{2,i}\Delta{v_i},\ i=1,2,3\},
\end{align*} 
and denote $$(x_{j_1},v_{j_2})=\{((j_{1,i}-1/2)\Delta{x_i},(j_{2,i}-1/2)\Delta{v_i}),i=1,2,3\}$$ as the center of $A_j$, and $\beta=\sqrt{\sum\limits_{i=1}^3(\Delta{x_i}^2+\Delta{v_i}^2)}$ is the size of the grid.
Of course, in practical calculation we consider a finite domain, where the indices will be limited.
But in fact the analysis in this chapter still holds for countable case, so for the time being we do not place special emphasis on limiting the computational area.
We approximate the initial function $f_0$ by Dirac sum $$f_0^h=\sum\limits_{j\in\mathbb{Z}^{6}}\alpha_j\delta(x-x_{j_1})\delta(v-v_{j_2})$$ where
$\alpha_j=f_0(x_{j_1},v_{j_2})\mathop{\Pi}\limits_{i=1}^3\Delta{x_i}\Delta{v_i}$.
The solution $f^h$ can be obtained by solving the equation (\ref{VP2}) with the initial condition $$X^h_j(0)=x_{j_1},\quad V^h_j(0)=v_{j_2}.$$

To solve the above equation (\ref{VP2}), as follows we first introduce the regularizing function $\zeta\in L^{1}\cap L^{\infty}$ which has compact support in interval $[-1,+1]^3$ and satisfies $\int_{\mathbb{R}^{3}}\zeta(x)dx=1.$
It is also often referred to as the shape function.
The most common one is
\begin{equation*}
\zeta(x)=
\begin{cases}
(1-\vert x_1\vert)(1-\vert x_2\vert)(1-\vert x_3\vert)&0\le \Vert x\Vert_{\infty}\le 1, \\
0&\mbox{others.}
\end{cases}
\end{equation*}

For a given $r >0$, denote
$\zeta_{r}(x)=\frac{1}{r^3}\zeta(\frac{x}{r})$
and
$K_{r}(x,\cdot)=K(x,\cdot)\ast\zeta_{r}.$
The mollified electric field $E^h$ can be taken as
\begin{equation}\label{E}
\begin{aligned}
E^h(x,t)&=\int_{\mathbb{R}^{3}} K_{r}(x,y)\left(\int_{\mathbb{R}^{3}}f^h(y,v,t)dv-\rho_0\right)dy\\
&=\sum\limits_{j\in\mathbb{Z}^{6}}\alpha_j K_{r}(x,X_j^h(t))-\rho_0 \int_{\mathbb{R}^{3}} K_{r}(x,y)dy.
\end{aligned}
\end{equation}

If $\zeta_{r}$ is even, we have
\begin{align}
\label{E1} E^h(x,t)&=\int_{\mathbb{R}^{3}} K_{r}(x,y)\left(\int_{\mathbb{R}^{3}}f^h(y,v,t)dv-\rho_0\right)dy\\
\label{E2} &=\int_{\mathbb{R}^{3}} K(x,y)\left(\int_{\mathbb{R}^{3}}f_{r}^h(y,v,t)dv-\rho_0\right)dy,
\end{align}
where $f_{r}^h=\sum\limits_{j\in\mathbb{Z}^{6}}\alpha_j\zeta_{r}(x-X_j^h)\delta(v-V_j^h)$.
Substituting $f_{r}^h$ for the equation (\ref{VP02}) and (\ref{VP03}) gives
\begin{align}
\notag&E^h=-\nabla\phi^h,\\
\label{PEM}&-\Delta\phi^h=\sum\limits_{j\in\mathbb{Z}^{6}}\alpha_j\zeta_{r}(x-X_j^h)-\rho_0.
\end{align}
Therefore, we may consider approximation problem for particle method either as a zero-size particle model coupled with a smoothing electrostatic field by equation~(\ref{E1}), or as a finite-size particle model with radius $r$ by equation~(\ref{E2}).

\subsection{Preliminaries}
In this section we lists some known lemmas which we need for the proof as follows. For the proofs, we can refer to the works in \cite{Raviart1985An,Victory1989On}.

\begin{lemma}\label{lemma1}
Assume that the function $\zeta$ satisfies\\
$(1)$ $\int_{\mathbb{R}^{3}}\zeta(x)dx=1$,\\
$(2)$ $\int_{\mathbb{R}^{3}}x^i\zeta(x)dx=0$ for  $1\le\vert i\vert\le k-1$, where $i\in{\mathbb{N}^{3}}$ is the multiple index, and $k$ is an integer,\\
$(3)$ $\int_{\mathbb{R}^{3}}\lvert x\rvert^k\lvert\zeta(x)\rvert dx<+\infty$.\\
Then for some constant $C>0$ and all $g\in W^{k,p}(\mathbb{R}^{3}),1\le p\le+\infty$,
\begin{center}
$\Vert g-g\ast\zeta_{r}\Vert_{L^p(\mathbb{R}^{3})}\le Cr^k \vert g\vert_{k,p,\mathbb{R}^{3}}$.
\end{center}
\end{lemma}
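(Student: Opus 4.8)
The plan is to reduce the statement to a pointwise Taylor estimate that exploits the vanishing-moment conditions $(2)$, and then to transfer that estimate to the $L^p$ level via Minkowski's integral inequality. Since $C_0^\infty(\mathbb{R}^3)$ is dense in $W^{k,p}(\mathbb{R}^3)$ for $1\le p<+\infty$, I would first establish the bound for smooth $g$ and recover the general case by approximation, noting that both sides of the claimed inequality are continuous in the $W^{k,p}$ norm; the endpoint $p=+\infty$ I would treat by the very same pointwise computation, which never invokes density.

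First, using the normalization $(1)$ in its rescaled form $\int_{\mathbb{R}^3}\zeta_r(y)\,dy=1$, I would write the error as a single integral
$$g(x)-(g\ast\zeta_r)(x)=\int_{\mathbb{R}^3}\bigl(g(x)-g(x-y)\bigr)\zeta_r(y)\,dy.$$
Next I would Taylor-expand $g(x-y)$ about $x$ to order $k$ with integral remainder,
$$g(x-y)=\sum_{|i|\le k-1}\frac{(-y)^i}{i!}D^ig(x)+R_k(x,y),\qquad R_k(x,y)=k\sum_{|i|=k}\frac{(-y)^i}{i!}\int_0^1(1-s)^{k-1}D^ig(x-sy)\,ds.$$
The degree-zero term cancels $g(x)$, while each term with $1\le|i|\le k-1$ integrates to zero against $\zeta_r$: by the scaling identity $\int_{\mathbb{R}^3}y^i\zeta_r(y)\,dy=r^{|i|}\int_{\mathbb{R}^3}z^i\zeta(z)\,dz$ this is exactly hypothesis $(2)$. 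Hence only the remainder survives,
$$g(x)-(g\ast\zeta_r)(x)=-\int_{\mathbb{R}^3}\zeta_r(y)\,R_k(x,y)\,dy.$$

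The estimate then follows mechanically. Bounding $|R_k(x,y)|\le C\,|y|^k\sum_{|i|=k}\int_0^1|D^ig(x-sy)|\,ds$ and taking the $L^p$ norm in $x$, I would apply Minkowski's integral inequality to pull the norm inside the $y$- and $s$-integrations; translation invariance gives $\|D^ig(\cdot-sy)\|_{L^p}=\|D^ig\|_{L^p}$, so the $s$-integral is harmless and the $x$-dependence collapses to the seminorm $|g|_{k,p,\mathbb{R}^3}$. What remains is the scalar factor $\int_{\mathbb{R}^3}|\zeta_r(y)|\,|y|^k\,dy$, which by the change of variables $z=y/r$ equals $r^k\int_{\mathbb{R}^3}|\zeta(z)|\,|z|^k\,dz$, finite by hypothesis $(3)$; this is precisely where the order $r^k$ is produced. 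Collecting the constants yields $\|g-g\ast\zeta_r\|_{L^p}\le Cr^k|g|_{k,p,\mathbb{R}^3}$.

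The only delicate points, and where I expect the main friction, are functional-analytic rather than algebraic: justifying the Taylor expansion for merely $W^{k,p}$ data, which I would handle by density for $p<+\infty$ and by a direct difference-quotient together with Fubini's theorem for $p=+\infty$, and the legitimate use of Minkowski's integral inequality, which requires verifying the joint measurability and integrability that permit interchanging the $L^p$-norm with the $y,s$ integrations. Young's inequality for convolutions supplies the crude a priori bounds that make these interchanges rigorous.
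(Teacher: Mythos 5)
Your proposal is correct: the paper itself gives no proof of this lemma, deferring to the cited references (Raviart's lecture notes and Victory--Gajewski), and your argument --- cancelling the low-order Taylor terms against the vanishing moments of $\zeta_r$, bounding the integral remainder, and passing to the $L^p$ level via Minkowski's integral inequality with the $r^k$ factor produced by the scaling $\int|\zeta_r(y)||y|^k\,dy=r^k\int|\zeta(z)||z|^k\,dz$ --- is precisely the standard proof given there. The functional-analytic caveats you flag (density for $p<+\infty$, the a.e.\ Taylor expansion for $W^{k,\infty}$ data) are handled appropriately, so nothing is missing.
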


\begin{lemma}\label{lemma2}
We denote $A_j(t)=\{(X(t;x,v,0),V(t;x,v,0))\lvert(x,v)\in A_j\}$.
Assume that $f_0\in W^{m+1,\infty}\cap W^{m,1}$ and $B(x)\in W^{m,\infty}$. Then, for all $T>0$, there exists a constant $C(T)$, such that
\begin{equation*}
card(A_j\cap \mathop{\Pi}\limits_{i=1}^3[a_i,b_i]\times\mathop{\Pi}\limits_{i=1}^3[c_i,d_i])\le C(T)\mathop{\Pi}\limits_{i=1}^3(1+\frac{b_i-a_i}{\beta})(1+\frac{d_i-c_i}{\beta}),
\end{equation*}
where $card(A_j\cap \mathop{\Pi}\limits_{i=1}^3[a_i,b_i]\times\mathop{\Pi}\limits_{i=1}^3[c_i,d_i])$ denotes the number of the intersection of $A_j(t)$ and $\mathop{\Pi}\limits_{i=1}^3[a_i,b_i]\times\mathop{\Pi}\limits_{i=1}^3[c_i,d_i]$.
\end{lemma}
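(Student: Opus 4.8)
The plan is to turn the combinatorial estimate into a volume comparison, exploiting the two structural facts already at hand: the characteristic flow of (\ref{VP1}) is volume-preserving by (\ref{sol2}), and under the stated hypotheses it is Lipschitz on $[0,T]$ with a constant depending only on $T$. Write $\Phi^t$ for this flow, so that $A_j(t)=\Phi^t(A_j)$, put $Q=\prod_{i=1}^3[a_i,b_i]\times\prod_{i=1}^3[c_i,d_i]$, and let $\mathcal N=\#\{j:A_j(t)\cap Q\neq\emptyset\}$ be the cardinality to be bounded.

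First I would establish a uniform Lipschitz bound on $\Phi^t$. Differentiating (\ref{VP1}) produces a variational equation whose coefficient matrix is assembled from $\nabla_x E$, $B$, and the $V\times\nabla_x B$ term. Under $B\in W^{m,\infty}$ and $f_0\in W^{m+1,\infty}\cap W^{m,1}$, the standard field-regularity theory yields $E\in W^{1,\infty}$ on the relevant region, while the velocity-moment bounds underlying Theorems \ref{VPexistence}--\ref{VPuniqueness} confine trajectories issued from a bounded set to a fixed compact set $K_T$ on $[0,T]$, so that $|V|$, and hence the whole coefficient matrix, is bounded by some $L(T)$. Grönwall's inequality then gives $\|D\Phi^t\|\le e^{L(T)T}$, and therefore $\mathrm{diam}\,A_j(t)\le e^{L(T)T}\beta$ for every $j$ and every $t\le T$.

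With the diameter bound in hand the counting is a volume argument. If $A_j(t)\cap Q\neq\emptyset$, then every point of $A_j(t)$ lies within $\rho:=e^{L(T)T}\beta$ of $Q$, so $A_j(t)$ is contained in the enlarged box $\prod_i[a_i-\rho,b_i+\rho]\times\prod_i[c_i-\rho,d_i+\rho]$, of measure $\prod_i(b_i-a_i+2\rho)(d_i-c_i+2\rho)$. Because the cells $A_j$ are disjoint and $\Phi^t$ is injective, the images $A_j(t)$ are disjoint, and by (\ref{sol2}) each has measure $\prod_i\Delta x_i\Delta v_i$. Summing over the $\mathcal N$ images that meet $Q$, all of which lie in the enlarged box, gives
\begin{equation*}
\mathcal N\prod_{i=1}^3\Delta x_i\Delta v_i\le\prod_{i=1}^3(b_i-a_i+2\rho)(d_i-c_i+2\rho).
\end{equation*}
Dividing by $\prod_i\Delta x_i\Delta v_i$, substituting $\rho=e^{L(T)T}\beta$, and using the comparability $\Delta x_i,\Delta v_i\sim\beta$ of the mesh widths to bound each factor $(b_i-a_i+2\rho)/\Delta x_i\le C(T)(1+(b_i-a_i)/\beta)$ (and likewise in $v$) produces the asserted estimate, with $C(T)$ absorbing $e^{L(T)T}$ and the mesh ratios.

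The one genuinely nontrivial step is the uniform Lipschitz bound of the second paragraph: it requires both the a priori confinement of trajectories to a compact set on $[0,T]$ (so the $V\times\nabla_x B$ term stays bounded) and the upgrade of the hypotheses on $f_0$ and $B$ to $E\in W^{1,\infty}$. Once that bound is secured, the disjointness-and-volume-preservation counting is routine and follows the template of \cite{Raviart1985An,Victory1989On}.
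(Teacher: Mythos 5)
The paper itself offers no proof of this lemma --- it is quoted as known, with a pointer to \cite{Raviart1985An,Victory1989On} --- and your packing skeleton (uniform diameter bound for $A_j(t)$, then injectivity of the flow plus volume preservation (\ref{sol2}) to pack the enlarged box) is essentially the classical argument of those references; that part of your write-up is sound, and your explicit use of mesh comparability $\Delta x_i,\Delta v_i\sim\beta$ is not a blemish but a necessity (the stated bound is false for strongly anisotropic grids).

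The genuine gap is the step you yourself call nontrivial: the $j$-uniform Lipschitz bound. Your justification --- trajectories ``issued from a bounded set'' stay in a compact set $K_T$, so the coefficient matrix of the variational equation is bounded by some $L(T)$ --- does not apply here, because the cells $A_j$, $j\in\mathbb{Z}^6$, tile \emph{all} of phase space: the initial velocities $v_{j_2}$ are unbounded, and velocity-moment bounds on $f_0$ control integrals of $f$, not the size of individual characteristics. Along a characteristic of (\ref{VP1}) one only has $\lvert V(t)\rvert\le\lvert V(0)\rvert+T\Vert E\Vert_{L^\infty}$ (the magnetic force is orthogonal to $V$), so for a cell with $\lvert v_{j_2}\rvert=M$ the linearized flow obeys
\begin{equation*}
\dot{\delta X}=\delta V,\qquad
\dot{\delta V}=\nabla E(X)\,\delta X+\delta V\times B(X)+V\times\bigl(\nabla_x B(X)\,\delta X\bigr),
\end{equation*}
whose last coefficient has size of order $M\Vert\nabla B\Vert_{L^\infty}$; it vanishes only when $B$ is constant, which is exactly the setting of the cited references. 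Gr\"onwall therefore yields a stretching factor that grows with $M$ (roughly $e^{c\sqrt{M}\,T}$), not a $j$-independent $e^{L(T)T}$, so $\mathrm{diam}\,A_j(t)\le e^{L(T)T}\beta$ fails to be uniform and your packing inequality only produces a constant depending on the velocity location of $Q$. This uniformity is not cosmetic: in the proof of Theorem~\ref{Convergence} the lemma is applied to boxes centered at every $\xi\in\mathbb{Z}^3$ in velocity and the resulting counts are summed against only the polynomial decay $(1+\lvert\xi\rvert)^{-\gamma}$, so a constant growing exponentially in $\lvert\xi\rvert$ would make those sums diverge. The missing idea is precisely how to control the $V\times\nabla_x B$ term uniformly over the grid (e.g.\ by restricting to the cells that can reach a given $Q$ and tracking how the constant then depends on $Q$, by exploiting the near-conservation of $\lvert V\rvert$, or by a change of variables adapted to the scaling $B(\varepsilon x)/\varepsilon$); as written, your argument establishes the lemma only for homogeneous $B$, i.e.\ the case already covered by \cite{Raviart1985An,Victory1989On}.
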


\begin{lemma}\label{lemma3}
Denote $\mathcal{E}_j(g)=\int_{A_j}g(x,v)dxdv-\mathop{\Pi}\limits_{i=1}^3\Delta{x_i}\Delta{v_i}g(x_{j_1},v_{j_2})$. Let $m\ge 1$ be an integer, $p>2n/m$ and $1/p+1/q=1$. Then there is a constant $C>0$ independent of $\Delta{x_i}$, $\Delta{v_i}$ and $j$, such that the following estimation holds.
\begin{equation*}
\lvert\sum_j\mathcal{E}_j(g)\rvert\le C\beta^m(\mathop{\Pi}\limits_{i=1}^3\Delta{x_i}\Delta{v_i})^{1/q}\sum_j\lvert g\rvert_{m,p,A_j}
\end{equation*}
for all $g\in W^{m,p}\cap L^{1}(m\le 2)$ or $g\in W^{m,p}\cap  W^{m,1}(m\ge 3)$.

\end{lemma}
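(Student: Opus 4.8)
The plan is to prove the estimate one cell at a time and then sum, treating the one-point (midpoint) rule $\mathcal{E}_j$ as a bounded linear functional that annihilates low-degree polynomials. First I would pass to a fixed reference cell $\hat A=[-\tfrac12,\tfrac12]^{2n}$ through the affine map $F_j\colon\hat A\to A_j$ that carries the center of $\hat A$ to $(x_{j_1},v_{j_2})$ and has Jacobian $\lvert A_j\rvert=\prod_{i=1}^3\Delta x_i\Delta v_i$. Writing $\hat g=g\circ F_j$, a change of variables gives $\mathcal{E}_j(g)=\lvert A_j\rvert\,\hat{\mathcal E}(\hat g)$, where $\hat{\mathcal E}(\hat g)=\int_{\hat A}\hat g\,d\hat z-\hat g(0)$ is the reference midpoint error. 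Since $p>2n/m$, the Sobolev embedding $W^{m,p}(\hat A)\hookrightarrow C^0(\hat A)$ holds, so point evaluation is continuous and $\hat{\mathcal E}$ is a bounded linear functional on $W^{m,p}(\hat A)$. I would also record the seminorm scaling $\lvert\hat g\rvert_{m,p,\hat A}\le C\beta^m\lvert A_j\rvert^{-1/p}\lvert g\rvert_{m,p,A_j}$, which is the source of the factor $\beta^m$.

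For $m\le 2$ the midpoint rule is exact on $P_{m-1}$ (constants when $m=1$, affine functions when $m=2$), so $\hat{\mathcal E}$ annihilates $P_{m-1}$ and the Bramble--Hilbert lemma yields $\lvert\hat{\mathcal E}(\hat g)\rvert\le C\lvert\hat g\rvert_{m,p,\hat A}$. Combining this with the scaling relations and $1-1/p=1/q$ gives the per-cell bound $\lvert\mathcal E_j(g)\rvert\le C\beta^m\lvert A_j\rvert^{1/q}\lvert g\rvert_{m,p,A_j}$, and since the grid is uniform the factor $\lvert A_j\rvert^{1/q}=(\prod_{i}\Delta x_i\Delta v_i)^{1/q}$ is independent of $j$. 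Summing over $j$ with the triangle inequality then produces exactly the claimed estimate, the hypothesis $g\in L^1$ ensuring that both $\int g$ and $\sum_j\lvert A_j\rvert g(x_{j_1},v_{j_2})$ converge absolutely.

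The hard part is the range $m\ge 3$, where the cell-by-cell argument breaks down: the one-point rule only integrates $P_1$ exactly, so $\hat{\mathcal E}$ does not annihilate $P_2$ and Bramble--Hilbert saturates at order two. Reaching the order $\beta^m$ therefore requires exploiting cancellation \emph{between} cells on the uniform tensor-product grid rather than bounding each $\mathcal E_j$ in isolation. My plan is to represent the composite error through an Euler--Maclaurin expansion applied successively in each of the $2n$ coordinate directions: in one variable the composite midpoint error equals $\beta^m\int\mathcal K(\cdot/\Delta)\,\partial^m g$ against a bounded, zero-mean periodic (Bernoulli-type) kernel $\mathcal K$, and the intermediate boundary terms telescope to zero because $g$ is integrable together with its derivatives up to order $m$. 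Iterating over the six directions and collecting the mixed terms gives a remainder controlled by $\beta^m\lVert D^m g\rVert_{L^1(\mathbb R^{2n})}$. Finally I would convert this global $L^1$ bound into the stated right-hand side by Hölder on each cell, $\lVert D^m g\rVert_{L^1(\mathbb R^{2n})}=\sum_j\lVert D^m g\rVert_{L^1(A_j)}\le(\prod_i\Delta x_i\Delta v_i)^{1/q}\sum_j\lvert g\rvert_{m,p,A_j}$, which is precisely why the stronger regularity $g\in W^{m,1}$ (rather than merely $L^1$) must be assumed when $m\ge 3$.

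I expect the main obstacle to be this super-convergence of the composite one-point rule: reconciling the order-$m$ conclusion with the intrinsically second-order local accuracy of the midpoint rule forces the global cancellation argument above, and in the multidimensional setting the careful dimension-by-dimension bookkeeping of the mixed-derivative terms — together with verifying that the boundary contributions genuinely vanish under the integrability hypotheses — is the delicate step.
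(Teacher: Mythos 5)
A preliminary remark on the comparison itself: the paper never proves this lemma --- it is quoted in the Preliminaries with a pointer to Raviart (1985) and Victory--Merino (1989) --- so your attempt has to be judged against that classical argument. Your treatment of the range $m\le 2$ (affine map to a reference cell, boundedness of the midpoint-error functional via the embedding $W^{m,p}\hookrightarrow C^0$ guaranteed by $p>2n/m$, Bramble--Hilbert, and the scaling $|\mathcal{E}_j(g)|\le C\beta^m|A_j|^{1/q}|g|_{m,p,A_j}$) is correct and is essentially the classical proof. You also correctly identify that for $m\ge 3$ the local argument saturates at order two and some intercell cancellation is mandatory.

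The gap is in how you execute the case $m\ge 3$. Your plan is to prove the intermediate bound $\bigl|\sum_j\mathcal{E}_j(g)\bigr|\le C\beta^m\Vert D^mg\Vert_{L^1(\mathbb{R}^{2n})}$ by tensorized Euler--Maclaurin and then relax it by H\"older to the stated right-hand side. Two problems. First, the iteration does not close: after applying the one-dimensional estimate in direction $i$, what remains is a \emph{quadrature sum} over the already-discretized directions of the quantity $\int|\partial_i^m(\cdots)|\,dx_i$, i.e.\ point evaluations of a function that your hypotheses control only in $L^1$; such sums are not bounded by integrals, and controlling them is exactly where the embedding $p>2n/m$ must be re-invoked (you use it only for $m\le 2$). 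Equivalently, honest multidimensional Euler--Maclaurin produces \emph{mixed} derivatives of total order exceeding $m$, not just $D^m g$. Second, and more seriously, the intermediate inequality you are aiming at is false when $m<2n=6$, a range the lemma explicitly covers (e.g.\ $m=3,4,5$ with $p>2n/m$). By Poisson summation the composite midpoint error equals $-\sum_{k\neq 0}(-1)^{k_1+\cdots+k_{2n}}\hat g(2\pi k_1/\Delta_1,\dots)$, and $L^1$ control of $D^mg$ yields only the decay $|k|^{-m}$, whose lattice sum diverges in dimension $2n$ for $m\le 2n$; this divergence is realizable. Taking $g$ to be a superposition of lattice-frequency oscillations with a Calder\'on--Zygmund-type amplitude $|k|^{-m}$ (signs chosen to defeat the midpoint alternation), modulated by a fixed bump, the error grows like $K^{2n-m}$ in the frequency-truncation parameter $K$, while $\beta^m\Vert g\Vert_{W^{m,1}}$ grows only logarithmically. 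No contradiction with the lemma arises because for such $g$ the local $L^p$ seminorms blow up: the $\ell^1$-of-local-$L^p$ structure of the right-hand side, with $p>2n/m$, is essential and not a cosmetic H\"older relaxation of an $L^1$ bound. The classical proofs for $m\ge 3$ keep that structure at every step: they correct the per-cell functional by Euler--Maclaurin-type face terms that telescope across neighbouring cells, and apply Bramble--Hilbert (hence the hypothesis $p>2n/m$) to the \emph{corrected} local functional, rather than passing through a global $W^{m,1}$ estimate.
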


\begin{lemma}\label{lemma4}
Assume that $\zeta\in W^{l,\infty}\cap W^{l,1}$ and $k(x)=\frac{1}{4\pi}\frac{x}{\vert x\vert^3}$. Then, the following inequality holds
\begin{equation*}
\lvert D^{\alpha}k_r\rvert\le Cr^{-(l+2)},
\end{equation*}
for all $\lvert\alpha\rvert=l$ and some constant $C>0$.
\end{lemma}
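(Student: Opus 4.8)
The plan is to remove the $r$-dependence by exploiting the homogeneity of the kernel, and then to control the remaining $r$-independent convolution by splitting it at the scale separating the singular and the decaying regimes of $k$. Recall that $k(x)=\frac{1}{4\pi}\frac{x}{\lvert x\rvert^{3}}$ is positively homogeneous of degree $-2$, i.e. $k(\lambda x)=\lambda^{-2}k(x)$ for $\lambda>0$, and that $k_{r}=k\ast\zeta_{r}$ with $\zeta_{r}(x)=r^{-3}\zeta(x/r)$. Writing $k_{1}=k\ast\zeta$ and substituting $y=rw$ in the convolution, the homogeneity of $k$ gives the scaling identity
\[
k_{r}(x)=\int_{\mathbb{R}^{3}}k(x-rw)\zeta(w)\,dw=r^{-2}\int_{\mathbb{R}^{3}}k(x/r-w)\zeta(w)\,dw=r^{-2}k_{1}(x/r).
\]
Differentiating $\lvert\alpha\rvert=l$ times in $x$ then yields $D^{\alpha}k_{r}(x)=r^{-(l+2)}(D^{\alpha}k_{1})(x/r)$, so that $\lvert D^{\alpha}k_{r}\rvert\le r^{-(l+2)}\lVert D^{\alpha}k_{1}\rVert_{L^{\infty}}$. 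This isolates the entire $r$-dependence in the prefactor and reduces the lemma to the single claim that $\lVert D^{\alpha}k_{1}\rVert_{L^{\infty}}\le C$ for $\lvert\alpha\rvert=l$.

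Next I would transfer all $l$ derivatives onto the shape function. Since $\zeta\in W^{l,\infty}$ its derivatives up to order $l$ exist in $L^{\infty}$, and since $k\in L^{1}_{\mathrm{loc}}(\mathbb{R}^{3})$ (in three dimensions $\lvert k(z)\rvert\sim\lvert z\rvert^{-2}$ is integrable near the origin) one may differentiate under the integral sign to obtain, in the weak sense, $D^{\alpha}k_{1}=k\ast D^{\alpha}\zeta$.

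Finally I would estimate $k\ast D^{\alpha}\zeta$ pointwise by splitting the convolution at $\lvert z\rvert=1$, the scale separating the singular and the decaying behaviour of $k$. On the near region $\{\lvert z\rvert<1\}$ I bound $\lvert k(z)\rvert\le\frac{1}{4\pi}\lvert z\rvert^{-2}$ and factor out $\lVert D^{\alpha}\zeta\rVert_{L^{\infty}}$; the remaining integral $\int_{\lvert z\rvert<1}\lvert z\rvert^{-2}\,dz$ is finite in $\mathbb{R}^{3}$. On the far region $\{\lvert z\rvert\ge1\}$ I use $\lvert k(z)\rvert\le\frac{1}{4\pi}$ and bound the rest by $\lVert D^{\alpha}\zeta\rVert_{L^{1}}$. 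Together these give $\lVert D^{\alpha}k_{1}\rVert_{L^{\infty}}\le C\bigl(\lVert D^{\alpha}\zeta\rVert_{L^{\infty}}+\lVert D^{\alpha}\zeta\rVert_{L^{1}}\bigr)$, which is finite precisely because $\zeta\in W^{l,\infty}\cap W^{l,1}$; combined with the scaling identity this proves $\lvert D^{\alpha}k_{r}\rvert\le Cr^{-(l+2)}$.

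I expect the only genuinely delicate point to be the justification of $D^{\alpha}k_{1}=k\ast D^{\alpha}\zeta$ under the hypothesis $\zeta\in W^{l,\infty}$ rather than $\zeta\in C^{l}$: the identity must be read distributionally and checked against the local singularity of $k$, the point being that the difference quotients are dominated by $\lvert k\rvert$ times a bounded factor on the relevant bounded set. The remaining computations are routine, and the two norms appearing in the hypothesis are seen to be exactly what the near region (through $\lVert D^{\alpha}\zeta\rVert_{L^{\infty}}$) and the far region (through $\lVert D^{\alpha}\zeta\rVert_{L^{1}}$) each consume.
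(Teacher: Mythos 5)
Your proof is correct: the scaling identity $k_{r}(x)=r^{-2}k_{1}(x/r)$, the transfer of all $l$ derivatives onto the shape function, and the near/far splitting of $k\ast D^{\alpha}\zeta$ at $\lvert z\rvert=1$ are all valid, and the two hypotheses $\zeta\in W^{l,\infty}$ and $\zeta\in W^{l,1}$ are consumed exactly where you say they are. The paper itself does not prove this lemma (it defers to the cited works of Raviart and Victory et al.), and your argument is essentially the standard one found there, where the same near/far splitting is applied directly to $k\ast D^{\alpha}\zeta_{r}$ at scale $r$ instead of first rescaling to $r=1$; the two organizations of the computation are equivalent.
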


\begin{lemma}\label{lemma5}
Assume that $\zeta\in L^{1}$ and $k(x)=\frac{1}{4\pi}\frac{x}{\vert x\vert^3}$. Then, for all integers $l\ge 2$ and some constant $C>0$,
\begin{equation*}
\int_{\lvert x\rvert\ge 2r}\lvert D^{\alpha}k_r(x)\rvert dx\le Cr^{-(l-1)}, \lvert\alpha\rvert=l.
\end{equation*}
\end{lemma}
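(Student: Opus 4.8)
The plan is to exploit the fact that here $\zeta$ is only assumed to lie in $L^1$, so the $l$ derivatives cannot be thrown onto $\zeta$ as in Lemma \ref{lemma4}; instead I would put all of them on the kernel $k$, which is smooth away from the origin. Writing $k_r=k\ast\zeta_r$ and using $D^\alpha k_r=(D^\alpha k)\ast\zeta_r$, I would start from
\begin{equation*}
\int_{\lvert x\rvert\ge 2r}\lvert D^\alpha k_r(x)\rvert\,dx\le\int_{\mathbb{R}^3}\lvert\zeta_r(y)\rvert\left(\int_{\lvert x\rvert\ge 2r}\lvert (D^\alpha k)(x-y)\rvert\,dx\right)dy,
\end{equation*}
which follows by bounding the convolution pointwise by its absolute value and applying Fubini's theorem. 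The whole estimate then reduces to controlling the inner integral uniformly in $y$.

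The key step is to bound that inner integral by $Cr^{-(l-1)}$ using two ingredients. First, homogeneity: since $k$ is homogeneous of degree $-2$, $D^\alpha k$ is homogeneous of degree $-2-l$, so $\lvert (D^\alpha k)(z)\rvert\le C\lvert z\rvert^{-2-l}$ for $z\neq0$, with $C$ the maximum of the (smooth) function $D^\alpha k$ on the unit sphere. Second, the support of the mollifier: under the standing assumption $\operatorname{supp}\zeta\subseteq[-1,1]^3$ we have $\operatorname{supp}\zeta_r\subseteq\{\lVert y\rVert_\infty\le r\}$, hence $\lvert y\rvert\le r\sqrt3<2r$ on the support. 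Substituting $z=x-y$, the region $\lvert x\rvert\ge 2r$ maps into $\{\lvert z+y\rvert\ge 2r\}\subseteq\{\lvert z\rvert\ge 2r-\lvert y\rvert\}\subseteq\{\lvert z\rvert\ge(2-\sqrt3)r\}$, so we stay a fixed fraction of $r$ away from the singularity. A radial computation in $\mathbb{R}^3$ then gives, for $l\ge2$,
\begin{equation*}
\int_{\lvert z\rvert\ge(2-\sqrt3)r}\lvert z\rvert^{-2-l}\,dz=4\pi\int_{(2-\sqrt3)r}^{\infty}\rho^{-l}\,d\rho=\frac{4\pi}{l-1}\bigl((2-\sqrt3)r\bigr)^{-(l-1)}=Cr^{-(l-1)},
\end{equation*}
the convergence at infinity being exactly what forces the hypothesis $l\ge2$. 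Since this bound is independent of $y$, I would pull it out of the outer integral and finish with $\int_{\mathbb{R}^3}\lvert\zeta_r(y)\rvert\,dy=\lVert\zeta\rVert_{L^1}<\infty$ (the $L^1$ norm being scale invariant), yielding the claimed $Cr^{-(l-1)}$.

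I expect the only genuinely delicate point to be the geometric bookkeeping of the support: the estimate rests entirely on the compact support of $\zeta$ keeping $x-y$ uniformly away from the origin, because $D^\alpha k$ is homogeneous of degree $-2-l\le-4$ and hence is \emph{not} locally integrable near $0$ in $\mathbb{R}^3$. Thus the bare $L^1$ hypothesis on $\zeta$ would be insufficient without the compact-support assumption inherited from the definition of the shape function, and verifying that the translated region indeed satisfies $\lvert z\rvert\ge(2-\sqrt3)r$ is the one inequality that must be handled with care before the radial integral and the homogeneity scaling combine to give the stated rate.
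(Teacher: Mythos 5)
Your proof is correct. Note that the paper never proves Lemma~\ref{lemma5} itself --- it lists it among known preliminary results and refers the reader to \cite{Raviart1985An,Victory1989On} --- and your argument (all $l$ derivatives placed on the kernel, the compact support of $\zeta_r$ keeping $x-y$ at distance at least $(2-\sqrt3)r$ from the singularity so that differentiation under the integral is legitimate, then homogeneity of $D^\alpha k$ plus a radial integral convergent precisely when $l\ge 2$) is exactly the standard proof in those references, including your correct observation that it is the compact support of the shape function, not merely $\zeta\in L^1$, that makes the estimate go through.
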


\subsection{Error estimation of particle method}

In this section, we consider the error analysis for the maximal ordering scaling case.  In this case, the external magnetic field is given by $\frac{B(\varepsilon x)}{\varepsilon}$, where $\varepsilon$ is the parameter. 
Denote the error as $$e^h(t)=\max\limits_j(\lvert X_j(t)-X_j^h(t)\rvert+\lvert V_j(t)-V_j^h(t)\rvert)$$ with  $X_j(t)=X(t;x_{j_1},v_{j_2},0)$ and $V_j(t)=V(t;x_{j_1},v_{j_2},0)$ are the corresponding exact solutions.
\begin{theorem}\label{Convergence}
Take the approximation of $E$ as
$$E^h(x,t)=\int_{\mathbb{R}^{3}} K(x,y)\left(\sum\limits_j\alpha_j\zeta_{r}(x-X_j^h)-\rho_0\right)dy,$$
under the assumption in Lemma~\ref{lemma1} and suppose that\\
(a) $f_0,\rho\in W^{s,1}\cap W^{s,\infty}$, $s=\max{(k,7)}$ and $\lvert D^{\alpha}f_0(x,v)\rvert\le C(1+\vert v\vert)^{-\mu}$, $\mu>1$, $\lvert\alpha\rvert\le s$.\\
(b) $\zeta$ is an even  function and belongs to the space $W^{s,1}\cap W^{s,\infty}$.\\
(c) $r=O(\beta^{\gamma}),0<\gamma\le 1$.\\
Then, we have the following estimate
\begin{equation*}
\begin{aligned}
\Vert E-E^h\Vert_{L^\infty}\le C(f_0,\zeta,T)(r^k+\frac{\beta^{s}}{r^{s-1}}+\max\limits_j\vert X_j^h-X_j\vert).
\end{aligned}
\end{equation*}
\end{theorem}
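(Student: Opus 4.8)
The plan is to insert two intermediate electric fields between $E$ and $E^h$ and to control the three resulting differences separately, matching term by term the three contributions on the right-hand side. Define the field built from the true solution but with the mollified kernel,
\[
E_r(x,t)=\int_{\mathbb{R}^3}K_r(x,y)\Big(\int_{\mathbb{R}^3}f(y,v,t)\,dv-\rho_0\Big)dy,
\]
and the discretization evaluated along the \emph{exact} characteristics $X_j(t)=X(t;x_{j_1},v_{j_2},0)$,
\[
\tilde E^h(x,t)=\sum_j\alpha_j\,K_r(x,X_j(t))-\rho_0\int_{\mathbb{R}^3}K_r(x,y)\,dy.
\]
Then the triangle inequality gives $\|E-E^h\|_{L^\infty}\le\|E-E_r\|_{L^\infty}+\|E_r-\tilde E^h\|_{L^\infty}+\|\tilde E^h-E^h\|_{L^\infty}$, and I will show these three pieces are bounded by $r^k$, $\beta^s/r^{s-1}$, and $\max_j|X_j^h-X_j|$ respectively.

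For the first piece, since $\zeta_r$ is even I rewrite $E_r=k\ast(\rho_r-\rho_0)$ with $\rho_r=\zeta_r\ast\rho$, so that $E-E_r=k\ast(\rho-\rho_r)$. Lemma~\ref{lemma1} applied to $\rho$, which lies in $W^{s,1}\cap W^{s,\infty}$ by assumption (a), gives $\|\rho-\rho_r\|_{L^p}\le Cr^k|\rho|_{k,p}$ for every $p$; combining the $p=1$ and $p=\infty$ estimates with a near/far splitting of the locally integrable Coulomb kernel $k(x)=\frac{1}{4\pi}\frac{x}{|x|^3}$ (equivalently an interpolation bound for the Newtonian potential) yields $\|E-E_r\|_{L^\infty}\le Cr^k$. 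The third piece is the trajectory error: by the mean value theorem and Lemma~\ref{lemma4} with $l=1$, $K_r(x,\cdot)$ is Lipschitz with constant $\|\nabla K_r\|_{L^\infty}\le Cr^{-3}$, hence
\[
\|\tilde E^h-E^h\|_{L^\infty}\le\sum_j\alpha_j\,\|\nabla K_r\|_{L^\infty}\,|X_j(t)-X_j^h(t)|\le C\Big(\sum_j\alpha_j\Big)\max_j|X_j^h-X_j|.
\]
Since $\sum_j\alpha_j$ approximates $\|f_0\|_{L^1}$ and is finite thanks to the velocity decay in (a), the Lipschitz constant is absorbed into $C(f_0,\zeta,T)$ and this piece is bounded by $C\max_j|X_j^h-X_j|$.

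The middle piece is the quadrature error and is the main obstacle. Writing $f(y,v,t)=f_0(X(0;y,v,t),V(0;y,v,t))$ and changing variables by the volume-preserving flow (\ref{sol2}), $E_r$ becomes $\int\!\int K_r(x,X(t;x',v',0))f_0(x',v')\,dx'dv'-\rho_0\int K_r\,dy$, whose midpoint rule on the cells $A_j$ is exactly $\tilde E^h$. Thus
\[
E_r(x,t)-\tilde E^h(x,t)=\sum_j\mathcal{E}_j(g_x),\qquad g_x(x',v')=K_r(x,X(t;x',v',0))\,f_0(x',v'),
\]
and Lemma~\ref{lemma3} bounds this by $C\beta^s(\prod_i\Delta x_i\Delta v_i)^{1/q}\sum_j|g_x|_{s,p,A_j}$. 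The derivatives of $g_x$ fall either on the flow map, whose derivatives up to order $s$ are bounded uniformly on $[0,T]$ because $B\in W^{s,\infty}$, or on $K_r$, for which Lemma~\ref{lemma4} gives the pointwise bound $r^{-(s+2)}$. A naive use of this bound loses too many powers of $r$; the refinement is to split the $j$-sum into near cells with $|x-X_j(t)|<2r$ and far cells with $|x-X_j(t)|\ge 2r$. On the far cells I replace the pointwise bound by the integrated bound of Lemma~\ref{lemma5}, $\int_{|z|\ge 2r}|D^\alpha k_r|\,dz\le Cr^{-(s-1)}$, while on the near cells Lemma~\ref{lemma2} controls the number of contributing particles in a ball of radius $2r$ by $C(T)(1+r/\beta)^6$. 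Assembling the two contributions and invoking the scaling $r=O(\beta^\gamma)$ of assumption (c), the singular factor collapses to $r^{-(s-1)}$ and this piece is bounded by $C\beta^s/r^{s-1}$.

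The hard part is exactly this near/far bookkeeping: the pointwise kernel bound is too crude, so one must trade it for the integrated bound of Lemma~\ref{lemma5} away from the singularity and the counting bound of Lemma~\ref{lemma2} near it, and then verify that the velocity decay $|D^\alpha f_0|\le C(1+|v|)^{-\mu}$ with $\mu>1$ makes the sum over the infinitely many cells in the $v$-direction converge (this is also what guarantees $\sum_j\alpha_j<\infty$ above). Summing the three pieces gives the claimed estimate, with the choice $s=\max(k,7)$ ensuring both the admissibility condition $p>2n/m$ in Lemma~\ref{lemma3} and the order needed for the counting and integrated-kernel estimates.
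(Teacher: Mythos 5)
Your decomposition via the two intermediate fields $E_r$ and $\tilde E^h$ is the paper's own decomposition in disguise: your three pieces are exactly the paper's $\mathcal{A}+\mathcal{B}$, $\mathcal{C}$ and $\mathcal{D}$, and your treatment of the first two (Young's inequality with the near/far splitting of $k$ plus Lemma~\ref{lemma1}; the volume-preserving change of variables, Lemma~\ref{lemma3}, and the near/far bookkeeping with Lemmas~\ref{lemma2}, \ref{lemma4}, \ref{lemma5}) coincides with the paper's.

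The trajectory piece, however, has a genuine gap. By Lemma~\ref{lemma4} with $l=1$, the Lipschitz constant of $K_r(x,\cdot)$ is of order $r^{-3}$, and $r\to 0$: it is a parameter, not a constant, so it cannot be ``absorbed into $C(f_0,\zeta,T)$''. Your argument as written only gives $\Vert\tilde E^h-E^h\Vert_{L^\infty}\le Cr^{-3}\max_j\vert X_j^h-X_j\vert$, which is not the claimed bound and is useless downstream: in Theorem~\ref{Convergence2} this term must enter Gronwall's inequality with an $O(1)$ coefficient, and a coefficient $r^{-3}$ would produce growth like $e^{CT/r^{3}}$ and no convergence. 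Nor can you repair it by evaluating $\nabla K_r$ pointwise at the intermediate points of the mean value theorem (using $\vert\nabla k_r\vert\lesssim\vert x\vert^{-3}$ away from the singularity): the far-field sum then behaves like $\int_{r\le\vert x-y\vert\le 1}\vert x-y\vert^{-3}dy\sim\log(1/r)$, which is precisely why Lemma~\ref{lemma5} is stated only for $l\ge 2$.

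The paper's mechanism for $\mathcal{D}$ is the missing idea: keep the convolution structure, $K_r(x,X_j)-K_r(x,X_j^h)=\bigl[k\ast\bigl(\zeta_r(\cdot-X_j)-\zeta_r(\cdot-X_j^h)\bigr)\bigr](x)$, apply Young's inequality with the $L^1/L^\infty$ splitting of $k$, and---crucially---exploit the compact support of $\zeta_r$ together with the counting Lemma~\ref{lemma2} (the set $J'(t,\xi)$) and the velocity decay in assumption (a): only particles with $X_j\in B(x,r+\max_j\vert X_j^h-X_j\vert)$ can contribute, and their total weight is $O\bigl((r+\beta+\max_j\vert X_j^h-X_j\vert)^3\bigr)$. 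It is this volume factor that offsets the $r^{-3}$ coming from $\Vert\zeta_r\Vert_{L^\infty}$, leaving a bound of the form $C(1+\beta/r)^3\max_j\vert X_j^h-X_j\vert$, which is $O(\max_j\vert X_j^h-X_j\vert)$ thanks to assumption (c). In particular, (c) is needed for this term too, not only for the quadrature piece as your write-up suggests; your first two pieces can stand, but the third must be redone along these lines.
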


\begin{proof}
By using form (\ref{E0}) and (\ref{E1}), we have
\begin{align}\label{eq:eqE}
E-E^h=&\underbrace{\int_{\mathbb{R}^{3}}\int_{\mathbb{R}^{3}}(K-K_{r})fdvdy}_{\mathcal{A}}+\underbrace{\rho_0 \int_{\mathbb{R}^{3}} K_{r}dy-\rho_0 \int_{\mathbb{R}^{3}} Kdy}_{\mathcal{B}}\\
+&\underbrace{\int_{\mathbb{R}^{3}}\int_{\mathbb{R}^{3}}K_{r}fdvdy-\sum\limits_{j}\alpha_j K_{r}(x,X_j(t))}_{\mathcal{C}}+\underbrace{\sum\limits_{j}\alpha_j(K_{r}(x,X_j(t))-K_{r}(x,X_j^h(t)))}_{\mathcal{D}}.
\end{align}

Due to the symmetry of $\zeta_{r}$, and $\rho$ has the form in (\ref{VP04}) we have
\begin{align}\label{eq:eqA}
\mathcal{A}=&k\ast(\rho-\rho\ast\zeta_{r}),
\end{align}
where $k(x)=\frac{1}{4\pi}\frac{x}{\vert x\vert^3}$.
Denote $B(0,1)$ the ball with center $0$ and radius $1$ and $B(0,1)^c$ is its complementary set, $k$ can be expressed as
$k={\bf{1}}_{B(0,1)}k+{\bf{1}}_{B(0,1)^c}k.$
It is clear that ${\bf{1}}_{B(0,1)}k=O(\vert x\vert^{-2}{\bf{1}}_{\vert x\vert\le 1})\in L^1(\mathbb{R}^{3})$ and  ${\bf{1}}_{B(0,1)^{c}}k=O(\vert x\vert^{-2}{\bf{1}}_{\vert x\vert\ge 1})\in L^{\infty}(\mathbb{R}^{3})$. Moreover, $\rho\in L^1(\mathbb{R}^{3})$ due to the global mass conservation.
Thus, by applying the Young's inequality  to (\ref{eq:eqA}) and  using Lemma~\ref{lemma1},  one concludes that
\begin{align*}
\Vert\mathcal{A}\Vert_{L^{\infty}}=&\Vert k\ast(\rho-\rho\ast\zeta_{r})\Vert_{L^{\infty}}\\
\le&C(\Vert\rho-\rho\ast\zeta_{r}\Vert_{L^{1}}+\Vert\rho-\rho\ast\zeta_{r}\Vert_{L^{\infty}})\\
\le&C(f_0)r^k.
\end{align*}

For term $\mathcal{B}$ in (\ref{eq:eqE}), with the use of Lemma~\ref{lemma1} it can  derived directly that $\mathcal{B}=0$.

According to (\ref{sol1}) and (\ref{sol2}), we have
\begin{equation*}
\begin{aligned}
&\int_{\mathbb{R}^{3}}\int_{\mathbb{R}^{3}}K_{r}(x,y)f(y,v,t)dvdy\\
=&\int_{\mathbb{R}^{3}}\int_{\mathbb{R}^{3}}K_{r}(x,y)f_0(X(0;y,v,t),V(0;y,v,t))dvdy\\
=&\int_{\mathbb{R}^{3}}\int_{\mathbb{R}^{3}}K_{r}(x,X(t;y,v,0))f_0(y,v)dvdy.
\end{aligned}
\end{equation*}
The last equation holds due to $X(0;X(t;y,v,0),V(t;y,v,0),t)=y$.
Thus, term $\mathcal{C}$ can be expressed as
\begin{align*}
\mathcal{C}=&\int_{\mathbb{R}^{3}}\int_{\mathbb{R}^{3}}K_{r}(x,X(t;y,v,0))f_0(y,v)dvdy-\sum\limits_{j}\alpha_j K_{r}(x,X(t;x_{j_1},v_{j_2},0))\\
=&\sum\limits_{j}\mathcal{E}_j(f_0K_r(x,X(t;\cdot,\cdot,0))).
\end{align*}
We split $\{j\,\lvert j\in \mathbb{Z}^6\}$ as two sets, it follows from the above equality that
\begin{align}\label{eq:eqC}
\mathcal{C}=\sum\limits_{j\in I_1\times\mathbb{Z}^3}\mathcal{E}_j(f_0K_r(x,X(t;\cdot,\cdot,0)))+\sum\limits_{j\in I_2\times\mathbb{Z}^3}\mathcal{E}_j(f_0K_r(x,X(t;\cdot,\cdot,0))),
\end{align}
where $I_1=\{j_1\,\lvert A_{j_1}(t)\cap\mathop{\Pi}\limits_{i=1}^3\otimes[x_i-r,x_i+r]\neq\varnothing \}$ and $I_2=\mathbb{Z}^3\backslash I_1$.
Then we introduce $$J(t,\xi)=\{(j_1,j_2)\lvert j_1\in I_1, \xi_i\le v_{j_2,i}\le\xi_i+1,  i=1,2,3\}.$$ Thus, if $j\in J(t,\xi)$, $A_j(t)$ intersects the cuboid $$\mathop{\Pi}\limits_{i=1}^3\otimes[x_i-r,x_i+r]\times\mathop{\Pi}\limits_{i=1}^3\otimes[\xi_i-C(T),\xi_i+1+C(T)]$$ with $C(T)$ a constant depending on $T$.
Hence, by Lemma~\ref{lemma2}, we have $$card(J(t,\xi))\le C(T)(r+\beta)^3\beta^{-6}.$$
To estimate the first term of (\ref{eq:eqC}), we use Lemmas \ref{lemma3} and \ref{lemma4}. Under the assumption of $f_0$, one concludes
\begin{equation}\label{C1}
\begin{aligned}
&\left\lvert\sum\limits_{j\in I_1\times\mathbb{Z}^3}\mathcal{E}_j(f_0K_r(x,X(t;\cdot,\cdot,0)))\right\rvert\\
\le&C\beta^{s}\mathop{\Pi}\limits_{i=1}^3\Delta{x_i}\Delta{v_i}\sum_{\xi\in\mathbb{Z}^3}\sum_{j\in J(t,\xi)}\vert f_0K_r(x,X(t;x_{j_1},v_{j_2},0))\vert_{s,\infty}\\
\le&C(f_0,\zeta,T)\sum_{\lvert l\rvert=0}^s(\beta^{s}\mathop{\Pi}\limits_{i=1}^3\Delta{x_i}\Delta{v_i}(r+\beta)^3\beta^{-6}\sum_{\xi\in\mathbb{Z}^3}(1+\lvert\xi\rvert)^{-\gamma}\lvert D^{l}k_{r}\rvert)\\
\le&C(f_0,\zeta,T)\beta^{s}(1+\sum_{\lvert l\rvert=1}^s\frac{1}{r^{l-1}}(1+\frac{\beta}{r})^3)\\
\le&C(f_0,\zeta,T)(\frac{\beta}{r})^{s-1}(1+\frac{\beta}{r})^3\beta
\end{aligned}
\end{equation}
Furthermore, employing Lemmas \ref{lemma3} and \ref{lemma5}, we have
\begin{equation}\label{C2}
\begin{aligned}
&\left\lvert\sum\limits_{j\in I_2\times\mathbb{Z}^3}\mathcal{E}_j(f_0K_r(x,X(t;\cdot,\cdot,0)))\right\rvert\\
\le&C\beta^s\lvert f_0K_r(x,X(t;\cdot,\cdot,0))\rvert_{s,1}\\
\le&C(f_0,\zeta)\frac{\beta^s}{r^{s-1}}.
\end{aligned}
\end{equation}
Combing (\ref{C1}) and (\ref{C2}) gives
\begin{equation*}
\begin{aligned}
\lvert \mathcal{C}\rvert\le C(f_0,\zeta)\frac{\beta^s}{r^{s-1}}.
\end{aligned}
\end{equation*}

For term $\mathcal{D}$, due to $\zeta_r\in L^1\cap L^{\infty}$, it has
\begin{align*}
\Vert\mathcal{D}\Vert_{L^{\infty}}&=\sum\limits_{j}\alpha_j\Vert K_{r}(x,X_j(t))-K_{r}(x,X_j^h(t))\Vert_{L^{\infty}}\\
&\le\sum\limits_{j}\alpha_j(\Vert\zeta_{r}(x-X_j(t))-\zeta_{r}(x-X_j^h(t))\Vert_{L^1}+\Vert\zeta_{r}(x-X_j(t))-\zeta_{r}(x-X_j^h(t))\Vert_{L^{\infty}}).
\end{align*}
Consider the support of $\zeta_{r}$, we should have $\vert x-X_j\vert\le r+\max\limits_j\vert X_j^h-X_j\vert$  which implies that $X_j\in B(x,r+\max\limits_j\vert X_j^h-X_j\vert)$.
We denote the set $J'(t,\xi)$ for $$X_j\in B(x,r+\max\limits_j\vert X_j^h-X_j\vert)$$ and $\xi_i\le v_{j_2,i}\le\xi_i+1,i=1,2,3$.
Applied Lemma~\ref{lemma2} again, we have $$card(J'(t,\xi))\le C(T)(r+\beta+\max_j\vert X_j^h-X_j\vert)^{3}\beta^{-6}.$$
Notice that $\Vert\zeta_r\Vert_{L^1}\le C,\Vert\zeta_r\Vert_{L^{\infty}}\le Cr^{-3}$, then we have
\begin{equation*}
\begin{aligned}
&\sum\limits_{\xi\in\mathbb{Z}^3}\sum\limits_{j\in J'(t,\xi)}\alpha_j(\Vert\zeta_{r}(x-X_j(t))-\zeta_{r}(x-X_j^h(t))\Vert_{L^1}+\Vert\zeta_{r}(x-X_j(t))-\zeta_{r}(x-X_j^h(t))\Vert_{L^{\infty}})\\
\le& C(T,f_0,\zeta)\mathop{\Pi}\limits_{i=1}^3\Delta{x_i}\Delta{v_i}(r+\beta+\max_j \lvert X_j^h-X_j\rvert)^{3}\beta^{-6}\sum_{\xi\in\mathbb{Z}^3}(1+\lvert\xi\rvert)^{-\gamma}\left(1+\frac{\max\limits_j\lvert X_j^h-X_j\rvert}{r^3}\right)\\
\le& C(T,f_0,\zeta)(1+\frac{\beta}{r})^3\max_j \lvert X_j^h-X_j\rvert.
\end{aligned}
\end{equation*}

In summary, we obtain the following estimate
 $$\Vert E-E^h\Vert_{L^\infty}\le C(f_0,\zeta,T)(r^k+\frac{\beta^{s}}{r^{s-1}}+\max\limits_j\vert X_j^h-X_j\vert),$$
where we  use the fact $r=O(\beta^{\gamma}),0<\gamma\le 1$.

\end{proof}


\noindent{\bf Remark.} Plasma is a quasi-neutral matter and has Derby shielding effect.
Thus, in practical calculation a sufficiently large number of macro particles are needed to satisfy this property.
This is also the source of assumption (c).
The following figures illustrate the importance of the assumption.
\begin{figure}[h!]
\centering
\subfigure[]{
\includegraphics[scale=.35]{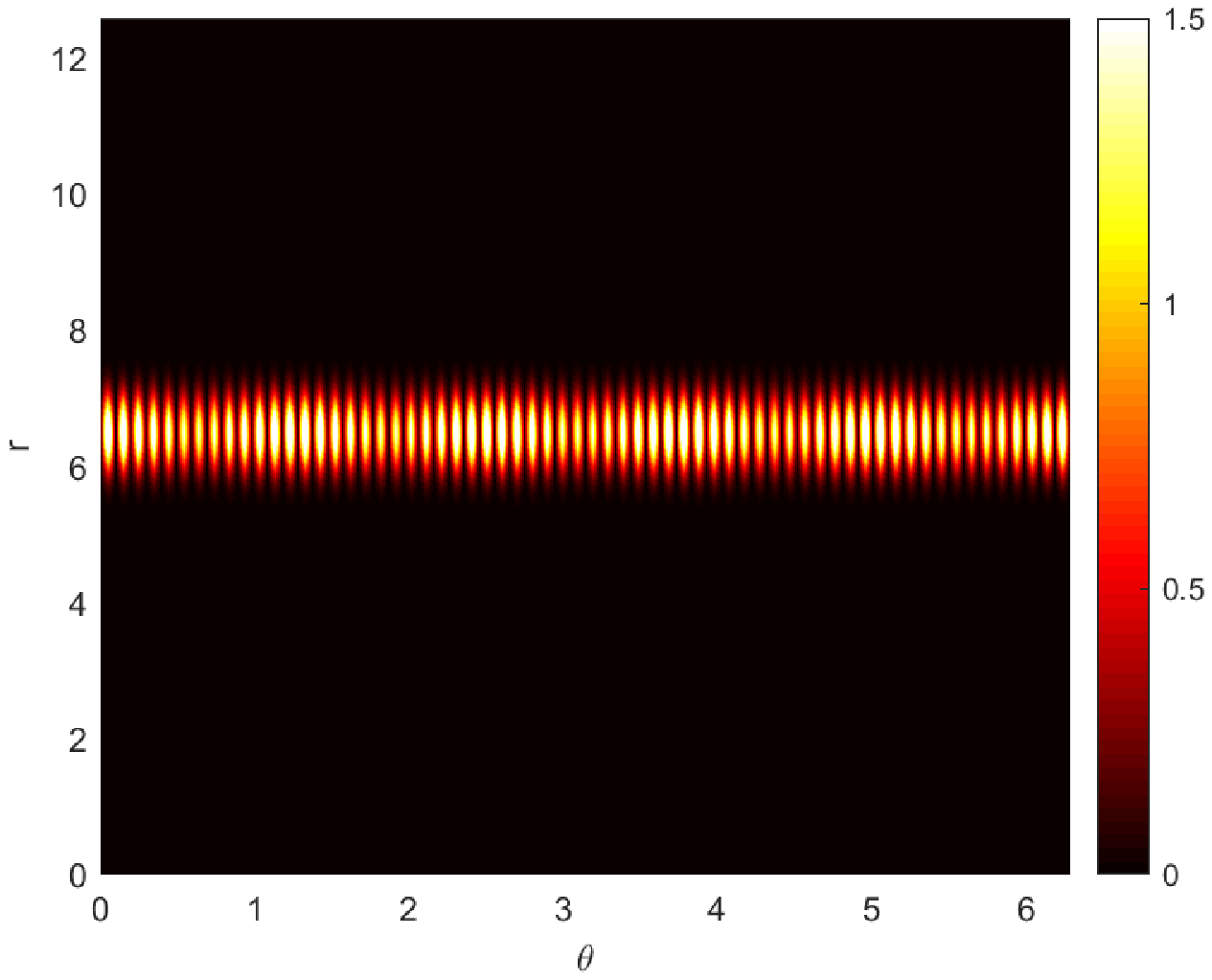}
}
\quad
\subfigure[]{
\includegraphics[scale=.35]{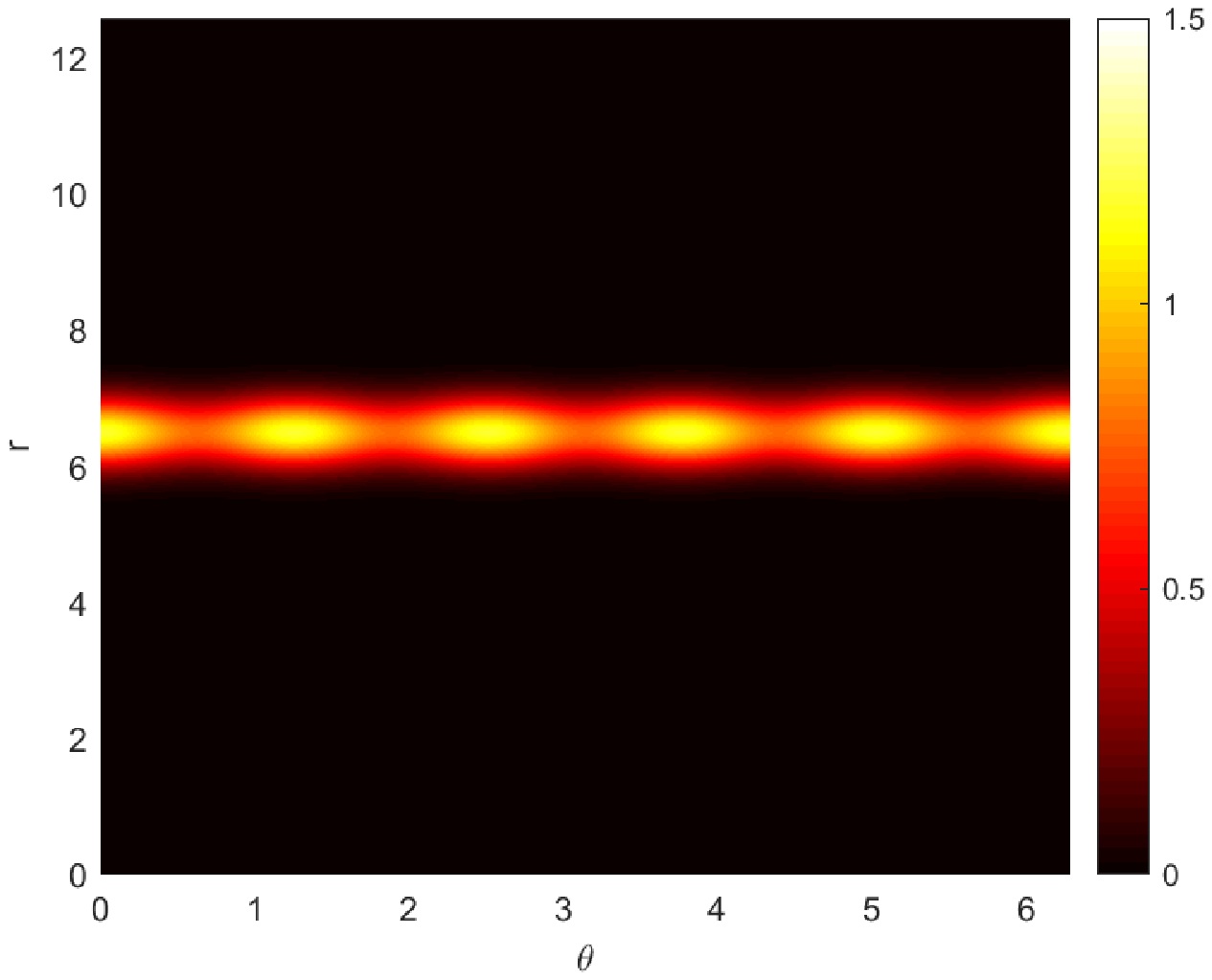}
}
\caption{Influence of macro particles' number on numerical simulation. (a):$r\ll\beta$; (b):$r=O(\beta)$.}
\end{figure}

\begin{theorem}\label{Convergence2}
Under assumption of Theorem~\ref{Convergence}, if $f_0$ has a compact support about $v$ and $E,B\in W^{1,\infty}(\mathbb{R}^{3})$, we have
\begin{equation*}
\begin{aligned}
e^h(t)\le C(f_0,\zeta,E,B,T)(r^k+\frac{\beta^{s}}{r^{s-1}}).
\end{aligned}
\end{equation*}
\end{theorem}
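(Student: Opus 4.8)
The plan is to run a Gronwall argument on the trajectory error $e^h(t)$, exploiting that the initial data coincide, $X_j^h(0)=x_{j_1}=X_j(0)$ and $V_j^h(0)=v_{j_2}=V_j(0)$, so that $e^h(0)=0$. Subtracting the exact characteristic system (\ref{VP1}) from the particle system (\ref{VP2}) gives
$$\frac{d}{dt}(X_j-X_j^h)=V_j-V_j^h,$$
hence $\frac{d}{dt}|X_j-X_j^h|\le|V_j-V_j^h|\le e^h(t)$, together with
$$\frac{d}{dt}(V_j-V_j^h)=\big(E(X_j)-E^h(X_j^h)\big)+\big(V_j\times B(X_j)-V_j^h\times B(X_j^h)\big).$$
The first step is to split the electric contribution as $E(X_j)-E^h(X_j^h)=\big(E(X_j)-E(X_j^h)\big)+\big(E(X_j^h)-E^h(X_j^h)\big)$. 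Since $E\in W^{1,\infty}$, the first piece is Lipschitz and bounded by $\|E\|_{W^{1,\infty}}|X_j-X_j^h|\le C\,e^h(t)$, while the second piece is at most $\|E-E^h\|_{L^\infty}$, which Theorem~\ref{Convergence} controls by $C(f_0,\zeta,T)\big(r^k+\beta^s/r^{s-1}+\max_j|X_j^h-X_j|\big)$; the last term here is itself $\le e^h(t)$ and can be absorbed into the linear part of the inequality.

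Next I would treat the magnetic contribution through the splitting
$$V_j\times B(X_j)-V_j^h\times B(X_j^h)=(V_j-V_j^h)\times B(X_j)+V_j^h\times\big(B(X_j)-B(X_j^h)\big),$$
whose first term is bounded by $\|B\|_{L^\infty}e^h(t)$ and whose second term is bounded by $|V_j^h|\,\|B\|_{W^{1,\infty}}|X_j-X_j^h|$. This forces a uniform velocity bound, which is exactly where the hypothesis that $f_0$ has compact support in $v$ enters: the initial velocities $v_{j_2}$ then lie in a fixed ball, and since the magnetic force does no work, $\frac{d}{dt}|V_j|^2=2V_j\cdot E(X_j)$, so $\frac{d}{dt}|V_j|\le\|E\|_{L^\infty}$ and $|V_j(t)|\le C_V(T)$ on $[0,T]$. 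Consequently $|V_j^h(t)|\le C_V(T)+e^h(t)$, and the second magnetic term is at most $\big(C_V(T)+e^h(t)\big)\|B\|_{W^{1,\infty}}e^h(t)$.

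Collecting all estimates and taking the maximum over $j$ yields a differential inequality of the form
$$\frac{d}{dt}e^h(t)\le C_1\,e^h(t)+C_2\,(e^h(t))^2+C_3\Big(r^k+\frac{\beta^s}{r^{s-1}}\Big),$$
with $C_1,C_2,C_3$ depending only on $f_0,\zeta,E,B,T$. The quadratic term is the one delicate point, and I would dispose of it by a continuation (bootstrap) argument: on the maximal subinterval where $e^h(t)\le 1$ the inequality linearizes to $\frac{d}{dt}e^h\le (C_1+C_2)e^h+C_3(r^k+\beta^s/r^{s-1})$, so Gronwall together with $e^h(0)=0$ gives $e^h(t)\le C(f_0,\zeta,E,B,T)(r^k+\beta^s/r^{s-1})$; since the right-hand side tends to $0$ under assumption (c), this stays below the threshold for $\beta$ small, closing the bootstrap on all of $[0,T]$. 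The main obstacle is precisely the self-referential coupling: the field error $\|E-E^h\|_{L^\infty}$ depends on the trajectory error through $\max_j|X_j^h-X_j|$, and the inhomogeneous magnetic field produces a quadratic feedback, so the argument only closes once the uniform velocity bound and the continuation step are in place.
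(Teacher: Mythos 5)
Your argument is the standard Gronwall proof for a fixed, $O(1)$ magnetic field, but this theorem sits in the maximal ordering scaling announced at the start of the error subsection: the characteristics in (\ref{VP1})--(\ref{VP2}) carry the field $\frac{B(\varepsilon x)}{\varepsilon}$, and the constant $C(f_0,\zeta,E,B,T)$ in the statement is meant to be uniform in $\varepsilon$ (this uniformity is what the later corollary, where only the time-discretization error carries a $h/\varepsilon$, relies on). That is exactly where your estimate breaks: your bound $\lvert (V_j-V_j^h)\times B\rvert\le \Vert B\Vert_{L^\infty}e^h(t)$ becomes $\frac{1}{\varepsilon}\Vert B\Vert_{L^\infty}e^h(t)$ under the scaling, so the linear coefficient in your differential inequality is $O(1/\varepsilon)$ and Gronwall produces a factor $e^{CT/\varepsilon}$, not a constant of the claimed form. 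The paper's proof is built around this obstruction: it isolates the dominant constant rotation by writing $\dot{w}=w\times\frac{B(0)}{\varepsilon}+g$ (equation (\ref{eq:eqw})), solves that part exactly by variation of constants, $w(t)=\int_0^t e^{(t-s)\hat{B}(0)/\varepsilon}g(s)\,ds$, and uses that the propagator $e^{(t-s)\hat{B}(0)/\varepsilon}$ is orthogonal, so the strong rotation contributes nothing to the growth of $\lvert w\rvert$. The remaining terms in $g$ are then bounded uniformly in $\varepsilon$ precisely because of the scaling structure: $\frac{1}{\varepsilon}\lvert B(\varepsilon X_j)-B(\varepsilon X_j^h)\rvert\le \Vert B\Vert_{W^{1,\infty}}\lvert z\rvert$ (the Lipschitz constant of $x\mapsto B(\varepsilon x)$ is $O(\varepsilon)$, cancelling the $1/\varepsilon$), and $\frac{1}{\varepsilon}\lvert B(\varepsilon X_j)-B(0)\rvert\le \Vert B\Vert_{W^{1,\infty}}\lvert X_j\rvert\le C(T)$. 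This filtering step is the key idea missing from your proposal; without it the claimed estimate cannot be reached in the regime the theorem addresses.

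The rest of your structure does parallel the paper: the splitting of the electric-field error and the absorption of the $\max_j\lvert X_j-X_j^h\rvert$ term from Theorem~\ref{Convergence} into the Gronwall inequality are the same (the paper routes the splitting through $E^h(X_j)$ rather than $E(X_j^h)$, which is immaterial), and the compact velocity support enters for the same reason. Two smaller remarks: your uniform bound on $\lvert V_j\rvert$ via the fact that the magnetic force does no work is valid also for the scaled field and is more explicit than the paper, which only asserts the resulting Lipschitz bounds; and your quadratic term $(e^h)^2$ with the bootstrap can be avoided altogether by applying that same work argument directly to $V_j^h$, using $\Vert E^h\Vert_{L^\infty}\le\Vert E\Vert_{L^\infty}+\Vert E-E^h\Vert_{L^\infty}$, which keeps the inequality linear as in the paper.
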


\begin{proof}
Fix $j$ and denote $z=X_j-X_j^h$, $w=V_j-V_j^h$, we have
\begin{align}
\label{eq:eqz}\dot{z}&=w\\
\notag\dot{w}&=E(X_j)-E^h(X_j^h)+V_j\times\frac{B(\varepsilon X_j)}{\varepsilon}-V_j^h\times\frac{B(\varepsilon X_j^h)}{\varepsilon}\\
\label{eq:eqw}&=w\times\frac{B(0)}{\varepsilon}+g
\end{align}
where 
\begin{align*}
g&=E(X_j)-E^h(X_j)+E^h(X_j)-E^h(X_j^h)\\
&+\frac{1}{\varepsilon}V_j^h\times(B(\varepsilon X_j)-B(\varepsilon X_j^h))+\frac{1}{\varepsilon}w\times (B(\varepsilon X_j)-B(0)).
\end{align*}
Integrating (\ref{eq:eqz}) w.r.t $t$ gives
\begin{equation}\label{eq:eqz1}
z(t)=\int_0^t w(s)ds.
\end{equation}
Applying the constant-variation method to (\ref{eq:eqw}) leads to
\begin{equation}\label{eq:eqw1}
w(t)=\int_0^t e^{(t-s)\hat{B}(0)/\varepsilon}g(s)ds.
\end{equation}
Due to that $E$ and $B$ is Lipschitz continuous, we have
\begin{equation*}
\begin{aligned}
&\lvert E^h(X_j)-E^h(X_j^h)\rvert\le C(E)\lvert z(s)\rvert,\\
&\left\vert\frac{1}{\varepsilon} V_j^h(s)\times(B(\varepsilon X_j(s))-B(\varepsilon X_j^h(s)))\right\vert\le C(B)\vert z(s)\vert,\\
&\left\vert\frac{1}{\varepsilon} w(s)\times (B(\varepsilon X_j(s))-B(0))\right\vert\le C(B)\vert w(s)\vert.
\end{aligned}
\end{equation*}

It follows from (\ref{eq:eqz1}) and (\ref{eq:eqw1}) that
\begin{equation*}
\begin{aligned}
\vert z(t)\vert+\vert w(t)\vert\le\int_0^t\Vert E(X_j(s))-E^h(X_j(s))\Vert_{L^{\infty}}ds+C(E,B)\int_0^t(\vert z(s)\vert+\vert w(s)\vert)ds.
\end{aligned}
\end{equation*}

By Gronwall's inequality, we obtain
\begin{equation*}
\begin{aligned}
e^h(t)&\le\int_0^t\Vert E(X_j(s))-E^h(X_j(s))\Vert_{L^{\infty}}ds+C(E,B)\int_0^t e^h(s)ds\\
&\le C(f_0,\zeta,E,B,T)(r^k+\frac{\beta^{s}}{r^{s-1}}).
\end{aligned}
\end{equation*}
\end{proof}

\noindent{\bf Remark.} If $f_0$ is not compact supported, we will choose the appropriate computational domain in practical calculations.
Supposing that we choose $\vert v_{j_2}\vert\le M$ for a given $M$, then $$\Vert E-E^h\Vert_{L^\infty}\le C(f_0,\zeta,T)\left(r^k+\frac{\beta^{s}}{r^{s-1}}+\max\limits_j\vert X_j^h-X_j\vert+\frac{1}{M^{\mu-1}}\right).$$
This can be derived from the assumption (a).

With above discussion, we have the following result.
\begin{theorem}\label{TH1}
Assume the conditions in Theorem ~\ref{Convergence2} are satisfied, then for all $\psi\in C_0^{\infty}$, $r\to 0$,
\begin{equation*}
\begin{aligned}
\int_{\mathbb{R}^{3}}\int_{\mathbb{R}^{3}}(f-f^h)\psi dxdv\to 0.
\end{aligned}
\end{equation*}
\end{theorem}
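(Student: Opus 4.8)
The plan is to establish weak convergence by comparing the particle approximation with the exact characteristics and splitting the resulting error into a quadrature part and a trajectory part. Recalling $f^h(x,v,t)=\sum_j\alpha_j\delta(x-X_j^h(t))\delta(v-V_j^h(t))$ and that $X_j(t)=X(t;x_{j_1},v_{j_2},0)$, $V_j(t)=V(t;x_{j_1},v_{j_2},0)$ are the exact trajectories issued from the grid centers, I would write
\[
\int_{\mathbb{R}^3}\int_{\mathbb{R}^3}(f-f^h)\psi\,dxdv=\mathcal{I}+\mathcal{II},
\]
with
\[
\mathcal{I}=\int_{\mathbb{R}^3}\int_{\mathbb{R}^3} f\psi\,dxdv-\sum_j\alpha_j\psi(X_j(t),V_j(t)),\quad \mathcal{II}=\sum_j\alpha_j\bigl(\psi(X_j(t),V_j(t))-\psi(X_j^h(t),V_j^h(t))\bigr).
\]

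For $\mathcal{I}$ I would use the representation (\ref{sol1}) and the volume preservation (\ref{sol2}): the change of variables $(x,v)=(X(t;y,w,0),V(t;y,w,0))$ recasts $\int_{\mathbb{R}^3}\int_{\mathbb{R}^3} f\psi\,dxdv$ as $\int_{\mathbb{R}^3}\int_{\mathbb{R}^3} f_0(y,w)\,\psi(X(t;y,w,0),V(t;y,w,0))\,dydw$. Since $\alpha_j=f_0(x_{j_1},v_{j_2})\prod_{i}\Delta x_i\Delta v_i$ and $(x_{j_1},v_{j_2})$ is the midpoint of $A_j$, the sum $\sum_j\alpha_j\psi(X_j(t),V_j(t))$ is precisely the midpoint quadrature of this integral, so that $\mathcal{I}=\sum_j\mathcal{E}_j(g)$ with $g(y,w)=f_0(y,w)\,\psi(X(t;y,w,0),V(t;y,w,0))$. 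Lemma~\ref{lemma3} then gives $\lvert\mathcal{I}\rvert\le C\beta^{m}$ for a suitable integer $m\ge 1$, hence $\mathcal{I}\to 0$ as $\beta\to 0$.

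For $\mathcal{II}$, the Lipschitz continuity of $\psi\in C_0^\infty$ yields $\lvert\mathcal{II}\rvert\le \Vert\nabla\psi\Vert_{L^\infty}\,e^h(t)\sum_j\alpha_j$, where the total weight $\sum_j\alpha_j$ is a Riemann sum for $\Vert f_0\Vert_{L^1}$ and hence bounded uniformly in $\beta$. Invoking Theorem~\ref{Convergence2} gives $\lvert\mathcal{II}\rvert\le C(f_0,\psi,E,B,T)(r^k+\beta^s/r^{s-1})$. By assumption (c), $r=O(\beta^\gamma)$ with $0<\gamma\le 1$, so $r^k\to 0$ and $\beta^s/r^{s-1}=O(\beta^{\,s-\gamma(s-1)})$ with exponent $s-\gamma(s-1)\ge 1>0$ because $\gamma\le 1<s/(s-1)$; therefore $\mathcal{II}\to 0$ as well.

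The step I expect to be the main obstacle is controlling the Sobolev seminorm of the composite integrand $g=f_0\cdot(\psi\circ\Phi_t)$ in $\mathcal{I}$, where $\Phi_t\colon(y,w)\mapsto(X(t;y,w,0),V(t;y,w,0))$ is the exact flow. Lemma~\ref{lemma3} requires $g\in W^{m,p}$, which forces $\Phi_t$ to be $C^m$ with derivatives bounded on $[0,T]$; this regularity is obtained from the variational equations associated with (\ref{VP1}) via Gr\"onwall estimates, using the smoothness of $f_0$ and $B$ assumed in Theorem~\ref{Convergence} (the same hypotheses underpinning Lemma~\ref{lemma2}). The compact $v$-support of $f_0$ and the compact support of $\psi$ ensure that only finitely many terms of $\sum_j\mathcal{E}_j(g)$ are nonzero, so the volume-weighted sum of seminorms in Lemma~\ref{lemma3} stays $O(1)$ and the factor $\beta^m$ drives $\mathcal{I}$ to zero. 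With both estimates in place, letting $\beta\to 0$ (and thus $r\to 0$) completes the argument.
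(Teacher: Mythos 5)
Your proposal is correct and coincides with the paper's own (largely implicit) argument: the paper states Theorem~\ref{TH1} without a written proof, offering it as a direct consequence of the preceding results, and your decomposition $\mathcal{I}+\mathcal{II}$ — a midpoint-quadrature error handled by the change of variables (\ref{sol1})--(\ref{sol2}) and Lemma~\ref{lemma3} (the same device used for term $\mathcal{C}$ in the proof of Theorem~\ref{Convergence}), plus a trajectory error bounded by $\Vert\nabla\psi\Vert_{L^\infty}\,e^h(t)$ and Theorem~\ref{Convergence2} — is exactly how those results combine. Your closing remark on the flow regularity needed for the Sobolev seminorms is also consistent with the paper, which makes the same implicit smoothness assumptions in estimating term $\mathcal{C}$.
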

That is, the discrete distribution $f^h$ converges weakly to the distribution function $f$.

\section{Hamiltonian method of charged-particle equation}
In this section, we discuss the convergence results for fully discretized system by Hamiltonian particle methods under the maximal ordering scaling.
To simplify the notation, we ignore the subscript or superscript, and consider the following charged-particle dynamics under a strong magnetic field.
\begin{equation}
\begin{aligned}
&\dot{x}=v,\\
&\dot{v}=v\times\frac{B(\varepsilon x)}{\varepsilon}+E^h(x),\quad 0<t\le T,\\
&x(0)=x_0,\quad v(0)=v_0.
\end{aligned}
\label{CPD}
\end{equation}

\subsection{Recall of Hamiltonian method}

For spatial discretization, we use finite element methods to solve the Poisson equation (\ref{PEM}) with the Dirichlet boundary condition.
The approximate problem is to find $\phi_{h}\in V_{h}\subset H_{0}^{1}(\Omega_{x})$
such that
\begin{equation*}
\left(\nabla\phi_{h},\mathbf{\nabla}\psi_{h}\right)=\left<\rho^h,\psi_{h}\right>,\quad\forall \psi_{h}\in V_{h},
\end{equation*}
where $\rho^h=\sum\limits_j\alpha_j\zeta_{r}(x-X_j^h)-\rho_0$.
The inner product is defined by $\left(f,g\right)=\int_{\Omega_{x}}f\cdot g dx$ on space $L^{2}(\Omega_{x})$ and $V_{h}$ is the finite-dimensional subspace. In particular, for all $\psi_{h}\in H_{0}^{1}(\Omega_{x})$ and $\rho^h\in L^{2}(\Omega_{x})\subset H^{-1}(\Omega_{x})$, we have $\left(\rho^h,\psi_{h}\right)=\left<\rho^h,\psi_{h}\right>$,
and the solution to the variational problem is unique.
Consider the spatial discretization, we need to express
\begin{equation*}
E^h-\tilde{E}=E^h+\nabla\Pi\phi^h.
\end{equation*}
Here, $\Pi$ is the projection to the finite element space and $\tilde{E}:=-\nabla\Pi\phi^h$. We can use the following estimates of conforming finite element.
\begin{lemma}\label{FEM}
Suppose that the grid is a regular mesh\footnote{The more details about the regular mesh can be found in~\cite{Brenner2008}.} and $\phi^h\in H^{k+1}(\Omega_x)\cap H_0^1(\Omega_x)$, for $k$-th order finite element method, we have
\begin{equation*}
\begin{aligned}
\Vert\phi^h-\Pi\phi^h\Vert_{1,\Omega_x}\le Ch_x^k\vert\phi^h\vert_{k+1,\Omega_x},
\end{aligned}
\end{equation*}
where $h_x$ is the size of gird.
\end{lemma}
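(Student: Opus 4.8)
The plan is to prove Lemma~\ref{FEM} as the classical conforming finite element interpolation estimate, reading $\Pi$ as the local projection onto $V_h$ that reproduces, on each cell, all polynomials of degree at most $k$ (if $\Pi$ is instead the Ritz/elliptic projection, C\'ea's lemma reduces its error to the best-approximation error, which this very bound controls, so the same estimate results). Since $V_h\subset H_0^1(\Omega_x)$ consists of piecewise polynomials over a triangulation $\mathcal{T}_h$ of $\Omega_x$, the broken norm splits as
\[
\Vert\phi^h-\Pi\phi^h\Vert_{1,\Omega_x}^2=\sum_{K\in\mathcal{T}_h}\Vert\phi^h-\Pi\phi^h\Vert_{1,K}^2,
\]
so it suffices to bound each local contribution $\Vert\phi^h-\Pi\phi^h\Vert_{1,K}$ by the standard transport-to-reference-cell argument.

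On a cell $K$ I would pull back to the reference cell $\hat K$ through the affine map $F_K(\hat x)=B_K\hat x+b_K$, writing $\hat w=w\circ F_K$; for an affine-equivalent element family the pullback commutes with the projection, so $\widehat{\Pi\phi^h}=\hat\Pi\hat\phi$. Because $\hat\Pi$ fixes every polynomial $p$ of degree at most $k$,
\[
\Vert\hat\phi-\hat\Pi\hat\phi\Vert_{1,\hat K}=\Vert(\hat\phi-p)-\hat\Pi(\hat\phi-p)\Vert_{1,\hat K}\le C\,\Vert\hat\phi-p\Vert_{k+1,\hat K}.
\]
Taking the infimum over such $p$ and applying the Bramble--Hilbert lemma on the fixed domain $\hat K$ yields $\Vert\hat\phi-\hat\Pi\hat\phi\Vert_{1,\hat K}\le C\,\vert\hat\phi\vert_{k+1,\hat K}$, with $C$ depending only on $\hat K$ and $k$.

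Finally I would transfer this back to $K$ using the standard affine scaling inequalities $\vert\hat w\vert_{m,\hat K}\le C\Vert B_K\Vert^{m}\vert\det B_K\vert^{-1/2}\vert w\vert_{m,K}$ together with their inverses. Combining the scaling of the left-hand $H^1$ term with that of the right-hand $H^{k+1}$ seminorm, the volume factors $\vert\det B_K\vert^{\pm1/2}$ cancel and one is left with
\[
\Vert\phi^h-\Pi\phi^h\Vert_{1,K}\le C\,\Vert B_K^{-1}\Vert\,\Vert B_K\Vert^{k+1}\,\vert\phi^h\vert_{k+1,K}.
\]
Squaring, summing over $K\in\mathcal{T}_h$, and using $h_K\le h_x$ then produces the claimed global estimate.

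The main obstacle is purely the bookkeeping of the $h_K$-powers in this last step: the determinant from the change of variables and the operator-norm factors $\Vert B_K\Vert$, $\Vert B_K^{-1}\Vert$ from differentiation must be tracked consistently on both sides so that exactly $h_K^{k}$ survives. This is precisely where the regular-mesh (shape-regularity) hypothesis is indispensable, since it guarantees $\Vert B_K\Vert\le Ch_K$ and $\Vert B_K^{-1}\Vert\le Ch_K^{-1}$ uniformly in $K$, whence $\Vert B_K^{-1}\Vert\Vert B_K\Vert^{k+1}\le Ch_K^{k}$ with $C$ independent of $K$ and of $h_x$. Without shape-regularity both the correct exponent and the uniformity of the constant would fail.
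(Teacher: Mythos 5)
Your proof is correct and follows essentially the same route the paper relies on: the paper states Lemma~\ref{FEM} without proof, deferring to the standard reference~\cite{Brenner2008}, and your argument (reduction via C\'ea's lemma or direct interpolation, affine pullback to the reference cell, Bramble--Hilbert, and shape-regular scaling giving $\Vert B_K^{-1}\Vert\,\Vert B_K\Vert^{k+1}\le Ch_K^{k}$) is exactly the classical proof found there. Your explicit handling of both readings of $\Pi$ is apt, since in the paper $\Pi\phi^h$ is the Galerkin (Ritz) solution defining $\tilde{E}=-\nabla\Pi\phi^h$, so the C\'ea reduction step is the one actually needed.
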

When $B$ is constant, we split the Hamiltonian as
\begin{align*}
 & H=H_v+H_e, \quad
  \text{where~} H_v=\frac{1}{2} \sum_{j} \alpha_j (V_j^h)^2, \quad H_e=\frac{1}{2}\int \tilde{E}\cdot\tilde{E} dx.
\end{align*}
With each part of the Hamiltonian, we can split the system into two parts, and each subsystem possesses the Poisson bracket structure. Thus the composition of solutions to each subsystem preserves the Poisson bracket of the original system due the group property of Poisson bracket structure.
If $B$ is nonhomogeneous, we need to split the kinetic component further due to Cartesian coordinate.

For two arbitrary functions $F$ and $G$ of $X_j^h,V_j^h$, the discrete Poisson bracket from the continuous bracket (\ref{eq:MMWB}) reads
\begin{equation*}
\begin{aligned}
\left\{ F,G\right\}
&=\sum_{j}\frac{1}{\alpha_{j}}\left(\frac{\partial F}{\partial X_j^h}\cdot\frac{\partial G}{\partial V_j^h}-\frac{\partial G}{\partial X_j^h}\cdot\frac{\partial F}{\partial V_j^h}\right)\\
&+\sum_{j} \frac{1}{\varepsilon\alpha_{j}}{B}(\varepsilon X_j^h)\cdot\left(\frac{\partial F}{\partial V_j^h} \times\frac{\partial G}{\partial V_j^h}\right).
\end{aligned}
\end{equation*}
Associated with the Hamiltonian
$H_{v}$, the subsystem is $\dot{F}=\left\{ F,H_{v}\right\} $. For the $j$-th particle it is,
\begin{equation}
\begin{aligned}
& \dot{X}_{j}^h=V_{j}^h,\\
& \dot{V}_{j}^h=\frac{1}{\varepsilon}V_{j}^h\times B(\varepsilon X_{j}^h).
\end{aligned}
\label{eq:dSysHE}
\end{equation}
The equation $\dot{F}=\left\{ F,H_{e}\right\} $ associated with the Hamiltonian $H_{e}$ is
\begin{equation}
\begin{aligned}
& \dot{X}_{j}^h=0,\\
& \dot{V}_{j}^h=\tilde{E}(X_{j}^h).
\end{aligned}
\label{eq:dSysHB}
\end{equation}
With the given exact solutions to the subsystems,
Poisson integrators can be derived by the compositions of the sub-flows~(\ref{eq:dSysHE}) and (\ref{eq:dSysHB}).

Following the idea similar to it in~\cite{ZXF2021Error}, we establish the convergence result for our numerical scheme.
We introduce the time rescaling $$\tau:=\frac{t}{\varepsilon},\quad z(\tau):=x(t),\quad w(\tau):=v(t),\quad 0\le \tau\le \frac{T}{\varepsilon}.$$
Then equation (\ref{CPD}) turns into a long-time problem
\begin{equation}
\begin{aligned}
&\dot{z}(\tau)=\varepsilon w(\tau),\\
&\dot{w}(\tau)=w(\tau)\times B(\varepsilon z(\tau))+\varepsilon E^h(z(\tau)),\quad 0<\tau\le \frac{T}{\varepsilon},\\
&z(0)=x_0,\quad w(0)=v_0.
\end{aligned}
\label{CPD2}
\end{equation}
We denote the step size $\tilde{h}=\Delta{\tau}$, time grids $\tau_n=n\tilde{h}$, and the numerical solutions $$z^n\approx z(\tau_n),\quad w^n\approx w(\tau_n),\quad z^0=x_0,\quad w^0=v_0.$$
Moreover, we use $T_0$ to denote the single period of $e^{\tau\hat{B}(0)}$
where
\begin{equation}\label{hatB}
\hat{B}=\left(
            \begin{array}{ccc}
              0 & B_z & -B_y \\
              -B_z & 0 & B_x \\
              B_y & -B_x & 0 \\
            \end{array}
          \right)
\end{equation}
for $B=(B_x,B_y,B_z)$.

Noting that $\vert B(x)\vert=b(x)$, we first consider the following fully discrete scheme
\begin{equation}
\begin{aligned}
z^{n+1}&=z^n+\varepsilon\int_0^{\tilde{h}} e^{s\hat{B}(\varepsilon z^n)}ds\,w^n\\
&=z^n+\varepsilon\left(\tilde{h}I+\frac{1-\cos(\tilde{h}b)}{b^2}\hat{B}(\varepsilon z^n)+\frac{\tilde{h}b-\sin{\tilde{h}b}}{b^3}\hat{B}^2(\varepsilon z^n)\right)w^n,\\
w^{n+1}&=e^{\tilde{h}\hat{B}(\varepsilon z^n)}w^n+\varepsilon \tilde{h}\tilde{E}(z^{n+1})\\
&=\left(I+\frac{\sin{\tilde{h}b}}{b}\hat{B}(\varepsilon z^n)+\frac{1-\cos{\tilde{h}b}}{b^2}\hat{B}^2(\varepsilon z^n))\right)w^n+\varepsilon\tilde{h}\tilde{E}(z^{n+1}).
\end{aligned}
\label{SCPD}
\end{equation}

Consider the truncated system of (\ref{CPD2}) as
\begin{equation}
\begin{aligned}
&\dot{\tilde{z}}^n(s)=\varepsilon \tilde{w}^n(s),\quad 0<s\le\tilde{h},\\
&\dot{\tilde{w}}^n(s)=\tilde{w}^n(s)\times B(\varepsilon z(\tau_n))+\varepsilon E^h(\tilde{z}^n(s)),\\
&\tilde{z}^n(0)=z(\tau_n),\quad \tilde{w}^n(0)=w(\tau_n).
\end{aligned}
\label{CPDl}
\end{equation}
By denoting the differences $$\eta_z^n(s):=z(\tau_n+s)-\tilde{z}^n(s),\quad\eta_w^n(s):=w(\tau_n+s)-\tilde{w}^n(s),\quad 0\le n<\frac{T}{\varepsilon\tilde{h}},$$
equations (\ref{CPD2})-(\ref{CPDl}) leads to
\begin{equation}
\begin{aligned}
&\dot{\eta}_z^n(s)=\varepsilon \eta_w^n(s),\\
&\dot{\eta}_w^n(s)=\eta_w^n(s)\times B(\varepsilon z(\tau_n))+\varepsilon E^h(z(\tau_n+s))-\varepsilon E^h(\tilde{z}^n(s))+\xi_0^n(s),\\
&\eta_z^n(0)=\eta_w^n(0)=0,
\end{aligned}
\label{CPDel}
\end{equation}
where $\xi_0^n(s)=w(\tau_n+s)\times(B(\varepsilon z(\tau_n+s))-B(\varepsilon z(\tau_n)))$.
Through Taylor expansion and the variation-of-constants formula,we can get the following result.
\begin{lemma}\label{eta}
\cite{ZXF2021Error}\, If $E^h\in C^1(\mathbb{R}^{3})$ and $B\in W^{1,\infty}(\mathbb{R}^{3})$, we can have
\begin{equation}\label{tre}
\vert\eta_z^n(\tilde{h})\vert\le C\varepsilon^3{\tilde{h}}^3,\quad \vert\eta_w^n(\tilde{h})\vert\le C\varepsilon^2{\tilde{h}}^2.
\end{equation}
\end{lemma}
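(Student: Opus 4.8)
The plan is to work directly with the error system (\ref{CPDel}), which I read as a linear rotation equation for $\eta_w^n$ forced by two small terms — the electric-field mismatch $\varepsilon(E^h(z(\tau_n+s))-E^h(\tilde{z}^n(s)))$ and the source $\xi_0^n(s)$ — coupled to $\eta_z^n$ through $\dot{\eta}_z^n=\varepsilon\eta_w^n$. I would then close a coupled integral inequality for $(\eta_z^n,\eta_w^n)$ by variation-of-constants followed by a short-interval Gronwall argument. The two target orders $\varepsilon^3\tilde{h}^3$ and $\varepsilon^2\tilde{h}^2$ will come from the two powers of $\varepsilon$ already visible on the right-hand side of (\ref{CPDel}) together with the $s$-integrations across the step $[0,\tilde{h}]$.

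First I would record an a priori bound $|w(\tau)|\le C$ on the whole rescaled interval. Since $w\times B(\varepsilon z)$ is orthogonal to $w$, the energy identity gives $\tfrac{d}{d\tau}|w|\le\varepsilon\|E^h\|_{\infty}$ on the (compact) trajectory region, so $|w|$ increases by at most $\varepsilon\|E^h\|_{\infty}\cdot(T/\varepsilon)=T\|E^h\|_{\infty}$ over $[0,T/\varepsilon]$, and $E^h\in C^1$ makes the relevant sup norms finite. Consequently $|z(\tau_n+s)-z(\tau_n)|=\big|\int_0^s\varepsilon w(\tau_n+\sigma)\,d\sigma\big|\le C\varepsilon s$, and the Lipschitz continuity of $B$ (from $B\in W^{1,\infty}$) yields $|B(\varepsilon z(\tau_n+s))-B(\varepsilon z(\tau_n))|\le C\varepsilon^2 s$. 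Hence the source obeys
\begin{equation*}
|\xi_0^n(s)|\le C\varepsilon^2 s.
\end{equation*}

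Next I would apply variation-of-constants to the $\eta_w^n$-equation. Its homogeneous propagator $e^{(s-\sigma)\hat{B}(\varepsilon z(\tau_n))}$ is a rotation, hence orthogonal and norm-preserving, so with $\eta_w^n(0)=0$,
\begin{equation*}
|\eta_w^n(s)|\le\int_0^s\Big(\varepsilon\,|E^h(z(\tau_n+\sigma))-E^h(\tilde{z}^n(\sigma))|+|\xi_0^n(\sigma)|\Big)\,d\sigma.
\end{equation*}
Using the local Lipschitz bound $|E^h(z(\tau_n+\sigma))-E^h(\tilde{z}^n(\sigma))|\le C|\eta_z^n(\sigma)|$ for $E^h\in C^1$, the source estimate above, and $|\eta_z^n(s)|\le\varepsilon\int_0^s|\eta_w^n(\sigma)|\,d\sigma$ from the $\eta_z^n$-equation, I obtain the coupled inequalities
\begin{equation*}
|\eta_w^n(s)|\le C\varepsilon^2 s^2+C\varepsilon\int_0^s|\eta_z^n(\sigma)|\,d\sigma,\qquad |\eta_z^n(s)|\le\varepsilon\int_0^s|\eta_w^n(\sigma)|\,d\sigma.
\end{equation*}

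Finally I would substitute the second inequality into the first to get $|\eta_w^n(s)|\le C\varepsilon^2 s^2+C\varepsilon^2\int_0^s\!\int_0^\sigma|\eta_w^n(\sigma')|\,d\sigma'\,d\sigma$. Writing $M(s)=\sup_{0\le\sigma\le s}|\eta_w^n(\sigma)|$ and bounding the double integral by $M(s)s^2/2$, I arrive at $M(s)\big(1-C\varepsilon^2 s^2\big)\le C\varepsilon^2 s^2$; because $\varepsilon\tilde{h}$ is bounded, for $s\le\tilde{h}$ the prefactor stays bounded below by a positive constant, so $|\eta_w^n(\tilde{h})|\le C\varepsilon^2\tilde{h}^2$. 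Feeding this back through $|\eta_z^n(\tilde{h})|\le\varepsilon\int_0^{\tilde{h}}|\eta_w^n(\sigma)|\,d\sigma$ produces $|\eta_z^n(\tilde{h})|\le C\varepsilon^3\tilde{h}^3$, as claimed. I expect the main difficulty to be the bookkeeping of $\varepsilon$-powers through this coupled step: the rotation does not decay, so one cannot gain smallness from contractivity, and it is precisely the orthogonality of $e^{s\hat{B}}$ — not any damping — that keeps each forcing contribution at the order needed to hit $\varepsilon^2\tilde{h}^2$ and $\varepsilon^3\tilde{h}^3$.
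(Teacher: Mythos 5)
Your proof is correct and follows essentially the same route as the paper, which simply invokes Taylor expansion and the variation-of-constants formula for the frozen-field error system (\ref{CPDel}) and defers the details to \cite{ZXF2021Error}: you exploit the orthogonality of the propagator $e^{s\hat{B}(\varepsilon z(\tau_n))}$, the bound $|\xi_0^n(s)|\le C\varepsilon^2 s$ from the Lipschitz continuity of $B$, and a Gronwall-type closing of the coupled $(\eta_z^n,\eta_w^n)$ inequalities. The powers $\varepsilon^2\tilde{h}^2$ and $\varepsilon^3\tilde{h}^3$ come out exactly as in the cited argument, so your write-up is a valid self-contained substitute for the paper's citation-based proof.
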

And it is easy to check that the magnetic field $\frac{B(\varepsilon x)}{\varepsilon}$ has the following properties.
\begin{lemma}\label{mang}
The matrix (\ref{hatB}) w.r.t. $B$ satisfies
\begin{equation*}
\hat{B}^3=-b^2\hat{B},\quad (B\cdot y)B=\hat{B}^2 y+b^2 y
\end{equation*}
for any vector $y\in\mathbb{R}^{3}$.

If $B\in W^{1,\infty}(\mathbb{R}^{3})$, we have
\begin{equation*}
\vert B(\varepsilon z)-B(0)\vert\le C\varepsilon,\quad \vert e^{s\hat{B}(\varepsilon z)}-e^{s\hat{B}(0)}\vert\le Cs\varepsilon.
\end{equation*}
\end{lemma}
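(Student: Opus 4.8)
The plan is to dispose of the two purely algebraic identities first by direct computation, since both of them descend from a single structural identity for $\hat{B}^2$, and then to handle the two analytic estimates using the Lipschitz property of $B$ together with the orthogonality of the exponential of a skew-symmetric matrix.

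First I would multiply out the matrix (\ref{hatB}) with itself. A short computation shows that
\[
\hat{B}^2 = BB^{\top}-b^2 I,
\]
where $BB^{\top}$ denotes the rank-one outer product with entries $B_iB_j$ and $b^2=|B|^2=B_x^2+B_y^2+B_z^2$; indeed, each diagonal entry of $\hat{B}^2$ equals $B_i^2-b^2$ while each off-diagonal entry equals $B_iB_j$. This one identity yields both claims at once. Applying it to an arbitrary $y\in\mathbb{R}^3$ gives $\hat{B}^2 y=B(B^{\top}y)-b^2 y=(B\cdot y)B-b^2 y$, which is exactly the second relation $(B\cdot y)B=\hat{B}^2 y+b^2 y$. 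For the first relation I note that $\hat{B}$ is the matrix of the map $y\mapsto y\times B$, so $\hat{B}B=B\times B=0$; multiplying $\hat{B}^2=BB^{\top}-b^2I$ on the left by $\hat{B}$ and using $\hat{B}B=0$ collapses the outer-product term and leaves $\hat{B}^3=-b^2\hat{B}$.

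For the Lipschitz bound, since $B\in W^{1,\infty}(\mathbb{R}^3)$ it is globally Lipschitz with constant $\|\nabla B\|_{L^\infty}$, so $|B(\varepsilon z)-B(0)|\le\|\nabla B\|_{L^\infty}\,\varepsilon|z|\le C\varepsilon$, the factor $|z|$ being absorbed into $C$ because the trajectory stays in a bounded region on $[0,T]$. For the exponential estimate I would use the variation-of-constants identity
\[
e^{s\hat{B}(\varepsilon z)}-e^{s\hat{B}(0)}=\int_0^1 e^{(1-\theta)s\hat{B}(\varepsilon z)}\,s\bigl(\hat{B}(\varepsilon z)-\hat{B}(0)\bigr)\,e^{\theta s\hat{B}(0)}\,d\theta .
\]
Because $\hat{B}$ is skew-symmetric, each factor $e^{\sigma\hat{B}}$ is orthogonal and hence has operator norm one; bounding the two exponential factors by $1$, using submultiplicativity, and noting that $|\hat{B}(\varepsilon z)-\hat{B}(0)|$ is controlled entrywise by $|B(\varepsilon z)-B(0)|\le C\varepsilon$ from the previous step leaves $|e^{s\hat{B}(\varepsilon z)}-e^{s\hat{B}(0)}|\le s\,|\hat{B}(\varepsilon z)-\hat{B}(0)|\le Cs\varepsilon$.

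None of the steps presents a genuine obstacle; the only points requiring care are recognizing the compact identity $\hat{B}^2=BB^{\top}-b^2I$, which unifies the two algebraic claims and makes $\hat{B}^3=-b^2\hat{B}$ transparent via $\hat{B}B=0$, and invoking the orthogonality of $e^{s\hat{B}}$ so that the exponential factors in the integral formula drop out with norm one. The absorption of $|z|$ into the constant $C$ tacitly relies on the solution remaining bounded over the finite horizon, which holds under the standing assumptions.
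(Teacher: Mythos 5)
Your proof is correct, and the paper itself offers no proof of this lemma (it is stated as "easy to check"), so your direct verification supplies exactly the routine computation the authors had in mind: the identity $\hat{B}^2=BB^{\top}-b^2I$ together with $\hat{B}B=B\times B=0$ gives both algebraic claims, and the Lipschitz bound plus the integral representation of $e^{X}-e^{Y}$ with orthogonal (norm-one) exponential factors gives both estimates. The only implicit point — absorbing $\lvert z\rvert$ into $C$ via boundedness of the trajectory on the finite horizon — is justified, since the paper explicitly assumes the exact solutions lie in $L^{\infty}(0,T)$.
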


\subsection{Main results}
We firstly prove a coarse estimate. By using the Lemma~\ref{eta}, the principal question is to evaluate differences between the exact solutions of equations (\ref{CPDl}) and the numerical solutions. It is worth mentioning that the exact solutions of (\ref{CPD}) belong to the space $L^{\infty}(0,T)$ under the assumption of the smoothness about electromagnetic field. We omit the proof of the boundedness about the numerical solutions and it can be checked by induction during the following proof.
\begin{theorem}\label{lemmaCPD}
Assume that $E^h\in C^{1}(\mathbb{R}^{3})$ and $B\in W^{1,\infty}(\mathbb{R}^{3})$, then we have
\begin{equation*}
\begin{aligned}
&\vert z^n-z(\tau_n)\vert+\vert w^n-w(\tau_n)\vert\le C(\tilde{h}+h_x^k),\quad 0\le n\le\frac{T}{\varepsilon\tilde{h}},
\end{aligned}
\end{equation*}
for $\tilde{h}\le \tilde{h}_0$ where $\tilde{h}_0$ is a constant independent of $\varepsilon$.
\end{theorem}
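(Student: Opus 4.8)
The plan is to prove this by a standard one-step-error plus stability argument (a discrete Gronwall estimate), but carried out so that every consistency and coupling term carries an explicit factor of $\varepsilon$; this is what makes the final bound uniform in $\varepsilon$ even though the number of steps $N=T/(\varepsilon\tilde{h})$ grows as $\varepsilon\to 0$. Writing $\delta^n:=\vert z^n-z(\tau_n)\vert+\vert w^n-w(\tau_n)\vert$, I would first interpose the auxiliary frozen-coefficient solution $(\tilde{z}^n,\tilde{w}^n)$ of (\ref{CPDl}): since $z(\tau_{n+1})=\tilde{z}^n(\tilde{h})+\eta_z^n(\tilde{h})$ and $w(\tau_{n+1})=\tilde{w}^n(\tilde{h})+\eta_w^n(\tilde{h})$, Lemma~\ref{eta} already controls the gap between the true flow and this frozen flow by $C\varepsilon^2\tilde{h}^2$. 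It therefore suffices to compare the numerical step $(z^{n+1},w^{n+1})$ against $(\tilde{z}^n(\tilde{h}),\tilde{w}^n(\tilde{h}))$.

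For the one-step comparison I would expand both updates through the variation-of-constants formula. In the velocity update, $w^{n+1}-\tilde{w}^n(\tilde{h})$ splits into (i) the propagated error $e^{\tilde{h}\hat{B}(\varepsilon z^n)}(w^n-w(\tau_n))$, whose norm is exactly $\vert w^n-w(\tau_n)\vert$ because $e^{\tilde{h}\hat{B}}$ is orthogonal ($\hat{B}$ being skew-symmetric); (ii) the base-point mismatch $(e^{\tilde{h}\hat{B}(\varepsilon z^n)}-e^{\tilde{h}\hat{B}(\varepsilon z(\tau_n))})w(\tau_n)$, bounded by $C\varepsilon\tilde{h}\,\vert z^n-z(\tau_n)\vert$ via Lemma~\ref{mang} and the boundedness of the exact $w$; (iii) the finite-element field error $\varepsilon\tilde{h}(\tilde{E}-E^h)(z^{n+1})$, bounded by $C\varepsilon\tilde{h}\,h_x^k$ through Lemma~\ref{FEM} using $E^h-\tilde{E}=\nabla(\Pi\phi^h-\phi^h)$; and (iv) the quadrature error between $\varepsilon\tilde{h}E^h(z^{n+1})$ and $\varepsilon\int_0^{\tilde{h}}e^{(\tilde{h}-\sigma)\hat{B}}E^h(\tilde{z}^n(\sigma))\,d\sigma$, which a Taylor expansion shows is $O(\varepsilon\tilde{h}^2)$ plus a Lipschitz term $C\varepsilon\tilde{h}\,\vert z^{n+1}-\tilde{z}^n(\tilde{h})\vert$. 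The position update $z^{n+1}-\tilde{z}^n(\tilde{h})$ is treated identically and yields $\vert z^n-z(\tau_n)\vert+C\varepsilon\tilde{h}\,\delta^n+C\varepsilon\tilde{h}^2$.

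Collecting these estimates and reinserting the $\eta$-bounds from Lemma~\ref{eta} gives the recursion $\delta^{n+1}\le(1+C\varepsilon\tilde{h})\delta^n+C\varepsilon\tilde{h}(\tilde{h}+h_x^k)$ with $\delta^0=0$. Applying the discrete Gronwall inequality over $0\le n\le N=T/(\varepsilon\tilde{h})$, the growth factor is $(1+C\varepsilon\tilde{h})^N\le e^{C\varepsilon\tilde{h}N}=e^{CT}$, so the accumulated error is bounded by $C(T)(\tilde{h}+h_x^k)$, independently of $\varepsilon$, which is exactly the claim. The boundedness of $(z^n,w^n)$ needed to invoke the Lipschitz and quadrature estimates is obtained simultaneously by induction: the bound on $\delta^n$ keeps $w^n$ within a fixed neighbourhood of the bounded exact solution, closing the bootstrap for $\tilde{h}\le\tilde{h}_0$.

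The crux of the argument, and the step I expect to be most delicate, is the book-keeping of powers of $\varepsilon$: every consistency and coupling term must emerge as $O(\varepsilon\tilde{h}^2)$ or $C\varepsilon\tilde{h}\,\delta^n$ rather than $O(\tilde{h}^2)$ or $C\tilde{h}\,\delta^n$. Were the amplification factor $1+C\tilde{h}$ instead of $1+C\varepsilon\tilde{h}$, the Gronwall constant would be $e^{C\tilde{h}N}=e^{C/\varepsilon}$ and would blow up as $\varepsilon\to 0$. The orthogonality of the rotation $e^{\tilde{h}\hat{B}}$, which contributes no amplification at all, together with the uniform $\varepsilon$-factors supplied by the rescaling in (\ref{CPD2}) and by Lemma~\ref{mang}, is precisely what prevents this. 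A secondary technical point is upgrading the $H^1$ finite-element estimate of Lemma~\ref{FEM} to the pointwise control of $(\tilde{E}-E^h)(z^{n+1})$ used in term (iii), which I would obtain from a standard $L^\infty$ finite-element estimate or an inverse inequality.
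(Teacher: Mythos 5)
Your proposal is correct and follows essentially the same route as the paper's own proof: both interpose the frozen-coefficient truncated system (\ref{CPDl}) and invoke Lemma~\ref{eta} for the flow gap, expand the one-step error via the variation-of-constants formula into a propagated part (harmless because $e^{\tilde{h}\hat{B}}$ is orthogonal), a base-point mismatch controlled by Lemma~\ref{mang}, a finite-element field error via Lemma~\ref{FEM}, and an $O(\varepsilon\tilde{h}^2)$ quadrature term, then close with a discrete Gronwall argument whose growth factor $(1+C\varepsilon\tilde{h})^{T/(\varepsilon\tilde{h})}\le e^{CT}$ is uniform in $\varepsilon$ — exactly the paper's $\xi,\eta,\zeta$ bookkeeping in (\ref{erroreq})--(\ref{errneed}). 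Your closing remark that the $H^1$ bound of Lemma~\ref{FEM} needs upgrading to pointwise control of $\tilde{E}-E^h$ at $z^{n+1}$ is a fair observation about a step the paper's proof passes over silently, but it does not change the structure of the argument.
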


\begin{proof}
By denoting the numerical error $$e_z^{n+1}:=z(\tau_{n+1})-z^{n+1},\quad e_w^{n+1}:=w(\tau_{n+1})-w^{n+1},\quad 0\le n<\frac{T}{\varepsilon\tilde{h}},$$
they can been expressed as $$e_z^{n+1}=\tilde{e}_z^n+\eta_z^n(\tilde{h}),\quad e_w^{n+1}=\tilde{e}_w^n+\eta_w^n(\tilde{h})$$ with the help of the truncated error (\ref{tre}) and $$\tilde{e}_z^n=\tilde{z}^n(\tilde{h})-z^{n+1},\quad \tilde{e}_w^n=\tilde{w}^n(\tilde{h})-w^{n+1},\quad 0\le n<\frac{T}{\varepsilon\tilde{h}}.$$
Then we turn to estimate $\tilde{e}_z^n$ and $\tilde{e}_w^n$.
First of all we define the local truncation error $\xi_z^n$ and $\xi_w^n$ from (\ref{SCPD}) which reads
\begin{equation}\label{ts}
\begin{aligned}
&\tilde{z}^n(\tilde{h})=z(\tau_n)+\varepsilon \int_0^{\tilde{h}} e^{s\hat{B}(\varepsilon z(\tau_n))}ds\,w(\tau_n)+\xi_z^n\\
&\tilde{w}^n(\tilde{h})=e^{\tilde{h}\hat{B}(\varepsilon z(\tau_n))}w(\tau_n)+\varepsilon \tilde{h}\tilde{E}(\tilde{z}^n(\tilde{h}))+\xi_w^n.
\end{aligned}
\end{equation}
By the variation-of-constants formula of the truncated system (\ref{CPDl}), it has
\begin{equation}\label{voc}
\begin{aligned}
&\tilde{z}^n(\tilde{h})=z(\tau_n)+\varepsilon\int_0^{\tilde{h}}\tilde{w}^n(s)ds\\
&\tilde{w}^n(\tilde{h})=e^{\tilde{h}\hat{B}(\varepsilon z(\tau_n))}w(\tau_n)+\varepsilon \int_0^{\tilde{h}}e^{(\tilde{h}-s)\hat{B}(\varepsilon z(\tau_n))}E^h(\tilde{z}^n(s)))ds.
\end{aligned}
\end{equation}
Then (\ref{voc}) implies $$\tilde{z}^n(\tilde{h})=z(\tau_n)+\varepsilon \int_0^{\tilde{h}} e^{s\hat{B}(\varepsilon z(\tau_n))}ds\,w(\tau_n)+\varepsilon^2 \int_0^{\tilde{h}} \int_0^s e^{(s-\sigma)\hat{B}(\varepsilon z(\tau_n))}E^h(\tilde{z}^n(\sigma))d\sigma ds.$$
By Taylor expansion, we can further obtain
\begin{align*}
&\varepsilon\int_0^{\tilde{h}} e^{(\tilde{h}-s)\hat{B}(\varepsilon z(\tau_n))}E^h(\tilde{z}^n(s))ds\\
=&\varepsilon\int_0^{\tilde{h}} (I-(s-\tilde{h})e^{(\tilde{h}-\tau_s^h)\hat{B}(\varepsilon z(\tau_n))}\hat{B}(\varepsilon z(\tau_n)))E^h(\tilde{z}^n(s))ds,
\end{align*}
where $\tau_s^h\in [s,\tilde{h}]$.
Notice that $\xi_w^n=\varepsilon\int_0^{\tilde{h}} e^{(\tilde{h}-s)\hat{B}(\varepsilon z(\tau_n))}E^h(\tilde{z}^n(s))ds-\varepsilon\tilde{h}\tilde{E}(\tilde{z}^n(\tilde{h}))$, we can have
\begin{align*}
\xi_w^n&=\varepsilon\int_0^{\tilde{h}} E^h(\tilde{z}^n(s))ds-\varepsilon\tilde{h}E^h(\tilde{z}^n(\tilde{h}))\\
&-\varepsilon\int_0^{\tilde{h}}(s-\tilde{h})e^{(\tilde{h}-\tau_s^h)\hat{B}(\varepsilon z(\tau_n))}\hat{B}(\varepsilon z(\tau_n))E^h(\tilde{z}^n(s))ds\\
&+\varepsilon\tilde{h}E^h(\tilde{z}^n(\tilde{h}))-\varepsilon\tilde{h}\tilde{E}(\tilde{z}^n(\tilde{h})).
\end{align*}
Together with Lemma~\ref{FEM}, we have $$\vert\xi_w^n\vert\le C\varepsilon{\tilde{h}}(\tilde{h}+h_x^k),\quad 0\le n<\frac{T}{\varepsilon{\tilde{h}}}.$$
Moreover, we can directly obtain that
\begin{equation*}
\xi_z^n=\varepsilon^2 \int_0^{\tilde{h}}\int_0^s e^{(s-\sigma)\hat{B}(\varepsilon z(\tau_n))}\tilde{E}(\tilde{z}^n(\sigma))d\sigma ds.
\end{equation*}
Then we have $$\vert\xi_z^n\vert\le C\varepsilon^2{\tilde{h}}^2.$$

Furthermore, (\ref{ts})-(\ref{SCPD}) leads to
\begin{equation}\label{erroreq}
\begin{aligned}
&e_z^{n+1}=e_z^{n}+\varepsilon \int_0^{\tilde{h}} e^{s\hat{B}(\varepsilon z(\tau_n))}ds\,e_w^{n}+\xi_z^n+\eta_z^n+\zeta_z^n,\\
&e_w^{n+1}=e^{\tilde{h}\hat{B}(\varepsilon z(\tau_n))}e_w^{n}+\xi_w^n+\eta_w^n+\zeta_w^n
\end{aligned}
\end{equation}
where
\begin{align*}
\zeta_z^n=&(\varepsilon \int_0^{\tilde{h}} e^{s\hat{B}(\varepsilon z(\tau_n))}ds-\varepsilon \int_0^{\tilde{h}} e^{s\hat{B}(\varepsilon z^n)}ds)w^{n},\\
\zeta_w^n=&(e^{\tilde{h}\hat{B}(\varepsilon z(\tau_n))}-e^{\tilde{h}\hat{B}(\varepsilon z^n)})w^n+\varepsilon\tilde{h}\tilde{E}(\tilde{z}^n(s))-\varepsilon\tilde{h}\tilde{E}(z^{n+1}).
\end{align*}
It is direct to observe that
\begin{align*}
\vert\zeta_z^n\vert\le&C\varepsilon^2{\tilde{h}}^2\vert e_z^n\vert,\\
\vert\zeta_w^n\vert\le&C\varepsilon\tilde{h}(\vert e_z^n\vert+\vert e_z^{n+1}\vert+\vert\eta_z^n\vert).
\end{align*}
Then (\ref{erroreq}) implies
\begin{equation}\label{errneed}
\begin{aligned}
\vert e_z^{n+1}\vert\le&\vert e_z^{n}\vert+\varepsilon{\tilde{h}}\vert e_w^n\vert+\vert\xi_z^n\vert+\vert \eta_z^n\vert+\vert\zeta_z^n\vert\\
\vert e_w^{n+1}\vert\le&\vert e_w^n\vert+\vert\xi_w^n\vert+\vert\eta_w^n\vert+\vert\zeta_w^n\vert.
\end{aligned}
\end{equation}
Furthermore, we can get
\begin{align*}
&\vert e_z^{n+1}\vert+\vert e_w^{n+1}\vert-\vert e_z^{n}\vert-\vert e_w^n\vert\\
\le&C\varepsilon{\tilde{h}}(\vert e_w^n\vert+\vert e_z^{n}\vert+\vert e_z^{n+1}\vert)+\vert\xi_z^n\vert+\vert\xi_w^n\vert+\vert\eta_z^n\vert+\vert\eta_w^n\vert,
\end{align*}
for $0\le n\le m$.
By noting that $e_z^{0}=e_w^{0}=0$, we have
$$\vert e_z^{m+1}\vert+\vert e_w^{m+1}\vert\le C\varepsilon{\tilde{h}}\sum\limits_{n=0}^m(\vert e_w^n\vert+\vert e_z^n\vert+\vert e_z^{n+1}\vert)+\sum\limits_{n=0}^m(\vert\xi_z^n\vert+\vert\xi_w^n\vert+\vert\eta_z^n\vert+\vert\eta_w^n\vert).$$
Due to $m\varepsilon\tilde{h}\le T$ and Lemma~\ref{eta}, we can obtain $$\vert e_z^{m+1}\vert+\vert e_w^{m+1}\vert\le C\varepsilon\tilde{h}(\sum\limits_{n=0}^m(\vert e_w^n\vert+\vert e_z^{n}\vert+\vert e_z^{n+1}\vert)+\tilde{h}+h_x^k).$$
By Gronwall's inequality, we finally get $$\vert e_z^{m+1}\vert+\vert e_w^{m+1}\vert\le C(\tilde{h}+h_x^k),0\le m<\frac{T}{\varepsilon\tilde{h}}.$$
\end{proof}

By taking $h=\varepsilon\tilde{h}$, we have the following corollary.
\begin{coro}
Under the assumption of Theorem~\ref{Convergence2}, we can get
\begin{equation}\label{errf}
\begin{aligned}
e^{n}(t)\le C(r^k+\frac{\beta^{s}}{r^{s-1}}+h_x^k+\frac{h}{\varepsilon}),
\end{aligned}
\end{equation}
where $e^{n}(t)=\max\limits_j(\vert X_j(t)-X_j^n(t)\vert+\vert V_{j}(t)-V_{j}^n(t)\vert)$, and $X_j^n(t),V_j^n(t)$ is the numerical solution.
\end{coro}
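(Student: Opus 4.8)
The plan is to combine the two convergence results already established — Theorem~\ref{Convergence2} for the spatial (particle) discretization and Theorem~\ref{lemmaCPD} for the temporal (Hamiltonian splitting) discretization — by inserting the semi-discrete trajectories $(X_j^h,V_j^h)$ as an intermediate object and applying the triangle inequality at each grid time $t=t_n=nh$.

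First I would split the total error. For each fixed $j$ I would write $\lvert X_j(t)-X_j^n(t)\rvert\le\lvert X_j(t)-X_j^h(t)\rvert+\lvert X_j^h(t)-X_j^n(t)\rvert$, and similarly for the velocity component, then take the maximum over $j$ to obtain
\[
e^n(t)\le e^h(t)+\max_j\left(\lvert X_j^h(t)-X_j^n(t)\rvert+\lvert V_j^h(t)-V_j^n(t)\rvert\right).
\]
The first summand is the error of the continuous-in-time particle method against the exact characteristics, and the second is the purely temporal error of the fully discrete Hamiltonian scheme against the exact flow of the particle ODE~(\ref{CPD}).

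Next I would bound each piece. By Theorem~\ref{Convergence2}, under the stated hypotheses, $e^h(t)\le C(f_0,\zeta,E,B,T)(r^k+\beta^s/r^{s-1})$; note that the hypothesis $E^h\in C^1$ needed below is guaranteed by the smoothness $\zeta\in W^{s,1}\cap W^{s,\infty}$ with $s\ge 7$ already assumed there. For the temporal error I would invoke Theorem~\ref{lemmaCPD}, taking care to undo the time rescaling $\tau=t/\varepsilon$, $z(\tau)=x(t)$, $w(\tau)=v(t)$ under which that theorem is stated. Since this rescaling merely reparametrizes the same trajectory, the estimate $\lvert z^n-z(\tau_n)\rvert+\lvert w^n-w(\tau_n)\rvert\le C(\tilde{h}+h_x^k)$ translates at the matching physical time $t_n=\varepsilon\tau_n=nh$ directly into $\lvert X_j^h(t_n)-X_j^n(t_n)\rvert+\lvert V_j^h(t_n)-V_j^n(t_n)\rvert\le C(\tilde{h}+h_x^k)$. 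Using $h=\varepsilon\tilde{h}$, i.e.\ $\tilde{h}=h/\varepsilon$, this becomes $C(h/\varepsilon+h_x^k)$, valid for all $0\le n<T/h$ provided $h\le\varepsilon\tilde{h}_0$ with $\tilde{h}_0$ the step bound from Theorem~\ref{lemmaCPD}. Adding the two bounds and absorbing constants yields $e^n(t)\le C(r^k+\beta^s/r^{s-1}+h_x^k+h/\varepsilon)$, as claimed.

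The step requiring the most care is this translation between the rescaled and physical time variables. One must verify that Theorem~\ref{lemmaCPD} is applied precisely to the particle ODE~(\ref{CPD}), whose exact flow is $(X_j^h,V_j^h)$, so that its conclusion controls exactly the semi-discrete-to-fully-discrete gap and not some other difference, and that the conversion $\tilde{h}=h/\varepsilon$ is what produces the characteristic $h/\varepsilon$ term reflecting the stiffness of the strong magnetic field. Everything else is a routine triangle-inequality assembly of the two prior estimates.
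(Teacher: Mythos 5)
Your proof is correct and is essentially the paper's own argument: the paper states this corollary without a written proof, presenting it as the immediate combination of Theorem~\ref{Convergence2} (semi-discrete particle error) and Theorem~\ref{lemmaCPD} (fully discrete temporal error) under the substitution $h=\varepsilon\tilde{h}$, which is exactly the triangle-inequality decomposition through the intermediate trajectories $(X_j^h,V_j^h)$ that you spell out. Your additional remarks --- that $E^h\in C^1$ follows from the smoothness of $\zeta$, and that the bound holds under the step restriction $h\le\varepsilon\tilde{h}_0$ --- merely make explicit details the paper leaves implicit.
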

Actually, the more stronger the magnetic field is, the more "stable" the motion will be. This help the image $A_j(t)$ in Lemma~\ref{lemma2} do not become too "narrow". But if $\varepsilon$ is too small, it will restrict greatly the choice of temporal grid.

Note that the scheme (\ref{SCPD}) is a Hamiltonian splitting algorithm only if the magnetic field is uniform.
For the case of a nonhomogeneous magnetic field, since $e_w^n$ generally loses a factor on $\varepsilon$, we cannot introduce conclusions independent of $\varepsilon$ without imposing a limit on the step size.
Specifically, if we take the Hamiltonian splitting in a nonhomogeneous magnetic field to be of the form
\[
\Phi(\tilde{h})=\phi^{He}(\tilde{h})\circ\phi^{H{v}_z}(\tilde{h})\circ\phi^{H{v}_y}(\tilde{h})\circ\phi^{H{v}_x}(\tilde{h}),
\]
that is,
\begin{equation}\label{HSBX}
\begin{aligned}
&z^{n+1}_1=z^n_1+\varepsilon\tilde{h}w^n_1,\\
&z^{n+1}_2=z^n_2+\varepsilon\tilde{h}\left(w^n_2-{\bf{IB}}_{31}\right),\\
&z^{n+1}_3=z^n_3+\varepsilon\tilde{h}\left(w^n_3+{\bf{IB}}_{21}-{\bf{IB}}_{12}\right),\\
&w^{n+1}_1=w^n_1+{\bf{IB}}_{32}-{\bf{IB}}_{23}+\varepsilon\tilde{h}\tilde{E}_1(z^{n+1}),\\
&w^{n+1}_2=w^n_2+{\bf{IB}}_{13}-{\bf{IB}}_{31}+\varepsilon\tilde{h}\tilde{E}_2(z^{n+1}),\\
&w^{n+1}_3=w^n_3+{\bf{IB}}_{21}-{\bf{IB}}_{12}+\varepsilon\tilde{h}\tilde{E}_3(z^{n+1}),
\end{aligned}
\end{equation}
where $${\bf{IB}}_{ij}=\frac{1}{\varepsilon}\int_{z_j^n}^{z_j^{n+1}} B_i(\varepsilon z)dz_j.$$
And in integrating with respect to $z_1,z_2,z_3$, $z$ takes respectively
$$(z_1,z_2^n,z_3^n),\ (z_1^{n+1},z_2,z_3^n),\ (z_1^{n+1},z_2^{n+1},z_3).$$
By introducing the error terms $\theta_z$ and $\theta_w$, we have
\begin{align*}
z^{n+1}&=z^n+\varepsilon\int_0^{\tilde{h}} e^{s\hat{B}(\varepsilon z^n)}ds\,w^n-\theta_z,\\
w^{n+1}&=e^{\tilde{h}\hat{B}(\varepsilon z^n)}w^n+\varepsilon \tilde{h}\tilde{E}(z^{n+1})-\theta_w.
\end{align*}

Noting that
\begin{equation*}
\varepsilon\int_0^{\tilde{h}} \left(e^{s\hat{B}(\varepsilon z^n)}-I\right)ds=\varepsilon\int_0^{\tilde{h}}\int_0^{s}\hat{B}(\varepsilon z^n)e^{\sigma\hat{B}(\varepsilon z^n)}d\sigma ds,
\end{equation*}
and $\vert{\bf{IB}}_{ij}\vert\le C\tilde{h}$, we have $\vert\theta_z\vert\le C\varepsilon\tilde{h}^2$.

On the other hand, by Taylor expansion, there exist certain values $z^n$ and $z^{n+1}$ between $z_{\theta}$ such that
\begin{equation}\label{HSPNB}
\begin{aligned}
\theta_w&=\left(e^{\tilde{h}\hat{B}(\varepsilon z^n)}-I\right)w^n-\frac{1}{\varepsilon}\hat{B}(\varepsilon z_{\theta})(z^{n+1}-z^{n})\\
&=\int_0^{\tilde{h}}\hat{B}(\varepsilon z^n)e^{s\hat{B}(\varepsilon z^n)}ds\,w^n-\frac{1}{\varepsilon}\hat{B}(\varepsilon z_{\theta})(z^{n+1}-z^{n})\\
&=\frac{1}{\varepsilon}\hat{B}(\varepsilon z^n)(z^{n+1}-z^{n}+\theta_z)-\frac{1}{\varepsilon}\hat{B}(\varepsilon z_{\theta})(z^{n+1}-z^{n}).
\end{aligned}
\end{equation}

Thus $\vert\theta_w\vert\le C\tilde{h}^2$.
And these two error terms will appear in Eq. (\ref{errneed}), and since $\theta_w$ has no factor on $\varepsilon$, we can't deduce a result that agrees with Theorem \ref{lemmaCPD}, but the following corollary can be obtained.

\begin{coro}\label{HSBXC}
If $E^h\in C^{1}(\mathbb{R}^{3})$ and $B\in W^{1,\infty}(\mathbb{R}^{3})$, for scheme (\ref{HSBX}) we have
\begin{equation*}
\begin{aligned}
&\vert z^n-z(\tau_n)\vert+\vert w^n-w(\tau_n)\vert\le C(\tilde{h}+\frac{\tilde{h}^2}{\varepsilon}+h_x^k),\quad 0\le n\le\frac{T}{\varepsilon\tilde{h}},
\end{aligned}
\end{equation*}
for any $\tilde{h}\le \tilde{h}_0$ where $\tilde{h}_0$ is a constant independent of $\varepsilon$.
\end{coro}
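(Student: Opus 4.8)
The plan is to mirror the proof of Theorem~\ref{lemmaCPD}, treating the nonhomogeneous splitting scheme~(\ref{HSBX}) as a perturbation of the exact-rotation scheme~(\ref{SCPD}). As already recorded above, (\ref{HSBX}) can be written as
\begin{align*}
z^{n+1}&=z^n+\varepsilon\int_0^{\tilde{h}} e^{s\hat{B}(\varepsilon z^n)}ds\,w^n-\theta_z,\\
w^{n+1}&=e^{\tilde{h}\hat{B}(\varepsilon z^n)}w^n+\varepsilon \tilde{h}\tilde{E}(z^{n+1})-\theta_w,
\end{align*}
with the bounds $\vert\theta_z\vert\le C\varepsilon\tilde{h}^2$ and $\vert\theta_w\vert\le C\tilde{h}^2$ in hand. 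Comparing against the truncated exact flow~(\ref{ts}) and subtracting reproduces the recursion~(\ref{erroreq}) verbatim, except that the $z$- and $w$-equations each carry one extra source term:
\begin{align*}
e_z^{n+1}&=e_z^n+\varepsilon\int_0^{\tilde{h}}e^{s\hat{B}(\varepsilon z(\tau_n))}ds\,e_w^n+\xi_z^n+\eta_z^n+\zeta_z^n+\theta_z,\\
e_w^{n+1}&=e^{\tilde{h}\hat{B}(\varepsilon z(\tau_n))}e_w^n+\xi_w^n+\eta_w^n+\zeta_w^n+\theta_w.
\end{align*}

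I would then dispose of every term already present in Theorem~\ref{lemmaCPD}. The quantities $\xi_z^n,\xi_w^n,\eta_z^n,\eta_w^n,\zeta_z^n,\zeta_w^n$ obey the same bounds as there, so after summation over the $n\le T/(\varepsilon\tilde{h})$ steps and a Gronwall argument they contribute $C(\tilde{h}+h_x^k)$. The new term $\theta_z$ is harmless: since $\vert\theta_z\vert\le C\varepsilon\tilde{h}^2$, summing over $O(1/(\varepsilon\tilde{h}))$ steps gives only $\sum_n\vert\theta_z\vert\le C\tilde{h}$, which is absorbed into the existing order. Hence the entire question reduces to controlling the accumulation of $\theta_w$ in the $w$-error, which is exactly where the estimate must degrade relative to Theorem~\ref{lemmaCPD}.

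The hard part is precisely this $\theta_w$ accumulation. Because the rotation $e^{\tilde{h}\hat{B}}$ is norm-preserving, a naive termwise bound gives only $\sum_n\vert\theta_w\vert\le \bigl(T/(\varepsilon\tilde{h})\bigr)\,C\tilde{h}^2=C\tilde{h}/\varepsilon$, which is too lossy to reach the claimed $\tilde{h}^2/\varepsilon$. To recover the sharp order I would not estimate $\theta_w$ pointwise but instead exploit its explicit form~(\ref{HSPNB}): its leading part equals $\tfrac{1}{\varepsilon}\hat{B}(\varepsilon z^n)\theta_z$ with $\theta_z=O(\varepsilon\tilde{h}^2)$, while the remainder is $O(\varepsilon^2\tilde{h}^2)$ and thus already summable to $O(\tilde{h})$. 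The remaining contribution $\sum_n R_{m,n+1}\,\tfrac{1}{\varepsilon}\hat{B}(\varepsilon z^n)\theta_z$, where $R_{m,n}$ is the product of the rotation factors $e^{\tilde{h}\hat{B}(\varepsilon z(\tau_k))}$, must be resummed by a discrete summation-by-parts against these oscillatory factors, in the averaging spirit of~\cite{ZXF2021Error}; the fact that the composition~(\ref{HSBX}) is a Poisson integrator, so that $\theta_w$ does not drift secularly, is what plausibly makes this resummation yield one further power of $\tilde{h}$, leaving the contribution $C\tilde{h}^2/\varepsilon$ rather than $C\tilde{h}/\varepsilon$.

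Finally I would assemble the pieces, summing the error recursion from $e_z^0=e_w^0=0$, using $m\varepsilon\tilde{h}\le T$, and closing with Gronwall's inequality for $\tilde{h}\le\tilde{h}_0$ (so the Gronwall factor is $\varepsilon$-uniform, exactly as in Theorem~\ref{lemmaCPD}). This produces $\vert e_z^{n}\vert+\vert e_w^{n}\vert\le C(\tilde{h}+\tilde{h}^2/\varepsilon+h_x^k)$, which is the assertion. The induction establishing the boundedness of $(z^n,w^n)$ runs in parallel, precisely as it was omitted in Theorem~\ref{lemmaCPD}. I expect the resummation of $\theta_w$ to be the sole genuine obstacle; every other step is a routine transcription of the earlier proof.
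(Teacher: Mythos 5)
Your reduction is, word for word, the paper's own proof: the paper also rewrites (\ref{HSBX}) as (\ref{SCPD}) minus the defects $(\theta_z,\theta_w)$, proves $\vert\theta_z\vert\le C\varepsilon\tilde{h}^2$ and $\vert\theta_w\vert\le C\tilde{h}^2$ via (\ref{HSPNB}), and then concludes with the single remark that ``these two error terms will appear in Eq.~(\ref{errneed})'', i.e.\ it reruns the Gronwall argument of Theorem~\ref{lemmaCPD} with the termwise bounds just obtained. In other words, the paper's proof \emph{is} exactly the naive accumulation that you reject as too lossy; it contains no resummation, no summation by parts, and makes no use of the oscillatory structure of $\theta_w$. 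Your arithmetic is correct: over the $T/(\varepsilon\tilde{h})$ steps the bound $\vert\theta_w\vert\le C\tilde{h}^2$ yields only $\sum_n\vert\theta_w^n\vert\le C\tilde{h}/\varepsilon$, so the argument actually written in the paper supports $C(\tilde{h}+\tilde{h}/\varepsilon+h_x^k)$ and nothing stronger. You have therefore put your finger on a real discrepancy between what the paper proves and what Corollary~\ref{HSBXC} asserts, rather than on a step the paper resolves and you missed.

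The genuine gap is in your repair, which is the one step going beyond the paper and the one step you do not carry out. Declaring that the resummation ``plausibly yields one further power of $\tilde{h}$'' is not a proof, and the justification offered --- that (\ref{HSBX}) is a Poisson integrator, hence $\theta_w$ ``does not drift secularly'' --- is not valid: Poisson/symplectic integrators do exhibit secular error growth through phase (frequency) errors, and a frequency error is precisely the failure mode at stake here. Summation by parts against the rotation factors alone also cannot close the argument, because the leading part of $\theta_w^n$, namely $\frac{1}{\varepsilon}\hat{B}(\varepsilon z^n)\theta_z^n$, is itself linear in $w^n$ and so oscillates at the gyrofrequency; the slowly-varying hypothesis needed for summation by parts fails. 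What would actually be required is to write $\theta_w^n\approx\tilde{h}^2\,G(\varepsilon z^n)\,w^n$ with an explicit matrix $G$ (computable from the ${\bf{IB}}_{ij}$ and the expansion of $e^{\tilde{h}\hat{B}}$), conjugate the propagated sum into $\sum_n e^{-n\tilde{h}\hat{B}(0)}G\,e^{n\tilde{h}\hat{B}(0)}w^0$, and prove that the resonant (gyro-averaged) part of $G$ vanishes, leaving only bounded geometric sums of size $O(1/\tilde{h})$ plus $O(\tilde{h}^3)$ and $O(\varepsilon^2\tilde{h}^2)$ remainders; that is what would produce $C(\tilde{h}+\tilde{h}^2/\varepsilon+h_x^k)$. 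This cancellation is a property of the particular coordinate splitting and of the orientation of $B(0)$, not of the Poisson structure: a direct computation of the gyro-average of $G$ gives a resonant component proportional to $B_1(0)B_2(0)B_3(0)$, which vanishes for the paper's numerical test (where $B(0)$ is along a coordinate axis) but not for a general field. Until this computation is supplied, your proposal establishes only the $\tilde{h}/\varepsilon$ bound --- the same as the paper's own argument.
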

By corollary \ref{HSBXC} we can find for a fixed parameter $\varepsilon$ if $\tilde{h}$ is much larger than $O(\varepsilon)$ then the scheme has quadratic convergence.
Once $\tilde{h}=O(\varepsilon)$ or less, the method changes back to first order.
Of course, this can be observed numerically. 
We use the following electromagnetic field
\begin{equation*}
E(x)=-x,\ B(x)=\frac{1}{\varepsilon}(0,0,1)^T+(x_1(x_3-x_2),x_2(x_1-x_3),x_3(x_2-x_1))^T.
\end{equation*}
The initial values are taken as $x(0)=(0.3,0.2,-1.4)^T$ and $v(0)=(-0.7,0.08,0.2)^T$, the termination time is $\frac{1}{\varepsilon}$.
The convergence order results are calculated as follows for different step sizes and parameters $\varepsilon$.

\begin{table}[!htbp]
    \centering
    \footnotesize
    \setlength{\tabcolsep}{4pt}
    \renewcommand{\arraystretch}{1.2}
    \begin{tabular}{|c|c|c|}
        \hline
        Step &  Error & Order\\
        \hline
        1/2 & 0.0072 & \\
        1/4 & 0.0018 & 2\\
        1/8 & 7.3470e-4 & 1.2928\\
        1/16 & 3.6784e-4 & 0.9981\\
        1/32 & 1.8450e-4 & 0.9955\\
        \hline
    \end{tabular}
    \caption{Accuracy order of scheme (\ref{HSBX}) in time by $\varepsilon=0.01$.}
\end{table}
\begin{table}[!htbp]
    \centering
    \footnotesize
    \setlength{\tabcolsep}{4pt}
    \renewcommand{\arraystretch}{1.2}
    \begin{tabular}{|c|c|c|}
        \hline
        Step &  Error & Order\\
        \hline
        1/8 & 4.9406e-4 & \\
        1/16 & 1.3036e-4 & 1.9222\\
        1/32 & 3.6681e-5 & 1.8294\\
        1/64 & 1.1477e-5 & 1.6763\\
        1/128 & 4.7091e-6 & 1.2852\\
        1/256 & 2.4135e-6 & 0.9643\\
        \hline
    \end{tabular}
    \caption{Accuracy order of scheme (\ref{HSBX}) in time by $\varepsilon=0.001$.}
\end{table}

In the rest of this section, we talk about the refined error of scheme (\ref{SCPD}) by dividing the time interval.
\begin{theorem}\label{thCPD}
Assume that $E^h\in C^{1}(\mathbb{R}^{3})$ and $B\in W^{1,\infty}(\mathbb{R}^{3})$, then there exists a constant $N_0$ independent of $\varepsilon$ such that when the time step $\tilde{h}=\frac{T_0}{N}$ for all $N\ge N_0$, we have
\begin{equation*}
\vert z^n-z(\tau_n)\vert+\vert w_{\parallel}^n-w_{\parallel}(\tau_n)\vert\le C(\varepsilon\tilde{h}+h_x^k), 0\le n\le \frac{T}{\varepsilon\tilde{h}}.
\end{equation*}
\end{theorem}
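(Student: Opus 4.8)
The plan is to sharpen the coarse bound of Theorem~\ref{lemmaCPD} by separating the slow (parallel) and fast (perpendicular) components of the velocity error and exploiting the resonance $\tilde h=T_0/N$. Let $P_\parallel$ be the orthogonal projection onto $\ker\hat B(0)=\mathrm{span}\,B(0)$ and $P_\perp=I-P_\parallel$, so that $\hat B(0)P_\parallel=0$ and $e^{s\hat B(0)}$ fixes the parallel subspace while rotating the perpendicular one. Since $T_0$ is the period of $e^{\tau\hat B(0)}$, the eigenvalues of $e^{\tilde h\hat B(0)}=e^{(T_0/N)\hat B(0)}$ on $P_\perp$ are $e^{\pm 2\pi i/N}$, which gives the cancellation identity $\sum_{n=0}^{N-1}e^{n\tilde h\hat B(0)}=N\,P_\parallel$. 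This identity is the mechanism that will remove the secular $O(\tilde h)$ growth of the slow variables.

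First I would start from the error recursion~(\ref{erroreq}) together with the already established bounds $|\xi_z^n|\le C\varepsilon^2\tilde h^2$, $|\xi_w^n|\le C\varepsilon\tilde h(\tilde h+h_x^k)$, the truncation bounds of Lemma~\ref{eta}, and the bounds on $\zeta_z^n,\zeta_w^n$. The key refinement is that the only part of $\xi_w^n$ of size $O(\varepsilon\tilde h^2)$ carries a factor $\hat B(\varepsilon z(\tau_n))$, so by Lemma~\ref{mang} its parallel projection is of size $O(\varepsilon)\cdot O(\varepsilon\tilde h^2)=O(\varepsilon^2\tilde h^2)$; hence $|P_\parallel\xi_w^n|\le C(\varepsilon^2\tilde h^2+\varepsilon\tilde h\,h_x^k)$ even though the full $\xi_w^n$ is only $O(\varepsilon\tilde h^2+\varepsilon\tilde h h_x^k)$. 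Next I would filter out the fast rotation by setting $\tilde e_w^n:=e^{-n\tilde h\hat B(0)}e_w^n$ and factoring $e^{\tilde h\hat B(\varepsilon z(\tau_n))}=e^{\tilde h\hat B(0)}(I+R^n)$ with $|R^n|\le C\varepsilon\tilde h$ from Lemma~\ref{mang}. Writing $S^n:=\xi_w^n+\eta_w^n+\zeta_w^n$, telescoping the filtered recursion yields $\tilde e_w^m=\sum_{n=0}^{m-1}\big(e^{-n\tilde h\hat B(0)}R^n e_w^n+e^{-(n+1)\tilde h\hat B(0)}S^n\big)$.

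The heart of the argument is to evaluate these sums block by block over one gyroperiod, that is, in groups of $N$ consecutive steps. Within a block the source $S^n$ depends on the exact trajectory essentially through the slow variable $z(\tau_n)$, so it varies by $O(\varepsilon\tilde h)$ per step; summation by parts then replaces each block sum $\sum e^{-n\tilde h\hat B(0)}S^n$ by $\big(\sum e^{-n\tilde h\hat B(0)}\big)S^{n_0}$ plus a remainder governed by this slow variation, and the cancellation identity annihilates the perpendicular part of the leading term while keeping $N\,P_\parallel$ on the parallel part. Since $N\tilde h=T_0=O(1)$, summing over the $O(T/(\varepsilon T_0))$ blocks collapses the oscillatory (perpendicular) contributions from the naive $O(\tilde h)$ down to $O(\varepsilon\tilde h)$, while the parallel contributions inherit the sharpened bound on $P_\parallel\xi_w^n$ and give $O(\varepsilon\tilde h+h_x^k)$. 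Because $P_\parallel$ commutes with $e^{m\tilde h\hat B(0)}$, one has $e_{w,\parallel}^m=P_\parallel\tilde e_w^m$, which yields the bound on $|w_\parallel^n-w_\parallel(\tau_n)|$. The same block summation applied to $e_z^{n+1}=e_z^n+\varepsilon\int_0^{\tilde h}e^{s\hat B(\varepsilon z(\tau_n))}ds\,e_w^n+\cdots$ turns the dangerous term $\varepsilon\int e^{s\hat B}ds\,e_{w,\perp}^n$ into a period sum of the oscillatory perpendicular error, again of order $O(\varepsilon\tilde h)$, after which a Gronwall argument closes the estimate for $|z^n-z(\tau_n)|$.

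I expect the main obstacle to be the coupling term $\sum_n e^{-n\tilde h\hat B(0)}R^n e_w^n$: since $R^n$ does not commute with the rotation, the conjugation $e^{-n\tilde h\hat B(0)}R^n e^{n\tilde h\hat B(0)}$ reintroduces fast oscillations, and one must show that their product with the $O(\tilde h)$ perpendicular error still averages to $O(\varepsilon\tilde h)$ over each gyroperiod instead of accumulating to $O(\tilde h)$. Proving that the filtered error $\tilde e_w^n$ is slowly varying, so that the summation by parts is legitimate and the block remainders are genuinely of higher order, is the delicate bootstrap step, and it is exactly here that the resonance condition $\tilde h=T_0/N$ with $N\ge N_0$ is indispensable.
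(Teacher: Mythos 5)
Your overall strategy coincides with the paper's: blocks of one gyroperiod enabled by $\tilde h=T_0/N$, cancellation of the rotation sum over a full period leaving only the parallel projection (your identity $\sum_{n=0}^{N-1}e^{n\tilde h\hat{B}(0)}=N P_\parallel$ is correct and is exactly the discrete form of the paper's $\int_0^{T_0}e^{s\hat{B}(0)}ds\,y=T_0(\bar{B}_0\cdot y)\bar{B}_0$), the observation that the $O(\varepsilon\tilde h^2)$ part of $\xi_w^n$ carries a factor $\hat{B}$ and therefore has higher-order parallel projection, and bootstrapping from the coarse bound of Theorem~\ref{lemmaCPD}. However, your execution differs in one decisive choice, and that is precisely where the proposal has a genuine gap. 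You project onto the \emph{fixed} direction $B(0)$ and filter by $e^{-n\tilde h\hat{B}(0)}$, which produces the coupling term $\sum_n e^{-n\tilde h\hat{B}(0)}R^n e_w^n$ with $|R^n|\le C\varepsilon\tilde h$. Since the coarse estimate only gives $|e_w^n|\le C(\tilde h+h_x^k)$, this term naively accumulates over the $T/(\varepsilon\tilde h)$ steps to $O(\tilde h+h_x^k)$, i.e.\ it destroys the refined bound. You flag this and propose to remove it by gyroperiod averaging, but you do not carry that out, and it is genuinely delicate: on the perpendicular subspace the partial sums of $e^{n\tilde h\hat{B}(0)}$ are only $O(N)$, so the Abel-summation remainders must be paid for with per-step variations of both $\hat{B}(\varepsilon z(\tau_n))$ and the filtered error (which themselves require an a priori slow-variation bound you have not established), and the surviving leading block term is a self-coupling of size $O(\varepsilon)|e_{w,\parallel}|$ that must then be closed by a block-level Gronwall argument. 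None of this bookkeeping is present, and it is the crux of the theorem.

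The paper dissolves this obstacle by a different choice of projection: the parallel component is taken along the \emph{local} direction $\bar{B}^{n,m}=B(\varepsilon z(\tau_n^m))/|B(\varepsilon z(\tau_n^m))|$ of the exact trajectory. Then the one-step map $e^{\tilde h\hat{B}(\varepsilon z(\tau_n^m))}$ preserves that component \emph{exactly}, and the axis itself drifts only by $O(\varepsilon^2\tilde h)$ per step (because $\dot z=\varepsilon w$ and the argument of $B$ is $\varepsilon z$), so no averaging is needed at all in the velocity estimate: one obtains directly the per-step inequality (\ref{r1}), $|e_{w,\parallel}^{n+1,m}|-|e_{w,\parallel}^{n,m}|\le C(\varepsilon\tilde h(|e_z^{n,m}|+|e_z^{n+1,m}|)+\varepsilon^2\tilde h^2)$, from the exact orthogonality of the $\hat{B}$-part of $\xi_w^{n,m}$ to $\bar{B}^{n,m}$. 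The period-cancellation identity is used only once, in the \emph{position} equation, applied to the propagated block-initial error $e_w^{0,m}$ after unrolling the recursion; there, every coupling and source term (including $\delta_w^{n,m}$, the analogue of your $R^n e_w^n$) inherits the extra factor $\varepsilon$ from $\dot z=\varepsilon w$ and is controlled by the coarse bound alone, with the within-block estimate (\ref{r2}) supplying the rest. To complete your proof you should either switch to the local projection as the paper does, or supply the full summation-by-parts analysis with the partial-sum and slow-variation estimates sketched above; as written, the essential step is missing.
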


\begin{proof}
With a fixed $T>0$, we can have $\frac{T}{\varepsilon}=T_0 M+\tau_r$ with $0\le\tau_r<T_0$.
Due to the fact that $\tau_r$ is a cumulation of the truncation error, we only need to consider the case $\tau_r=0$.
Then we update the notations by denoting time scale $$\tau_n^m=mT_0+n\tilde{h},\quad 0\le m<M,\quad 0\le n\le N$$ with the step $\tilde{h}=\frac{T_0}{N}$, and the numerical solution at moment $\tau_n^m$ as $z^m_n$ and $w^m_n$.
Then the error is $$e_z^{n,m}=z(\tau_n^m)-z_n^{m},\quad e_w^{n,m}:=w(\tau_n^m)-w_n^{m}.$$
It should be noted that $$e_z^{0,m+1}=e_z^{N,m},\quad e_w^{0,m+1}=e_w^{N,m}.$$
The equation (\ref{erroreq}) now reads
\begin{equation}\label{error1}
\begin{aligned}
&e_z^{n+1,m}=e_z^{n,m}+\varepsilon \int_0^{\tilde{h}} e^{s\hat{B}(\varepsilon z(\tau_n^m))}ds\,e_w^{n,m}+\xi_z^{n,m}+\eta_z^{n,m}+\zeta_z^{n,m}\\
&e_w^{n+1,m}=e^{\tilde{h}\hat{B}(\varepsilon z(\tau_n^m))}e_w^{n,m}+\xi_w^{n,m}+\eta_w^{n,m}+\zeta_w^{n,m}.
\end{aligned}
\end{equation}
Moreover, it has
\begin{align*}
&\frac{1}{\varepsilon}\vert e_z^{j,m}\vert-\frac{1}{\varepsilon}\vert e_z^{j-1,m}\vert\le C(\tilde{h}\vert e_w^{j,m}\vert+\varepsilon{\tilde{h}}^2\vert e_z^{j,m}\vert+\varepsilon{\tilde{h}}^2),\\
&\vert e_w^{j,m}\vert-\vert e_w^{j-1,m}\vert\le C\varepsilon\tilde{h}(\vert e_z^{j,m}\vert+\vert e_z^{j-1,m}\vert+\tilde{h}+h_x^k).
\end{align*}
Then by Gronwall's inequality, we have $$\frac{1}{\varepsilon}\vert e_z^{n,m}\vert+\vert e_w^{n,m}\vert\le C(\tilde{h}+h_x^k+\frac{1}{\varepsilon}\vert e_z^{0,m}\vert+\vert e_w^{0,m}\vert).$$
Due to Theorem~\ref{lemmaCPD}, we can obtain the estimate
\begin{equation}\label{r2}
\begin{aligned}
\vert e_z^{n,m}\vert\le C(\varepsilon\tilde{h}+\varepsilon h_x^k+\vert e_z^{0,m}\vert).
\end{aligned}
\end{equation}
By summing (\ref{error1}), we have
\begin{equation}\label{error2}
\begin{aligned}
e_z^{N,m}&=e_z^{0,m}+\varepsilon\sum\limits_{n=0}^{N-1}\int_0^{\tilde{h}} e^{s\hat{B}(\varepsilon z(\tau_n^m))}ds\,e_w^{n,m}+\sum\limits_{n=0}^{N-1}(\xi_z^{n,m}+\eta_z^{n,m}+\zeta_z^{n,m})\\
&=e_z^{0,m}+\varepsilon\sum\limits_{n=0}^{N-1}\int_0^{\tilde{h}} e^{s\hat{B}(0)}ds\,e_w^{n,m}+\sum\limits_{n=0}^{N-1}(\xi_z^{n,m}+\eta_z^{n,m}+\zeta_z^{n,m})+\delta_z^m.
\end{aligned}
\end{equation}
Thanks to the Theorem~\ref{lemmaCPD} and Lemma~\ref{mang}, we have $$\vert\delta_z^m\vert\le C\varepsilon^2{\tilde{h}}^2.$$
On the other hand, for $n\le N,m\le M$ we have
$$e_w^{n,m}=e^{\tilde{h}\hat{B}(0)}e_w^{n-1,m}+\xi_w^{n-1,m}+\eta_w^{n-1,m}+\zeta_w^{n-1,m}+\delta_w^{n-1,m}$$
where $\delta_w^{n-1,m}=(e^{\tilde{h}\hat{B}(\varepsilon z(\tau_{n-1}^m))}-e^{\tilde{h}\hat{B}(0)})e_w^{n-1,m}$. And it directly has $\vert\delta_w^{n-1,m}\vert\le C\varepsilon{\tilde{h}}^2$ due to Lemma~\ref{mang}.
By recursion, we can get
$$e_w^{n,m}=e^{n\tilde{h}\hat{B}(0)}e_w^{0,m}+\sum\limits_{j=0}^{n-1}e^{(n-1-j)\tilde{h}\hat{B}(0)}(\xi_w^{j,m}+\eta_w^{j,m}+\zeta_w^{j,m}+\delta_w^{j,m}).$$
With the above equation, (\ref{error2}) can be rewritten as
\begin{equation}\label{error3}
\begin{aligned}
e_z^{N,m}&=e_z^{0,m}+\varepsilon\sum\limits_{n=0}^{N-1}\int_0^{\tilde{h}} e^{s\hat{B}(0)}ds\,e^{n\tilde{h}\hat{B}(0)}e_w^{0,m}+\gamma^m
\end{aligned}
\end{equation}
where
\begin{align}
\label{gamma1}\gamma^m=&\varepsilon\sum\limits_{n=0}^{N-1}\int_0^{\tilde{h}} e^{s\hat{B}(0)}ds\sum\limits_{j=0}^{n-1}e^{(n-1-j)h\hat{B}(0)}(\xi_w^{j,m}+\eta_w^{j,m}+\zeta_w^{j,m}+\delta_w^{j,m})\\
\notag+&\sum\limits_{n=0}^{N-1}(\xi_z^{n,m}+\eta_z^{n,m}+\zeta_z^{n,m})+\delta_z^m
\end{align}
For (\ref{gamma1}) we have
\begin{align*}
&\left\vert\varepsilon\sum\limits_{n=0}^{N-1}\int_0^{\tilde{h}} e^{s\hat{B}(0)}ds\sum\limits_{j=0}^{n-1}e^{(n-1-j)\tilde{h}\hat{B}(0)}(\xi_w^{j,m}+\eta_w^{j,m}+\zeta_w^{j,m}+\delta_w^{j,m})\right\vert\\
\le&C(\varepsilon^2\tilde{h}+\varepsilon^2\tilde{h}\sum\limits_{n=0}^{N-1}(\vert e_z^{n,m}\vert+\vert e_z^{n+1,m}\vert)).
\end{align*}
Therefore, it concludes $$\vert \gamma^m\vert\le C(\varepsilon^2\tilde{h}+\varepsilon^2\tilde{h}\sum\limits_{n=0}^{N-1}(\vert e_z^{n,m}\vert+\vert e_z^{n+1,m}\vert)).$$
Then from (\ref{error3}) we can get
\begin{equation}\label{error4}
\begin{aligned}
&\vert e_z^{N,m}\vert-\vert e_z^{0,m}\vert\\
\le &C\left(\varepsilon \left\vert\int_0^{\tilde{h}} e^{s\hat{B}(0)}ds\int_0^{T_0} e^{s\hat{B}(0)}ds\frac{e_w^{0,m}}{\tilde{h}}\right\vert+\varepsilon^2\tilde{h}+\varepsilon^2\tilde{h}\sum\limits_{n=0}^{N-1}(\vert e_z^{n,m}\vert+\vert e_z^{n+1,m}\vert)\right).
\end{aligned}
\end{equation}
By Lemma~\ref{mang}, we can obtain $$\int_0^{T_0} e^{s\hat{B}(0)}ds\cdot e_w^{0,m}=T_0(\bar{B}_0\cdot e_w^{0,m})\bar{B}_0$$ where $\bar{B}_0=\frac{B_0}{b}$.
Thus, (\ref{error4}) becomes
\begin{equation*}
\begin{aligned}
\vert e_z^{N,m}\vert-\vert e_z^{0,m}\vert&\le C\left(\varepsilon\vert(\bar{B}_0\cdot e_w^{0,m})\bar{B}_0\vert+\varepsilon^2\tilde{h}+\varepsilon^2\tilde{h}\sum\limits_{n=0}^{N-1}(\vert e_z^{n,m}\vert+\vert e_z^{n+1,m}\vert)\right)\\
&\le C\left(\varepsilon\vert e_{w,\parallel}^{0,m}\vert+\varepsilon^2\tilde{h}+\varepsilon^2\tilde{h}\sum\limits_{n=0}^{N-1}(\vert e_z^{n,m}\vert+\vert e_z^{n+1,m}\vert)\right).
\end{aligned}
\end{equation*}
where 
$$e_{w,\parallel}^{n,m}:=(\bar{B}^{n,m}\cdot e_w^{n,m})\bar{B}^{n,m},\quad \bar{B}^{n,m}:=\frac{B(\varepsilon z(\tau_{n}^m))}{\vert B(\varepsilon z(\tau_{n}^m))\vert}.$$
Due to $e_z^{N,m}=e_z^{0,m+1}$, we have
\begin{equation}\label{r4}
\begin{aligned}
\vert e_z^{0,m+1}\vert-\vert e_z^{0,m}\vert\le C(\varepsilon\vert e_{w,\parallel}^{0,m}\vert+\varepsilon^2(\vert e_z^{0,m}\vert+\vert e_z^{0,m+1}\vert)+\varepsilon^2\tilde{h}).
\end{aligned}
\end{equation}
By taking the inner product on both sides of (\ref{error1}) with the unit vector
$\bar{B}^{n+1,m}$, we can get
$$\vert e_{w,\parallel}^{n+1,m}\vert\le \vert\bar{B}^{n+1,m}\cdot(e^{h\hat{B}(\varepsilon z(\tau_n^m))}e_w^{n,m})\vert+\vert\eta_w^{n,m}+\zeta_w^{n,m}\vert+\vert\xi_w^{n,m}\cdot\bar{B}^{n+1,m}\vert.$$
As a result of $\bar{B}^{n+1,m}=\bar{B}^{n,m}+o(\varepsilon^2\tilde{h})$, we have $$\bar{B}^{n+1,m}\cdot(e^{\tilde{h}\hat{B}(\varepsilon z(\tau_n^m))}e_w^{n,m})=e_{w,\parallel}^{n,m}+o(\varepsilon^2 {\tilde{h}}^2).$$
Furthermore, it concludes
$$\vert e_{w,\parallel}^{n+1,m}\vert-\vert e_{w,\parallel}^{n,m}\vert\le C(\varepsilon\tilde{h}(\vert e_z^{n+1,m}\vert+\vert e_z^{n,m}\vert)+\vert\xi_w^{n,m}\cdot\bar{B}^{n,m}\vert+\varepsilon^2{\tilde{h}}^2).$$
By noting that
$$\left(\varepsilon\int_0^{\tilde{h}} e^{(\tilde{h}-s)\hat{B}(\varepsilon z(\tau_n^m))}E(\tilde{z}^{n,m}(s))ds-\varepsilon\int_0^{\tilde{h}} E(\tilde{z}^{n,m}(s))ds\right)\cdot\bar{B}^{n,m}=0$$ and
\begin{equation*}
\begin{aligned}
&\varepsilon\int_0^{\tilde{h}} E^h(\tilde{z}^{n,m}(s))ds-\varepsilon\tilde{h}E^h(\tilde{z}^{n,m}(\tilde{h}))\\
=&\varepsilon\int_0^{\tilde{h}} \left(E^h(\tilde{z}^{n,m}(\tilde{h}))+\nabla{E^h}(\tilde{z}_{\xi}^{n,m})\cdot(\tilde{z}^{n,m}(s)-\tilde{z}^{n,m}(\tilde{h}))\right)ds-\varepsilon\tilde{h}E^h(\tilde{z}^{n,m}(\tilde{h})),
\end{aligned}
\end{equation*}
we have
\begin{equation}\label{r1}
\begin{aligned}
\vert e_{w,\parallel}^{n+1,m}\vert-\vert e_{w,\parallel}^{n,m}\vert\le C(\varepsilon\tilde{h}(\vert e_z^{n+1,m}\vert+\vert e_z^{n,m}\vert)+\varepsilon^2{\tilde{h}}^2).
\end{aligned}
\end{equation}
Summing up (\ref{r1}) for $n$ from $0$ to $N-1$, it has
\begin{equation*}
\begin{aligned}
\vert e_{w,\parallel}^{0,m+1}\vert-\vert e_{w,\parallel}^{0,m}\vert\le C\left(\varepsilon\tilde{h}\sum\limits_{n=0}^{N-1}(\vert e_z^{n+1,m}\vert+\vert e_z^{n,m}\vert)+\varepsilon^2\tilde{h}\right).
\end{aligned}
\end{equation*}
Together with (\ref{r2}), we can obtain
\begin{equation}\label{r3}
\begin{aligned}
\vert e_{w,\parallel}^{0,m+1}\vert-\vert e_{w,\parallel}^{0,m}\vert\le C(\varepsilon(\vert e_z^{0,m}\vert+\vert e_z^{0,m+1}\vert)+\varepsilon^2\tilde{h}+\varepsilon h_x^k).
\end{aligned}
\end{equation}
Combining (\ref{r4}) and (\ref{r3}), we have
\begin{equation*}
\begin{aligned}
\vert e_{z}^{0,m+1}\vert+\vert e_{w,\parallel}^{0,m+1}\vert-\vert e_{z}^{0,m}\vert-\vert e_{w,\parallel}^{0,m}\vert\le C(\varepsilon(\vert e_z^{0,m}\vert+\vert e_{z}^{0,m}\vert+\vert e_z^{0,m+1}\vert)+\varepsilon^2\tilde{h}+\varepsilon h_x^k).
\end{aligned}
\end{equation*}
By Gronwall's inequality and $e_z^{0,0}=e_{w,\parallel}^{0,0}=0$, we have
\begin{equation*}
\begin{aligned}
\vert e_{z}^{0,m}\vert+\vert e_{w,\parallel}^{0,m}\vert\le C(\varepsilon\tilde{h}+h_x^k).
\end{aligned}
\end{equation*}
For $e_{z}^{n,m}$ and $e_{w,\parallel}^{n,m}$, it can be directly got from (\ref{r2}) and (\ref{r1}).
\end{proof}

For the Hamiltonian splitting method (\ref{HSBX}) of a non-uniform magnetic field, we can similarly show that
\begin{equation*}
\vert z^n-z(\tau_n)\vert+\vert w_{\parallel}^n-w_{\parallel}(\tau_n)\vert\le C(\varepsilon\tilde{h}+\tilde{h}^2+\varepsilon h_x^k),\quad 0\le n\le \frac{T}{\varepsilon\tilde{h}}.
\end{equation*}
It is sufficient to introduce an extra error $\varepsilon\tilde{h}^2$ in (\ref{r2}) due to the corollary \ref{HSBXC}, the proof of which will not be repeated.
On the other hand, by taking $h=\varepsilon\tilde{h}$ we have the following Corollary.
\begin{coro}\label{coroCPD}
Assume that $E^h\in C^{1}(\mathbb{R}^{3})$ and $B\in W^{1,\infty}(\mathbb{R}^{3})$, then there exists a constant $N_0$ independent of $\varepsilon$ such that when the time step $h=\varepsilon\frac{T_0}{N}$ for all $N\ge N_0$, we have
\begin{equation*}
\vert x^n-x(\tau_n)\vert+\vert v_{\parallel}^n-v_{\parallel}(\tau_n)\vert\le C(h+h_x^k), 0\le n\le \frac{T}{h}.
\end{equation*}
\end{coro}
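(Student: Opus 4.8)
The plan is to obtain Corollary~\ref{coroCPD} directly from Theorem~\ref{thCPD} by undoing the time rescaling $\tau = t/\varepsilon$ introduced just before~(\ref{CPD2}). Recall that under this rescaling the original charged-particle dynamics~(\ref{CPD}) and the rescaled long-time system~(\ref{CPD2}) are linked by $z(\tau) = x(t)$ and $w(\tau) = v(t)$ with $t = \varepsilon\tau$. The same identification transfers to the discrete level: the iterates $z^n, w^n$ produced by the scheme~(\ref{SCPD}) on the grid $\tau_n = n\tilde{h}$ are, by construction, exactly the numerical approximations $x^n, v^n$ of the physical trajectory at the times $t_n = \varepsilon\tau_n = n(\varepsilon\tilde{h})$. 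Hence, setting $h := \varepsilon\tilde{h}$, I would identify $x^n = z^n$, $v^n = w^n$, together with $x(t_n) = z(\tau_n)$ and $v(t_n) = w(\tau_n)$.

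First I would make the choice $h = \varepsilon T_0/N$ explicit, so that $\tilde{h} = h/\varepsilon = T_0/N$ is precisely the step size appearing in Theorem~\ref{thCPD}; the hypothesis $N \ge N_0$ with $N_0$ independent of $\varepsilon$ is then inherited verbatim. Next I would check that the parallel velocity component is preserved under the rescaling: since $w(\tau) = v(t)$ and $B(\varepsilon z) = B(\varepsilon x)$, projecting onto $B$ gives $w_{\parallel}(\tau_n) = v_{\parallel}(t_n)$ and $w_{\parallel}^n = v_{\parallel}^n$. With these identifications the quantity $\vert x^n - x(t_n)\vert + \vert v_{\parallel}^n - v_{\parallel}(t_n)\vert$ coincides termwise with $\vert z^n - z(\tau_n)\vert + \vert w_{\parallel}^n - w_{\parallel}(\tau_n)\vert$, to which Theorem~\ref{thCPD} applies and yields the bound $C(\varepsilon\tilde{h} + h_x^k)$. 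Substituting $\varepsilon\tilde{h} = h$ converts this into $C(h + h_x^k)$, while the index range $0 \le n \le T/(\varepsilon\tilde{h})$ becomes $0 \le n \le T/h$, which is exactly the claimed estimate.

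The main point to be careful about, rather than a genuine difficulty, is the bookkeeping of the identification itself: one must confirm that the $x^n, v^n$ in the statement are the relabeled iterates $z^n, w^n$ evaluated at $t_n = \varepsilon\tau_n$, and that the slightly loose notation $x(\tau_n)$ in the estimate is read as the exact physical trajectory $x(t_n)$. Once this dictionary is fixed, the estimate is literally Theorem~\ref{thCPD} rewritten, so no new analysis is required: the delicate ingredients, namely the cancellation of the perpendicular drift over a full period $T_0$ of $e^{\tau\hat{B}(0)}$ that restricts the sharp rate to the parallel velocity component, together with the finite-element contribution through Lemma~\ref{FEM}, were already handled in the proof of Theorem~\ref{thCPD}. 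Thus the corollary is a pure restatement of that theorem in the original, unscaled time variable via $h = \varepsilon\tilde{h}$.
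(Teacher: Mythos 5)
Your proposal is correct and is exactly the paper's own argument: the paper derives Corollary~\ref{coroCPD} from Theorem~\ref{thCPD} with the single remark ``by taking $h=\varepsilon\tilde{h}$,'' i.e., by undoing the time rescaling $\tau=t/\varepsilon$ precisely as you do. Your additional bookkeeping (identifying $x^n=z^n$, $v^n=w^n$, checking the parallel projection transfers, and converting the index range $T/(\varepsilon\tilde{h})$ to $T/h$) just makes explicit what the paper leaves implicit.
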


\noindent{\bf Remark.} We can not have the refined error for $e_w^{n,m}$ because the corresponding $\xi_w^{n,m}$ only has a factor of $\varepsilon$.
However, $\vert\xi_w^{n,m}\cdot\bar{B}^{n,m}\vert=O(\varepsilon^2\tilde{h}^2)$ makes we can turn to consider the error $e_{w,\parallel}^{n,m}$.

\section{Conclusion}

In this paper, we have made a complete analysis for Hamiltonian particle methods of Vlasov--Poisson equations with a nonhomogeneous magnetic field whose strength is controlled by a parameter $\varepsilon$.
We use regularization functions to construct a mollified electric field and further develop a complete description and theory of the particle method.
Firstly, we show that the solution obtained by the particle method is a weak solution to the approximation problem of the original equation.
Based on the initial distribution we can generate a large number of initial macro particles whose trajectories satisfy characteristic equations of the Vlasov--Poisson equations.
The approximate solution of the system can be characterized by the weighted Dirac sum of these macro particles.
And after regularizing the electric field, the approximation problem corresponding to the particle method is equivalent to a zero-size particle model coupled with a smooth electric field, or a finite-size particle model.
By comparing the electric field and the mollified electric field we perform a convergence analysis of the particle method and prove the convergence of the approximate solution under a given class of nonhomogeneous magnetic fields. 
It is theoretically verified that the more particles are used for simulation and the smaller support of the regularization function is employed, the more accurate the solution is obtained by the particle method.

On the other hand, we consider the strong magnetic field with a small parameter $\varepsilon$ to control its strength.
By transforming the time scale, we transform the primal problem into a long time problem.
In this way we analyze the error of the fully discrete method and give the corresponding numerical convergence results.
If the applied magnetic field is homogeneous, we can give error results independent of $\varepsilon$ by truncating the system. 
When the applied magnetic field is nonhomogeneous, we likewise establish the corresponding error theory and verify it by numerical experiments. 
By further dividing the time due to the cyclotron period, with respect to the position and the parallel component of the velocity to the magnetic field we give optimal convergence results.

\backmatter
%
%
%
%
%
\bmhead{Acknowledgments}
%

This research was supported by the National Natural Science Foundation of China (12271513).

\bibliography{sn-bibliography}


\end{document}